\documentclass{article}
\usepackage[letterpaper]{geometry}
\usepackage{graphicx} 
\usepackage{amsmath}
\usepackage{amsfonts} 
\usepackage{amsthm} 
\usepackage{amssymb}

\usepackage{authblk}

\usepackage{subcaption}
\usepackage{dsfont}
\usepackage{hyperref}
\usepackage{xcolor}
\usepackage{bbm}
\usepackage{comment}
\usepackage[normalem]{ulem}
\newtheorem{theorem}{Theorem}[section]
\newtheorem{proposition}{Proposition}[section]
\newtheorem{corollary}{Corollary}[section]
\newtheorem{lemma}{Lemma}[section]
\newtheorem{remark}{Remark}[section]

\newcommand{\Out}{\mu_{\nsim}}

\newcommand{\Ex}{\mathbb{E}}

\newcommand{\LRvertiii}[1]{{\left\vert\kern-0.25ex\left\vert\kern-0.25ex\left\vert #1 
    \right\vert\kern-0.25ex\right\vert\kern-0.25ex\right\vert}}

\newcommand{\Bigvertiii}[1]{\Big\vert\kern-0.25ex\Big\vert\kern-0.25ex\Big\vert #1     \Big\vert\kern-0.25ex\Big\vert\kern-0.25ex\Big\vert}

\newcommand{\vertiii}[1]{\vert\kern-0.25ex\vert\kern-0.25ex\vert #1     \vert\kern-0.25ex\vert\kern-0.25ex\vert}

\title{
First and second order firing statistics 
of large neuronal networks with community structure}

\author{
Thiago H. N. Coelho\textsuperscript{1}, 
Daniel Figueiredo\textsuperscript{1}, 
Giulio Iacobelli\textsuperscript{2}
\\
Guilherme Ost\textsuperscript{3}, 
Conrado C. Pinto\textsuperscript{1}
}

\date{}

\begin{document}

\maketitle

\footnotetext[1]{Computer Sciences and Systems Engineering Department, Universidade Federal do Rio de Janeiro (UFRJ), Brazil.}
\footnotetext[2]{Mathematical Institute, Federal University of Rio de Janeiro (UFRJ), Brazil.}
\footnotetext[3]{IMPA, Brazil.}

\begin{abstract}
We study a discrete-time stochastic model for the spiking activity of a network of $n$ interacting neurons partitioned into $k$ communities. 
The  interaction network is defined on a random directed weighted graph generated by a Stochastic Block Model with $k+1$ edge types ($k$ intra-community, $1$ inter-community). 
Each edge is present independently with a probability that depends only on its type. 
When present, an edge carries a type-dependent deterministic weight scaled by $n^{-1}$, which is excitatory or inhibitory depending on  an  independent random sign.

Conditionally on the  interaction network, the spiking process evolves as a stationary Markov chain. 
Our results describe how the network's community structure manifests itself in 
the first- and second-order statistics of this chain, as $n \to \infty$ with $k$ fixed.
A key step in our analysis is to represent these statistics as explicit functions of the random interaction network. Using these representations, we prove that, with high probability:
 (i) the vectors of mean firing activity and firing  variances are approximated in the supremum norm by deterministic vectors that are constant within each community, with an error of  order $\sqrt{\log(n)/n}$;  (ii) the off-diagonal part of the simultaneous ($0$-lagged) firing covariance matrix is approximated in max norm,  with an error of order $\sqrt{\log(n)/n^3}$, by a deterministic community-block matrix (constant on each block defined by a pair of communities), with the block values determined by the unique solution to a $k \times k$ Stein matrix equation;
(iii) in contrast, the $t$-lagged covariance matrix, for every lag $t \ge 1$, is approximated in max norm, again with an error of order $\sqrt{\log(n)/n^3}$, by a random  matrix
built from two components: a random matrix capturing the interaction network and a deterministic community-block matrix. 
Finally, we discuss conditions on the model parameters under which these limiting statistics enable the identifiability of the underlying community structure.
\end{abstract}

\section{Introduction}
\subsection{Model and problem statement}
\label{sec:model:descr:prob:state}
In this paper, we consider a stochastic, discrete-time model describing the spiking activity of a network of $n$ interacting neurons partitioned into $k$ communities.
The model has two components, which can be informally described as follows. 

The first is a directed weighted graph with $n$ vertices, whose edges can be of $k+1$ types: $k$ {\em intra-community} types, one for each community, and a single {\em inter-community} type. An edge is of intra-community type if it connects two neurons in the same community, and of inter-community type otherwise. 
Each edge in the graph carries a weight whose absolute value is the same for all edges of a given type; we refer to this common value as the {\em intensity} of the edge type.
The sign of the weight, however, 
is independent of the edge type: edges with positive weights are called {\em excitatory}, while those with negative weights are called {\em inhibitory}.
For most of our results, the graph structure and the edge signs are random. Specifically, edges are present independently of one another, with probabilities depending only on their types, as in a {\em Stochastic Block Model} \cite{JMLR:v18:16-480}. Independently of the edge configuration, the edge signs are independent and identically distributed, with each sign being positive with a fixed probability. 
We write $\Theta$ for the matrix indicating which edges are present in the graph and $H$ for the matrix of signs. Hereafter, we call the pair
$(\Theta, H)$ the {\em environment} and denote by $A=(A_{ij})_{i,j\in [n]}$ its associated {\em interaction matrix}. The entry $A_{ij}$ represents  the weight of the edge from neuron $i$ to neuron $j$, scaled by $n^{-1}$, if  the edge is present, and is $0$ otherwise.

The second component is a sequence $X=(X_t)_{t\in\mathbb{N}}$ of random vectors taking values in $\{0,1\}^n$, representing the time evolution of the spikes of the $n$ neurons. Writing $X_{i,t}$ to denote the $i$-th coordinate of the random vector $X_t$, we have $X_{i,t}=1$ if  neuron $i$ spikes at time $t$, and $0$ otherwise. 
Conditionally on the realization of the environment $(\Theta, H)$, the sequence $X$ forms a Markov chain. Specifically,
given the environment and the entire configuration of spikes at time $t-1$, each neuron $i$ spikes independently of the others at the next time step, with probability
\begin{equation}
\label{def:trans:prob:informal}
 \mathbb{P}_{(\Theta,H)}\bigl(X_{i,t}=1 \mid X_{t-1}\bigr)
  = \lambda + \big(A^{\top}X_{t-1}\big)_i ,    
\end{equation}
where $A^{\top}$ denotes the transpose of the matrix $A$ and $\lambda \in [0,1]$ represents the {\em spontaneous firing rate}, which is the same for all neurons. The second term on the right-hand side of \eqref{def:trans:prob:informal} is a weighted sum of the spikes emitted at time $t-1$ by
the neurons that point to neuron $i$ in the graph, where the weights are given by the corresponding entries of the interaction matrix $A$. Since the entries of $A$ are of order $n^{-1}$, the interactions are individually weak.

Throughout our analysis, we work under conditions on the model parameters (see \eqref{def:M} and \eqref{eq:wellposed_random_scenario}) ensuring that the right-hand side of \eqref{def:trans:prob:informal} is always between $0$ and $1$ for a set of realizations of the environment $(\Theta,H)$ whose probability approaches $1$ as $n\to\infty$ (this is formalized in Lemma \ref{lem:random_NFC_welldefined}). We refer to such realizations as {\em admissible} realizations.
Conditionally on admissible realizations of the environment, the Markov chain $X$ admits a unique invariant measure, and we always work with its stationary version. In the sequel, we denote by $\mathbb{P}_{(\Theta,H)}$ the law of the stationary chain $X$ given the environment $(\Theta,H)$ and by $\mathbb{E}_{(\Theta,H)}$ the corresponding expectation.

In what follows, let $\mathbb{E}_{(\Theta,H)}[X_t]$ be the column vector of the mean firing activity at time $t$
and let $\Sigma^{(t)}_{(\Theta,H)}$ denote the $t$-lagged firing covariance matrix that is defined as
\begin{equation*}
\Sigma^{(t)}_{(\Theta,H)}:=\Ex_{(\Theta,H)}[X_t(X_0)^{\top}]-\Ex_{(\Theta,H)}[X_t](\Ex_{(\Theta,H)}[X_0])^{\top}, \ t\in\mathbb{N} \,.
\end{equation*}
By stationarity, the mean vectors $\mathbb{E}_{(\Theta,H)}[X_t]$ do not depend on $t$, and in the rest of the paper we refer to $\mathbb{E}_{(\Theta,H)}[X_0]$ as the vector of mean firing rates. Note that both this vector and the lagged covariance matrices are random, depending on the realization of the environment. 
Also, recall that the law of this environment is determined by the partition of the population into the $k$ communities, the edge probabilities, the intensities of the different edge types, and the probability that an edge is excitatory.

This leads to the following related problems, which we address in this paper.      

{\it Problem statement}. As the size $n$ of the neuronal network
grows while the number $k$ of communities is kept fixed, how does the community structure of the environment manifest itself in the mean firing rates
and in the lagged firing covariance matrices? Conversely, to what extent can the partition of the population into communities be recovered from these quantities alone?

\subsection{
Overview of the main results
}

Throughout the paper, we denote by $\mathcal{C}=\{C_1, \ldots, C_k\}$ the partition of the set $[n]:=\{1,\ldots,n\}$ representing the $k$ communities of the entire network of neurons, whose size is $n$. In our analysis, we assume that all communities have sizes proportional to the total number of neurons in the network, i.e., we suppose that $|C_b|=\alpha_b n\geq 1$ for every $b \in [k]$, for some positive values $\alpha_1,\ldots, \alpha_k$ satisfying $\alpha_1 + \ldots + \alpha_k=1$. In what follows, we write  $\|\cdot\|_{\infty}$ to denote the usual supremum norm in $\mathbb{R}^n$ and $K$ to indicate a positive constant independent of $n$.
 
Recall that $\mathbb{E}_{(\Theta,H)}[X_0]$ denotes the random vector of mean firing rates, where the randomness comes from the random environment $(\Theta,H)$. In our first main result, Theorem \ref{thm:convergence_mean_version2}, we show how the community structure of the environment is encoded in this random vector when $n$ is large. 
Specifically, we show that with high probability
\[
\| \mathbb{E}_{(\Theta,H)}[X_0]-\overline{m}\|_{\infty}\leq K \sqrt{\frac{\log(n)}{n}},
\]    
where $\overline{m}$ is the $n$-dimensional vector defined as
\begin{equation}
\label{def:mean:field:approximation:for:the:mean}
\overline{m}:=\lambda \sum_{b \in [k]}\zeta_b \mathbf{1}_{C_b},
\end{equation}
where $\zeta=(\zeta_1,\ldots,\zeta_k)$ is an explicitly known $k$-dimensional vector (see,  \eqref{def:limit:vector:firing:probabilities}) and $\mathbf{1}_{C_b}$ denotes the $n$-dimensional column vector with entries in $C_b$ equal to $1$ and $0$ otherwise. 
Note that all entries of $\overline{m}$ corresponding to neurons in community $C_b$ share the same value $\lambda\zeta_b$, that is, $\overline{m}$ is constant within each community.  The vector $\zeta$ can be explicitly computed from the $k\times k$ matrix $W$ defined in \eqref{def:matrix_G} whose entry $W_{a b}$ corresponds to the mean cumulative effect that community $C_b$ exerts on  community $C_a$.

The proof of Theorem \ref{thm:convergence_mean_version2} can be divided into three steps. In the first step, we prove that for all admissible realizations of the environment, we can write 
\[
\Ex_{(\Theta,H)}[X_0] = (I_n-A^{\top})^{-1}\lambda\mathbf{1}_{n}, \]
where $I_n$ is the $n$-dimensional identify matrix and $\mathbf{1}_{n}$ is the $n$-dimensional column vector with all entries equal to $1$. This is a consequence of Proposition \ref{prop:mean} stated in Section \ref{sec:preliminary:results}.

The second step consists in quantifying the error of approximating the random vector $\Ex_{(\Theta,H)}[X_0]$ by the deterministic vector $(I_n-\Ex^{(n)}[A^{\top}])^{-1}\lambda\mathbf{1}_{n}$, where $\Ex^{(n)}$ denotes the expectation taken with respect to the randomness of the environment $(\Theta,H)$. This follows from Lemma \ref{lem:1} of Appendix \ref{sec:appendix:asymp:app:mean:variance}
and Hoeffding's inequality.
The final step is then to show that $(I_n-\Ex^{(n)}[A^{\top}])^{-1}\lambda\mathbf{1}_{n}=\overline{m}$ inherits the block structure of the environment $(\Theta,H)$. 
Lemma \ref{lem:EAt:power:1Ca} of Appendix \ref{sec:mean:A:and:L} establishes the main result needed in the proof of this final step. 

As a simple consequence of Theorem \ref{thm:convergence_mean_version2}, we 
show that the vector of firing variances, defined as
\[
v_{(\Theta,H)}(X_0):=(\mathbb{E}_{(\Theta,H)}[X_{1,0}](1-\mathbb{E}_{(\Theta,H)}[X_{1,0}]),\ldots, \mathbb{E}_{(\Theta,H)}[X_{n,0}](1-\mathbb{E}_{(\Theta,H)}[X_{n,0}]))^{\top}, 
\]
satisfies with high probability
\[
\big\| v_{(\Theta,H)}(X_0)-\overline{v}\big\|_{\infty}\leq K\sqrt{\frac{\log(n)}{n}}\,,
\]
where the $n$-dimensional vector 
\begin{equation}
\label{def:mean:field:approx:for:variance}   
\overline{v}:=(\overline{m}_1(1-\overline{m}_1),\ldots, \overline{m}_n(1-\overline{m}_n))^{\top}
\end{equation}
is also constant
within each community (see Corollary \ref{cor:convergence_variance}).

Next, we describe how the community structure of the environment emerges in the lagged firing covariances $\Sigma^{(t)}_{(\Theta,H)}$ as $n\to\infty$. We start describing what happens for the simultaneous firing covariance, that is for $t=0$. In this case, since the diagonal terms of $\Sigma^{(0)}_{(\Theta,H)}$ are already known (they are given by the vector of firing variances $v_{(\Theta,H)}(X_0)$), we only need to focus on the behavior of its off-diagonal entries. 
The asymptotic behavior of these entries is described by means of a $k\times k$ matrix $\Psi$ that is the unique solution of the Stein matrix equation  
\begin{equation*}
\Psi-W\Psi W^{\top}=WDW^{\top}, 
\end{equation*}
where $D$ is a diagonal matrix whose non-null entries are expressed in terms of the vector $\zeta$ defined in \eqref{def:limit:vector:firing:probabilities} 
and the parameters $\alpha_b$ (see \eqref{def:stein:equation}). Specifically, introducing the $n\times n$ matrix 
\begin{equation}
\label{def:mean:field:approximation:for:the:simul:cov}
\overline{\Sigma}^{(0)}:=\frac{1}{n}\sum_{a,b\in[k]}\Psi_{ab}\,\mathbf{1}_{C_a}\mathbf{1}_{C_b}^{\top},
\end{equation}
in Theorem \ref{thm:convergence_cov} we establish that with high probability   
\[
\max_{i\neq j} \Big|\Sigma_{(\Theta,H),ij}^{(0)}-\overline{\Sigma}^{(0)}_{ij}\Big|
\leq K\sqrt{\frac{\log(n)}{n^3}}.
\]
We see immediately from \eqref{def:mean:field:approximation:for:the:simul:cov} that the matrix $\overline{\Sigma}^{(0)}$ has a block structure: the entries $\overline{\Sigma}^{(0)}_{ij}$ are equal to $n^{-1}\Psi_{ab}$ for all pairs $(i,j)\in C_a\times C_b$. In particular, this implies that asymptotically as $n\to\infty$ the off-diagonal entries of simultaneous firing covariance are of order $n^{-1}$, whereas its diagonal entries are of
order $1$ 
(see Corollary \ref{cor:convergence_variance}).

At the high level, the proof of Theorem \ref{thm:convergence_cov} is similar to that of Theorem \ref{thm:convergence_mean_version2}, except that we do not work directly with the simultaneous firing covariance $\Sigma^{(0)}_{(\Theta,H)}$. Instead, we work with its vectorized form. This means that we work with the $n^2$-dimensional column vector obtained by stacking the columns of $\Sigma^{(0)}_{(\Theta,H)}$ one by one, denoted by ${\rm vec}\big( \Sigma^{(0)}_{(\Theta,H)} \big)$. Using this representation, to prove Theorem \ref{thm:convergence_cov}, we first show, by relying on Proposition \ref{prop:cov:vec:form}, that for all admissible realizations of the environment 
\[
{\rm vec}\big( \Sigma^{(0)}_{(\Theta,H)} \big)=(I_{n^2}-L)^{-1}{\rm vec}\big({\rm Diag}(v_{(\Theta,H)}(X_0))),
\]
where ${\rm vec}\big({\rm Diag}(v_{(\Theta,H)}(X_0)))$ denotes the vectorized diagonal matrix whose non-null entries are given by the vector of firing variances $v_{(\Theta,H)}(X_0)$ and $L$ is a $n^{2} \times n^{2}$ matrix whose entries are defined as: 
\begin{equation*}
    L_{u v} = \begin{cases}
       0 & \text{ if $u=n(i-1)+i$, for $i \in [n]$ }
       \\
        (A^{\top} \otimes  A^{\top})_{u v} & \text{ otherwise}
    \end{cases}
    \,.
\end{equation*}
To conclude, we estimate the error of approximating the random vector ${\rm vec}\big( \Sigma^{(0)}_{(\Theta,H)} \big)$ by the deterministic vector $(I_{n^2}-\Ex^{(n)}[L])^{-1}{\rm vec}\big({\rm Diag}(\overline{v})\big)$ and then show in Proposition \ref{prop:gen:Psi} stated in Appendix \ref{sec:asym:approx:syncronous:cov:matrix} that this latter vector is, in turn, sufficiently close to the vectorized version of the matrix $\overline{\Sigma}^{(0)}$.  

Unlike the previous approximations, the behavior of the $t$-lagged firing covariance matrices for every
$t\geq 1$ is described by a random matrix denoted by $\overline{\Sigma}^{(t)}_{(\Theta,H)}$, which is defined as
\begin{equation}
\label{def:gen:Gamma:t}
\overline{\Sigma}^{(t)}_{(\Theta,H)}:=\big(A^{\top}_{(\Theta,H)}\big)^{t}\,{\rm diag}(\overline{v})
+\frac{1}{n}\sum_{a,b\in[k]}\big(W^{t}\Psi\big)_{ab}\,\mathbf{1}_{C_a}\mathbf{1}_{C_b}^{\top}\,,
\end{equation}
with $\overline{v}$ and $W$ given by \eqref{def:mean:field:approx:for:variance} and \eqref{def:matrix_G}, respectively, and $\Psi$ as above (see \eqref{def:Psi} for a rigorous definition of $\Psi$).  This is the content of Theorem \ref{thm:gen:convergence_cov_lag}, where we show that with high probability
\[
\max_{i,j} \Big|\Sigma_{(\Theta,H),ij}^{(t)}-\overline{\Sigma}^{(t)}_{ij}\Big|
\leq K\sqrt{\frac{\log(n)}{n^3}}.
\]
The proof idea of Theorem \ref{thm:gen:convergence_cov_lag} is as follows. We first use the identity
$\Sigma^{(t)}_{(\Theta,H)}=(A^{\top}_{(\Theta,H)})^{t}\Sigma^{(0)}_{(\Theta,H)}$ which holds for all admissible realization of the environment by Proposition~\ref{prop:t-laggedCov}. Then, we decompose $\Sigma^{(0)}_{(\Theta,H)}$  as the sum of its diagonal and off-diagonal terms and apply Corollary \ref{cor:convergence_variance} and Theorem \ref{thm:convergence_cov} to these terms, respectively. In the final step, we quantify the error of replacing $(A^{\top}_{(\Theta,H)})^{t}$ by its expectation in front of the second term.

The above results describe how the first and second order statistics of the firing process $X$ inherits the community structure of the environment, as the number of neurons $n\to\infty$. 
In Subsection \ref{sec:detection}, we further investigate the extent to which the community structure can be recovered from these limiting statistics by discussing conditions on the model parameters under which the communities are identifiable. A thorough investigation of community detection in the class of models considered here, including the development and analysis of detection procedures, is left for future work.
%

\subsection{Related work}

The neuron-firing model considered in this paper is a variant of the one recently studied in \cite{Chevallier:2026:Inferring,chevallier2024community}. Unlike our model, in these papers the underlying random graph model is homogeneous, being distributed as a directed Erd\H{o}s-Rényi model with $n$ vertices and edge probability $p\in [0,1]$, a special case of the model considered here. In addition, all edges carry the same intensity, whereas here the intensity depends on the type of the edge.
Similarly to our model, the weight of each edge has a sign, though the assignment rule is different. Specifically, the edge from neuron $j$ to neuron $i$ has positive sign if neuron $j$ belongs to a set ${\cal P}_+$ and negative sign otherwise. In their framework, the sets ${\cal P}_+$ and ${\cal P}_-:=({\cal P}_+)^{c}$  are interpreted as the sets of excitatory and inhibitory neurons, respectively.

In \cite{Chevallier:2026:Inferring}, the authors show that it is possible to estimate the edge probability $p$ by observing $T$ consecutive configurations of spikes of the $n$ neurons, with rate $n^{-1/2}+n^{1/2}/T+(\log(T)/T)^{1/2}$. In particular, in the asymptotic regime in which $n$ grows and $T$ scales with $n$, the proposed estimator is proved to be consistent provided that $n^{1/2}/T\to 0$ as $n\to\infty$. In the same setting, the paper \cite{chevallier2024community} addresses the question of discriminating the sets of excitatory neurons from the inhibitory ones. 
In their main result, the authors show that this can be done by using simple aggregated and spectral methods for which the proportion of misclassified neurons converges to $0$ in probability, provided that $n/T\to 0$ (up to log factors) as $n\to\infty$. Moreover, this condition is proved to be near-optimal in the minimax sense. The methods developed in both papers rely on first and second order
statistics of the spiking activity generated by a homogeneous random graph. Here we go beyond the homogeneity assumption, by characterizing these statistics when the underlying random graph model has a block structure, in both its edge probabilities and its intensities.

The problems stated in Section \ref{sec:model:descr:prob:state} concern the relation between the architecture of a
neuronal network (which characterizes the neuronal interactions) and its firing patterns. In experimental data, we often have access to the firing patterns of large networks of neurons, while their architecture is unobserved. 
Estimating these architectures from data has attracted considerable attention in recent years. 
This has been done in the context of interacting chains with variable-length memory in \cite{duarte2019estimating}, where 
the architecture of the network is described in terms of interaction neighborhoods and explicit bounds on the probabilities of under- and overestimating these neighborhoods are
provided. In a more general context of sparse space-time models, \cite{ost2020sparse}
obtains oracle inequalities for Lasso-type estimators of the model interactions, valid even when only part of the network is observed.

From a more applied perspective, a number of works approaches the same problem
through generalized linear models (GLMs) fitted to recorded spike trains. For
example, the work \cite{kobayashi2019reconstructing} applies a GLM to the
cross-correlogram of spike train pairs, together with a thresholding procedure to
determine which inferred connections are statistically reliable, and recovers
both the weighted adjacencies of the neuronal network and the excitatory or
inhibitory character of each neuron. 
To achieve optimal performance the
parameters of the model have to be carefully tuned for each individual dataset. 
This obstacle is addressed in the subsequent work \cite{endo2021convolutional}, where a convolutional neural network trained on
simulated data replaces the GLM fitting. This removes the need for fine tuning model parameters, but the interpretation of
results is less clear. As another example, let us mention \cite{linderman2016bayesian} that
proposes a Bayesian framework to uncover latent structures in neural circuits,
such as the spatial arrangement of neurons and their functional types, combining
a GLM for spike generation with graph-theoretical priors governing the
relationship between latent features and neural connectivity patterns. 

The GLM-based methods above are assessed on synthetic and empirical data rather than being supported  by theoretical guarantees. We note that the cross-correlograms on which several of them are based correspond, to some extent, to lagged covariances of fixed pair of neurons, viewed as a function of the lag. The asymptotic description of these covariances provided in Theorems \ref{thm:convergence_cov} and \ref{thm:gen:convergence_cov_lag}  may, therefore, be an important step towards a rigorous account of what such methods can recover.

In the theoretical neuroscience literature, the relation between architecture and activity has also been studied in the opposite direction, as for example in the works \cite{ostojic2009connectivity} and \cite{pernice2011structure}, where the goal is to quantify correlations between the activity of pairs of neurons for different stochastic models and network architectures. 
In \cite{ostojic2009connectivity}, this is done for a pair of integrate-and-fire neurons, the surrounding network being represented by a stationary Gaussian input.
In \cite{pernice2011structure}, neurons are modeled by Hawkes processes, a class of interacting point processes widely used in neuroscience. For this model, the authors obtain closed expressions for the covariances integrated over the time lag, under the assumption that the network is regular, i.e., all neurons have the same in- and out-degrees. They also assess through simulations the accuracy of these expressions when the network is no longer regular.  

We conclude this section by briefly mentioning that stochastic models having a latent block structure have been recently considered in \cite{Berthet2019Exact,Bunea2020Model, peixoto2019network}. 
In the works \cite{Berthet2019Exact} and \cite{Bunea2020Model}, the observations are assumed to be i.i.d. copies of a random vector. 
In the former, the observed random vector is distributed as the Ising model without external field where the coupling weights have a block structure, while in the latter the structure
is imposed on the covariance matrix of the observed vector. In \cite{peixoto2019network}, the network and its communities are reconstructed
jointly from time series by a Bayesian procedure, without rigorously discussing guarantees of recovery.
In our setting, the observations form a single trajectory of a dependent process,
whose dynamics is Markovian conditionally on the environment. Since the associated transition probabilities depend on the environment that has different communities, one expects these communities to leave a trace in the lagged covariances; our Theorems \ref{thm:convergence_cov} and \ref{thm:gen:convergence_cov_lag} make
this precise.

\subsection{Outline of the paper}

The rest of the paper is organized as follows. In Section~\ref{sec:model}, we introduce the
neuron-firing model with communities on a fixed interaction graph and fix most of the notation used throughout the paper. Section~\ref{sec:preliminary:results} collects three
auxiliary results showing that the first and second order statistics of the firing activity can be computed exactly from the interaction matrix, for every choice of the interaction graph. In Section~\ref{sec:NFC_random}, we extend our neuron-firing
model to a random environment distributed as a stochastic block model. Our main
results are stated in Section~\ref{sec:main:proba:results}. In Section~\ref{sec:detection}, we
derive from these results the consequences for the identifiability of the
communities. 
All proofs are deferred to the appendices.

\section{Neurons firing model with communities (NFC)} \label{sec:model}

In Section~\ref{sec:NFC}, we introduce our neuron-firing model with  communities where the interactions between neurons are fixed, followed by a few preliminary results on this model, presented in Section \ref{sec:preliminary:results}.


\subsection{NFC on a fixed environment}\label{sec:NFC}
Consider integers $n\geq k\geq 1$ and let $\left[ n \right]$ denote the set of neurons and let $\mathcal{C}=\{C_1, \ldots, C_k\}$ be a partition of $[n]$ representing $k$ different communities of neurons. 
Let $\theta=(\theta_{ij})_{1\leq i,j\leq n}$ be a $\{0,1\}$ matrix encoding the pairwise (directed) interaction between neurons; $\theta_{ij}=0$ means that neuron $i$ does not interact directly with neuron $j$. Note that the matrix $\theta$ need not be symmetric which allows to have, e.g.,  $\theta_{ij}=0$ and $\theta_{ji}=1$; that is the matrix $\theta$ can be interpreted as a directed graph. 
There are $k+1$ types of edges: $k$ {\em intra-community} and $1$ {\em inter-community}, depending on whether they connect neurons in a same community among the $k$ possible ones or not, respectively. Given two neurons $i, j$, we shall write $i \overset{b}{\sim} j$ to denote that $i$ and $j$ both belong to the community $C_b$ and $i \nsim j$ to denote that they belong to different communities.  As mentioned in the introduction, the intensity of the interaction may differ depending on the edge type.  Each interaction in $\theta$ can be {\em inhibitory} or {\em excitatory} and this is encoded in a matrix $\eta=(\eta_{ij})_{1\leq i,j\leq n}$, with $\{-1,1\}$ entries. Let  $G=(\mathcal{C}, \theta, \eta)$ be a directed weighted graph with $n$ vertices and $k$ communities summarizing the network structure introduced above. Henceforth, we shall refer to $G$ as the {\em interaction graph}. 

The neurons' activity is modeled via a system of $n$ interacting $\{0,1\}$-valued random variables $\{X_{i,t}, i \in [n], t \in \{0\} \cup \mathbb{Z}^+\}$, representing whether a neuron fires or not at a given time; $X_{i,t}=1$ means that neuron $i$ fires at time $t$ and $X_{i,t}=0$ that it does not fire. These random variables are coupled via the interaction graph $G=(\mathcal{C}, \theta, \eta)$ as detailed below. Denoting by  
 $X_t=(X_{1,t}, \ldots, X_{n,t})$ the
configuration of the system of $n$ neurons at time $t$, the  process $X=(X_t)_{t\geq 0}$  evolves as a discrete time  Markov chain on the state space $\{0,1\}^n$ in which the conditional distribution of $X_t$ given that $X_{t-1} = x$, $x\in \{0,1\}^n$,  is that of $n$ independent Bernoulli random variables with parameters $\phi_{G,i}(x)$, $i \in [n]$, i.e., for all  $x,y \in \{0,1\}^n$ and $t\geq 1,$
\begin{align}\label{eq:cond_ind}
\mathbb{P}_G (X_t = y \mid X_{t-1}=x)=
\prod_{i \in [n]} \left(\phi_{G,i}(x) \right)^{y_i}\left( 1- \phi_{G,i}(x) \right)^{1-y_i}\,,    
\end{align}
where, given $x=(x_1, \ldots, x_N) \in \{0,1\}^n$, 
\begin{align}\label{eq:firingProb}
\phi_{G,i}(x) &:= \lambda + \sum_{b \in [k]} \mathbbm{1}_{\{i \in C_b\}}\Big(\frac{\mu_b}{n}\sum_{j: j\overset{b}{\sim}i}  \theta_{ji} \eta_{ji} \, x_{j} + \frac{\Out}{n} \sum_{j: j \nsim i} \theta_{ji} \eta_{ji} \,  x_{j}\Big)\,.
\end{align}

Equation~\eqref{eq:firingProb} may be interpreted as the probability of neuron $i$ firing at a given time given the system's configuration at the preceding time.  
The first inner summation covers neurons in the same community of $i$  while the second covers neurons in the other communities. The parameter $\lambda \in [0,1]$ represents the spontaneous activity. The parameters $(\mu_b)_{b\in [k]}$ and  $\Out$, all positive, codify the intensity of the interaction between neurons in the same community and in different ones, respectively. A neuron $j$  interacting directly with $i$ (i.e., $\theta_{ji}=1$) may either {\em excite} ($\eta_{ji}=1$) or  {\em inhibit} ($\eta_{ji}=-1$) neuron $i$'s activity, increasing or decreasing its firing probability by an amount of $\mu_b/n$ (resp., $\Out/n$) if  $j\overset{b}{\sim} i$   (resp., $j \nsim i$).  

It will be convenient to write \eqref{eq:firingProb} in a matrix form. Thus, given the set of neurons $[n]$, the $k$ communities $\mathcal{C}$ and the interaction graph $G=(\mathcal{C}, \theta, \eta)$, we introduce the corresponding  {\em interaction matrix}  $A = \left( A_{ij} \right)_{i,j\in [n]}$ whose  entries are defined as (to avoid clutter in the notation we omit the dependence on $G$),
 
 \begin{equation}\label{eq:matrixA}
A_{ij} :=   \sum_{b\in [k]}  \frac{\mu_b}{n} \theta_{ij} \eta_{ij} \mathbbm{1}_{\{i\overset{b}{\sim} j\}} +   \frac{\Out}{n} \theta_{ij} \eta_{ij} \mathbbm{1}_{\{i\nsim j\}}\,. 
\end{equation}
Once defined the interaction matrix $A$, for every $x \in \{0,1\}^n$ and every $i \in [n]$,  the firing probability $\phi_{G,i}(x)$ can be rewritten as 
\[
\phi_{G,i}(x) = \lambda + \big( A^{\top} x \big)_i\,.  
\]

Note that, for \eqref{eq:cond_ind} to be well defined, we should have that $\phi_{i,G}(x) \in [0,1]$ for every $i \in [n]$ and every $x \in \{0,1\}^n$. This induces some restriction on the model parameters $\lambda, (\mu_b)_{b\in [k]}, \Out$ and the interaction graph $G$. A sufficient condition is:
\begin{equation}\label{eq:wellposed}
\vertiii{A^\top}_{\infty}
< \lambda\wedge (1-\lambda)\,, 
\end{equation}
where $\vertiii{A^\top}_{\infty}:=\max_i \sum_{j \in [n]}|A^{\top}_{ij}|$ and, hereafter,  $u\wedge v$ (resp.,  $u\vee v$) denotes the minimum (resp., maximum) value between the real numbers $u$ and $v$. 

Note that $\vertiii{A^\top}_{\infty}=  \frac{1}{n}\max_{b \in [k]} \mu_b\Delta_b + \Out \frac{\Delta_{\nsim}}{n}$, where $\Delta_{b}=\max_{i\in C_b} \sum_{j:j\overset{b}{\sim} i}\theta_{ji}$ is the maximal intra degree within community $C_b$ and $\Delta_{\nsim}= \max_i\sum_{j:j\nsim i}\theta_{ji}$ is the  maximal inter degree. Note also that, for \eqref{eq:wellposed} to be satisfied, it must necessarily be the case that $\vertiii{A^\top}_{\infty}{<1/2}$. 

Throughout the paper we will work with a stationary version of the chain $X$, which in particular implies that $\mathbb{P}_G(X_t=x)=\mathbb{P}_G(X_0=x)$ for all $t$ and every $x\in \{0,1\}^n$, as well as $\mathbb{E}_G[X_{i,t}X_{j,t}]=\mathbb{E}_G[X_{i,0}X_{j,0}]$ for all $t$ and $i,j \in [n]$.
In passing, let us point out that if  \eqref{eq:wellposed} holds, then $0<\phi_{i,G}(x)<1$ for every $i \in [n]$ and every $x \in \{0,1\}^n$. The latter implies that the Markov chain is irreducible and aperiodic which, in turn, assures the existence of a unique stationary measure. 

\medskip
{\it General notation.} Before concluding this section, let us introduce some general notation used in the sequel. We write $\mathbf{1}_{n}$ to indicate the column vector of length $n$ with all entries equal to $1$. More generally, 
For any subset $S\subset [n]$, we denote $\mathbf{1}_{S}$ the column vector of length $n$ with entries in $S$ equal to $1$ and $0$ otherwise.
For a $n$-dimensional vector $w$, we denote $||w||_{\infty}:=\max_{i\in [n]}|w_i|$ to indicate its supremum norm. 
 For a $n^2$-dimensional column vector $w$, we will often write $w_{ij}$ to indicate its entry $w_u$, where $u=n(j-1)+i$ with $i,j\in [n]$.
Given $n$-dimensional column vectors $w$ and $v$, we write
$w\odot v$ to denote their Hadamard product. This is an $n$-dimensional vector whose entries $(w\odot v)_i:=w_i v_i$, for all $i\in [n]$.

We denote $I_n$ the $n$-dimensional identity matrix.
Given an $n$-dimensional vector $w$, we denote by ${\rm diag}(w)$ the $n\times n$ diagonal matrix with diagonal entries given by ${\rm diag}(w)_{ii}=w_i$,  $i \in [n]$. If $\bar{w}$ is another $n$-dimensional vector, we denote $w(\bar{w})^{\top}$ the $n\times n$ matrix whose entries are given by $w(\bar{w})^{\top}_{ij}=w_i\bar{w}_j$, $i,j\in [n]$. 
Given an $n\times n$ matrix $B$   we denote by $d_0(B)$ the matrix obtained by setting to zero all diagonal elements of $B$ and keeping the off-diagonal elements unaltered. We denote by ${\rm Diag}(B)=B-d_0(B)$, i.e., the diagonal matrix obtained by setting to zero all off-diagonal elements of $B$ and keeping the diagonal elements of $B$ unaltered.
%
We also denote by ${\rm vec}(B)$ its vectorization, that is  the $n^2$-dimensional column vector obtained by stacking the columns of $B$ one by one. Specifically, for $u \in\{1, \ldots, n^2\}$, ${\rm vec}(B)_u = B_{ij}$, with $i,j \in [n]$ uniquely determined by  $u=n(j-1)+i$. 
With slight abuse of notation, we shall often write $\text{vec}(B)_{ij}$ instead of ${\rm vec}(B)_u$ to refer to the entry $u=n(j-1)+i$ of the vector ${\rm vec}(B)$.
Also, when $B$ is such that $I_n-B$ is invertible, we denote ${\cal G}(B)=(I_n-B)^{-1}$ and ${\cal G}_{\geq 1}(B)=(I_n-B)^{-1}-I_n$. If $I_n-B$ is invertible it holds that ${\cal G}(B)=\sum_{u=0}^{\infty}B^u$ and ${\cal G}_{\geq 1}(B)=\sum_{u=1}^{\infty}B^u$ (see, e.g.,  \cite[Theorem 6.2.8]{Horn1991:topics}). 
Moreover, given two $n\times n$ matrices $B$ and $C$, we denote by $B \otimes C$ their Kronecker product, that is the $n^2\times n^2$ matrix  whose entries are given by $(B \otimes C)_{uv}=B_{j\ell}C_{ih}$, where $h, i, j, \ell \in [n]$ are uniquely determined by $u=n(j-1)+i$ and $v=n(\ell-1)+h$. 
Similar as above, we may also write $(B \otimes C)_{ij,hl}$ instead of $(B \otimes C)_{uv}$ to refer to the entry $(u,v)=(n(j-1)+i,n(\ell-1)+h)$ of the matrix $B \otimes C$. 
Let us recall for later use that $(B\otimes C)(D\otimes E)=(BD\otimes CE)$ for all $n\times n$ matrices $B,C,D$ and $E$. Applying this identity iteratively we deduce that $(B\otimes C)^{u}=(B)^u\otimes (C)^u$ for all $u\geq 0$. 
Recall also that  the Kronecker product and the vectorization are related by the following property: given three matrices $B, C, D$ it holds that 
${\rm vec}(B D C)= (C^{\top}\otimes B){\rm vec}(D)$. 
Finally, the max norm of a matrix $B$ is denoted by $\vertiii{B}_{max}:=\max_{i,j}|B_{ij}|$,  whereas, as already mentioned, the sup norm $\vertiii{B}_{\infty}:=\max_i \sum_{j \in [n]}|B_{ij}|$.  

\subsection{Auxiliary results}
\label{sec:preliminary:results}

In this section, we collect three auxiliary results that collectively show that for the Markov chain model defined in the previous section, first and second order statistics can be explicitly computed from the interaction matrix $A$, for every choice of the interaction graph $G$.
Their proofs are presented in Appendix \ref{sec:appendix:proof:priliminary:results}. 

In the first auxiliary result, we show that the vector of the mean firing activity $\Ex_G[X_0]$ can be explicitly computed from the interaction matrix $A$ and the parameter $\lambda$. 

\begin{proposition}[Firing mean] \label{prop:mean}
Let $[n]$ be the set of neurons, $\mathcal{C}=\{C_1, \ldots,  C_k\}$  the $k$ communities, $G=(\mathcal{C}, \theta, \eta)$ the interaction graph and $\lambda\in[0,1]$ the spontaneous activity. Assume that the corresponding interaction  matrix $A$ and $\lambda$ satisfy~\eqref{eq:wellposed}. 
 Then,
 the  firing mean $\Ex_G[X_0]$ is given by
\[
\Ex_G[X_0] = {\cal G}(A^{\top})\lambda\mathbf{1}_{n}\,.
\]
\end{proposition}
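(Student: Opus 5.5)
The plan is to use stationarity together with the tower property. By \eqref{eq:cond_ind}, conditionally on $X_{t-1}$ each coordinate $X_{i,t}$ is Bernoulli with parameter $\phi_{G,i}(X_{t-1}) = \lambda + (A^{\top}X_{t-1})_i$. This parameter lies in $(0,1)$ under \eqref{eq:wellposed}, so the kernel is a genuine Markov kernel and the stationary law exists and is unique, as noted after \eqref{eq:wellposed}. Taking expectations gives, for the stationary chain and every $t\geq 1$,
\[
\Ex_G[X_t] = \Ex_G\big[\Ex_G[X_t\mid X_{t-1}]\big] = \lambda\mathbf{1}_n + A^{\top}\Ex_G[X_{t-1}],
\]
by linearity of the conditional mean in $X_{t-1}$. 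Stationarity gives $\Ex_G[X_t]=\Ex_G[X_{t-1}]=\Ex_G[X_0]=:m$. Hence $m$ solves the linear fixed-point equation $(I_n - A^{\top})m = \lambda\mathbf{1}_n$.

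It remains to show that $I_n-A^{\top}$ is invertible, so that $m = (I_n-A^{\top})^{-1}\lambda\mathbf{1}_n = {\cal G}(A^{\top})\lambda\mathbf{1}_n$. Condition \eqref{eq:wellposed} gives $\vertiii{A^{\top}}_{\infty} < \lambda\wedge(1-\lambda)\le 1/2<1$. Since $\vertiii{\cdot}_{\infty}$ is the operator norm induced by $\|\cdot\|_\infty$, the spectral radius of $A^{\top}$ is at most $\vertiii{A^{\top}}_{\infty}<1$. Therefore $1$ is not an eigenvalue of $A^{\top}$, and $I_n-A^{\top}$ is invertible. Moreover, the Neumann series $\sum_{u\ge0}(A^{\top})^u$ converges to ${\cal G}(A^{\top})$, consistent with the notation introduced earlier (cf. \cite[Theorem 6.2.8]{Horn1991:topics}).

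An alternative route avoids invoking the inverse directly. Iterate the recursion from an arbitrary initial law, $\Ex[X_t]=\sum_{u=0}^{t-1}(A^{\top})^u\lambda\mathbf{1}_n + (A^{\top})^t\Ex[X_0]$, and let $t\to\infty$. The second term vanishes geometrically in sup norm because $\vertiii{(A^{\top})^t}_\infty\le \vertiii{A^{\top}}_\infty^t$ and $\|\Ex[X_0]\|_\infty\le 1$. In the stationary case the left side is constant, which yields the same identity.

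There is no real obstacle here. The only point requiring care is the justification that the conditional expectation is exactly affine in $X_{t-1}$, which needs $\phi_{G,i}\in[0,1]$ so that the Bernoulli description in \eqref{eq:cond_ind} is valid. Together with the invertibility step, this is precisely what \eqref{eq:wellposed} provides.
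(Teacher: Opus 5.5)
Your proposal is correct and follows the paper's proof: condition on $X_{t-1}$, use that the firing probability is affine in $X_{t-1}$, and invoke stationarity to get $(I_n-A^{\top})m=\lambda\mathbf{1}_n$, then invert using \eqref{eq:wellposed}. Your spectral-radius argument for invertibility, and the optional Neumann-iteration variant, only spell out a step the paper asserts without detail.
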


In what follows, we write $\Sigma^{(t)}_G$ to denote the $t$-lagged firing covariance matrix that is defined as
\begin{equation}
\label{def:synchronous:cov}
\Sigma^{(t)}_G:=\Ex_G[X_t(X_0)^{\top}]-\Ex_G[X_t](\Ex_G[X_0])^{\top}, t\in\mathbb{N} \,.
\end{equation}
Along the text, we often refer to the $0$-lagged firing covariance matrix $\Sigma^{(0)}_G$ as the synchronous firing covariance matrix.

The second auxiliary result 
tells us that the synchronous firing covariance matrix $\Sigma^{(0)}_G$
can be computed explicitly from the interaction matrix $A$ and the vector of firing variances, defined as
\begin{equation}
\label{def:vector:variances}
v_G(X_0):={\Ex_G[X_{0}]\odot (\mathbf{1}_n-\Ex_G[X_{0}])}\,.   
\end{equation}
Observe that the vector $v_G(X_0)$ can be computed from Proposition \ref{prop:mean}.

\begin{proposition}\label{prop:cov:vec:form}
Assume that the interaction  matrix $A$ 
and the parameter $\lambda$ satisfy~\eqref{eq:wellposed}. Then,     
the synchronous firing covariance matrix $\Sigma^{(0)}_G$ 
can be written in a vectorized form as
\begin{equation}\label{eq:VecEquation}   
 {\rm vec}\big( \Sigma^{(0)}_G \big) ={\cal G}(L){\rm vec}\big( {\rm diag}(v_G(X_0))\big)
 \,,
\end{equation}  
where $v_G(X_0)$ is the vector of the firing variances defined in \eqref{def:vector:variances} 
and $L$ is a $n^{2} \times n^{2}$ matrix whose entries are defined as: 
\begin{equation*}
    L_{u v} = \begin{cases}
       0 & \text{ if $u=n(i-1)+i$, for $i \in [n]$ }
       \\
        (A^{\top} \otimes  A^{\top})_{u v} & \text{ otherwise}
    \end{cases}
    \,.
\end{equation*}
\end{proposition}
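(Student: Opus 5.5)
We derive a fixed-point equation for $\Sigma^{(0)}_G$ from stationarity and the conditional independence built into \eqref{eq:cond_ind}, and then invert it. Write $m:=\Ex_G[X_0]$. By Proposition \ref{prop:mean}, $m=\lambda\mathbf{1}_n+A^{\top}m$. Condition on $X_0$ and use that, given $X_0=x$, the coordinates of $X_1$ are independent Bernoulli with means $\phi_{G,i}(x)=\lambda+(A^\top x)_i$.

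For $i\neq j$, conditional independence gives
\[
\Ex_G[X_{i,1}X_{j,1}\mid X_0]=\phi_{G,i}(X_0)\,\phi_{G,j}(X_0).
\]
For $i=j$ we have instead $\Ex_G[X_{i,1}^2\mid X_0]=\phi_{G,i}(X_0)$, since $X_{i,1}\in\{0,1\}$. Taking expectations and subtracting $m_im_j$, stationarity yields, for $i\neq j$,
\[
\Sigma^{(0)}_{G,ij}={\rm Cov}_G\bigl((A^\top X_0)_i,(A^\top X_0)_j\bigr)=(A^\top\Sigma^{(0)}_GA)_{ij}.
\]
The constant $\lambda$ drops out of the covariance. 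On the diagonal, $\Sigma^{(0)}_{G,ii}=m_i(1-m_i)=v_G(X_0)_i$ directly. Combining both cases in matrix form,
\[
\Sigma^{(0)}_G=d_0\bigl(A^\top\Sigma^{(0)}_GA\bigr)+{\rm diag}(v_G(X_0)).
\]

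Next we vectorize. Using ${\rm vec}(BDC)=(C^\top\otimes B){\rm vec}(D)$ with $B=A^\top$ and $C=A$, we get ${\rm vec}(A^\top\Sigma A)=(A^\top\otimes A^\top){\rm vec}(\Sigma)$. The operator $d_0$ corresponds to zeroing the rows $u=n(i-1)+i$, and zeroing those rows of $A^\top\otimes A^\top$ gives exactly $L$. Hence
\[
(I_{n^2}-L)\,{\rm vec}(\Sigma^{(0)}_G)={\rm vec}\bigl({\rm diag}(v_G(X_0))\bigr).
\]

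It remains to show that $I_{n^2}-L$ is invertible, so that ${\cal G}(L)=\sum_u L^u$ is defined. This is the main, though mild, obstacle. We bound the maximum absolute row sum norm. Each row of $L$ is either zero or a row of $A^\top\otimes A^\top$, so
\[
\vertiii{L}_\infty\le\vertiii{A^\top\otimes A^\top}_\infty=\vertiii{A^\top}_\infty^2.
\]
By \eqref{eq:wellposed}, $\vertiii{A^\top}_\infty<1/2$, so $\vertiii{L}_\infty<1/4<1$. The spectral radius of $L$ is therefore less than $1$, the Neumann series converges, and $I_{n^2}-L$ is invertible. Applying ${\cal G}(L)$ to both sides gives \eqref{eq:VecEquation}.

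The steps to check carefully are the indexing convention for the Kronecker product and vec, so that the zeroed rows correspond to the diagonal entries $(i,i)$, and the fact that the stationary law is the unique invariant measure. The latter holds under \eqref{eq:wellposed}, so the identity holds for $\Sigma^{(0)}_G$ itself.
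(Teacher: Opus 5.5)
Your proposal is correct and follows essentially the same route as the paper. You derive $\Sigma^{(0)}_G=d_0(A^{\top}\Sigma^{(0)}_GA)+{\rm diag}(v_G(X_0))$ by conditioning on $X_0$ and using conditional independence and stationarity, which is the paper's Proposition~\ref{prop:cov}; you then identify $L$ as $A^{\top}\otimes A^{\top}$ with the rows $u=n(i-1)+i$ zeroed, and conclude from $\vertiii{L}_{\infty}\le\vertiii{A^{\top}}_{\infty}^{2}<1$ that $I_{n^2}-L$ is invertible with ${\cal G}(L)=\sum_{u\ge 0}L^{u}$.
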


Our final auxiliary result relates the synchronous firing covariance matrix with the $t$-lagged firing covariance matrices  $\Sigma^{(t)}_G$. 
Specifically, we have the following.

\begin{proposition}[Lagged firing covariances]\label{prop:t-laggedCov}
For all $t\geq 1$, the t-lagged and the synchronous firing covariances are related by 
\begin{equation*}
\Sigma^{(t)}_G = (A^{\top})^t \Sigma^{(0)}_G\,.
\end{equation*}

\end{proposition}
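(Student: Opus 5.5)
The plan is to prove the identity by induction on $t$, using the Markov property and the linear form of the conditional means. The basic ingredient is that, by \eqref{eq:cond_ind} and \eqref{eq:firingProb}, for every $s\geq 1$
\[
\Ex_G[X_s\mid X_{s-1},X_{s-2},\ldots,X_0]=\lambda\mathbf{1}_n+A^{\top}X_{s-1}.
\]
This holds for the stationary chain, which exists and is unique under \eqref{eq:wellposed}, as recalled in Section~\ref{sec:NFC}.

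\textbf{Step 1: one-step recursion.} Fix $t\geq 1$ and condition on $\mathcal{F}_{t-1}=\sigma(X_0,\ldots,X_{t-1})$. Since $X_0$ is $\mathcal{F}_{t-1}$-measurable, the tower property gives
\[
\Ex_G[X_t X_0^{\top}]=\Ex_G\big[(\lambda\mathbf{1}_n+A^{\top}X_{t-1})X_0^{\top}\big]=\lambda\mathbf{1}_n\Ex_G[X_0]^{\top}+A^{\top}\Ex_G[X_{t-1}X_0^{\top}].
\]
Taking plain expectations in the same conditional-mean identity gives
\[
\Ex_G[X_t]=\lambda\mathbf{1}_n+A^{\top}\Ex_G[X_{t-1}].
\]
This relation also holds without invoking stationarity.

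\textbf{Step 2: subtract the products of means.} Multiply the mean relation on the right by $\Ex_G[X_0]^{\top}$ and subtract it from the first identity. The $\lambda\mathbf{1}_n\Ex_G[X_0]^{\top}$ terms cancel, and we are left with
\[
\Sigma^{(t)}_G=A^{\top}\big(\Ex_G[X_{t-1}X_0^{\top}]-\Ex_G[X_{t-1}]\Ex_G[X_0]^{\top}\big)=A^{\top}\Sigma^{(t-1)}_G.
\]

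\textbf{Step 3: iterate.} Applying Step 2 repeatedly down to $\Sigma^{(0)}_G$ yields $\Sigma^{(t)}_G=(A^{\top})^t\Sigma^{(0)}_G$.

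No step is a genuine obstacle. The only point needing care is justifying the conditional-mean identity with respect to the full past $\mathcal{F}_{t-1}$ rather than just $X_{t-1}$. This follows directly from the Markov property of the chain and from the fact that, given $X_{t-1}=x$, each coordinate of $X_t$ is Bernoulli with mean $\phi_{G,i}(x)=\lambda+(A^{\top}x)_i$.
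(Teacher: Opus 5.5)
Your proposal is correct and follows essentially the same route as the paper. You condition on the past, use $\Ex_G[X_{i,t}\mid X_{t-1}]=\lambda+(A^{\top}X_{t-1})_i$, subtract the mean relation to get $\Sigma^{(t)}_G=A^{\top}\Sigma^{(t-1)}_G$, and iterate; the only cosmetic differences are that you work in matrix form rather than entrywise, and you use the mean recursion $\Ex_G[X_t]=\lambda\mathbf{1}_n+A^{\top}\Ex_G[X_{t-1}]$ where the paper uses the stationary identity $m=\lambda\mathbf{1}_n+A^{\top}m$, which is equally valid.
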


As already anticipated at the beginning of the section, the results presented in Propositions \ref{prop:mean}, \ref{prop:cov:vec:form} and \ref{prop:t-laggedCov} allow us to compute explicitly the first and second order statistics of the Markov chain model defined through equations \eqref{eq:cond_ind} and \eqref{eq:firingProb} from the interaction matrix $A$, for every choice of the interaction graph $G$. 
The main goal of the next sections is to provide approximations as $n\to\infty$ for these first and second order statistics, when the interaction graph $G$ is random and distributed as a Stochastic Block Model with $k$ communities.

\section{NFC on random environment}\label{sec:NFC_random}

We begin this section by extending  
the NFC to a random environment, i.e., when the underlying interaction graph is random. Given $[n]$ the set of neurons and $\mathcal{C}=\{C_1, \ldots,  C_k\}$ the $k$ communities, the matrix encoding the pairwise interaction between neurons will be distributed similarly as a Stochastic Block Model (SBM) with the $k$ communities~ \cite{JMLR:v18:16-480}. We denote this matrix by $\Theta$ (to differentiate from the constant case $\theta$ in Section~\ref{sec:NFC}). Specifically,  this means that $(\Theta_{ij})_{i,j\in [n]}$ are independent Bernoulli random variables with parameter $(p_b)_{b \in [k]}$ or $q$ depending on whether the corresponding edge is intra or inter, respectively. In particular, $\Theta_{ij}\sim {\rm Ber}(p_b)$ if $i \overset{b}{\sim} j$ for some $b \in [k]$ and $\Theta_{ij}\sim {\rm Ber}(q)$ if $i \nsim j$.  We also assume that the matrix governing whether an edge is inhibitory/excitatory, here denoted by $H$ (to differentiate from the constant case $\eta$ in Section~\ref{sec:NFC}),  has i.i.d. components, independent from $\Theta$, distributed as 
\begin{equation*}
    H_{ij} = \begin{cases}
        +1  & \text{ with probability } \beta  \\
        -1  & \text{ with probability } (1 - \beta)
    \end{cases}
    \,,
\end{equation*}
where $\beta \in [0,1] \setminus \{1/2\}$. 
Recall that we also assume that the $k$ communities have sizes proportional to the total number of neurons, that is, $|C_b|=\alpha_b n\geq 1$ for every $b \in [k]$ with $\alpha_1 + \ldots + \alpha_k=1$ and $\alpha_{min}:=\min_{b\in [k]}\alpha_b >0$. 
Thus, besides the parameters $\lambda, (\mu_b)_{b \in [k]}, \Out, n$,  the NFC on this random environment also depends on $p=(p_b)_{b \in [k]}$, $q, \beta$ and $(\alpha_b)_{b \in [k]}$. Since in our setting, all parameters but $n$ are kept fixed, 
we shall denote by $\mathbb{P}^{(n)}$ the distribution of the interaction graph and by $\mathbb{E}^{(n)}$ the corresponding expectation, to keep the notation lighter.
Moreover, 
given $\theta \in \{0,1\}^{n\times n}$ and $\eta \in \{-1, +1\}^{n\times n}$, we shall denote 
by  $\mathbb{P}_{(\theta, \eta)}$ the stationary distribution of the process $X=(X_t)_{t\geq 0}$ and by $\phi_{(\theta, \eta), i}$, with $i \in [n]$, the firing probability of $i$,  conditioned on $\Theta=\theta$ and $H=\eta$, i.e.,  conditioned on a realization of the interaction graph.  
We shall instead denote 
by  $\mathbb{P}_{(\Theta, H)}$ the (random) distribution of $X_t$, when we conditioned on the random matrices $\Theta$ and $H$, rather than on specific realizations. The corresponding  expectation will be denoted by $\mathbb{E}_{(\Theta, H)}$. 

In this random scenario, the corresponding interaction matrix (see \eqref{eq:matrixA}) is a random matrix. We shall denote $A_{(\theta,\eta)}$ to indicate the realization of this interaction matrix associated to the realization of the interaction graph $\Theta=\theta$ and $H=\eta$. To avoid clutter, when the realization of the interaction graph is not fixed and there is no risk of confusion, we will write $A$ to denote the corresponding random interaction matrix. We adopt a similar notation for the matrix $L$ (see, Proposition \ref{prop:cov:vec:form}) computed from the interaction matrix $A$.  

Let $\Omega^{adm}$ be the set of all realizations of the random environment for which \eqref{eq:wellposed} holds,  i.e., 

\begin{equation}
\label{def:adm_realizations}
 \Omega^{adm}=\left\{(\theta,\eta)\in \{0,1\}^{n\times n}\times \{-1,1\}^{n\times n} :  \vertiii{A^\top_{(\theta,\eta)}}_{\infty}<\lambda\wedge (1-\lambda) \right\}\,.   
\end{equation}


Note that, on the event $\Omega^{adm}$, we have that $\vertiii{A^\top_{(\theta,\eta)}}_{\infty}<1/2$. In particular, the model is well-defined for all pairs $(\theta,\eta)\in \Omega^{adm}$ in the sense that for these pairs the firing probability $\phi_{(\theta,\eta),i}(x)\in (0,1)$ for all $x\in \{0,1\}^n$ and $i\in [n]$. In order to have a well-defined model also for the pairs $(\theta,\eta)\not\in \Omega^{adm}$, we consider for such pairs the truncated firing probability $(\phi_{(\theta,\eta),i}(x))^+\wedge 1$, where $(\cdot)^+$ denotes the positive part.

In what follows, we define 
\begin{equation}\label{def:M}
M:=\max_{b \in [k]}\{\mu_b\alpha_b p_b+\Out(1-\alpha_b)q\}\,,
\end{equation}
and suppose that the model parameters $\lambda, (\alpha_b)_{b \in [k]},(\mu_b)_{b \in [k]}$, $\mu_{\not\sim}, (p_b)_{b \in [k]}$ and $q$ are chosen in such a way that $M$ satisfies
\begin{equation}\label{eq:wellposed_random_scenario}
M<\lambda\wedge (1-\lambda)\,.    
\end{equation}
Under Condition \eqref{eq:wellposed_random_scenario}, we can prove the following result.
\begin{lemma}\label{lem:random_NFC_welldefined}
Suppose that \eqref{eq:wellposed_random_scenario} is satisfied.
Let $\Omega^{adm}$ be the set of admissible realizations defined in \eqref{def:adm_realizations}.  Then, there exists a positive constant 
$C$ depending on the parameters $\lambda, (\alpha_b)_{b \in [k]},(\mu_b)_{b \in [k]}$, $\mu_{\not\sim}, (p_b)_{b \in [k]}$ and $q$ such that $\mathbb{P}^{(n)}((\Theta,H)\in \Omega^{adm})\geq 1-2ne^{-Cn}$. 
\end{lemma}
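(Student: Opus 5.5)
The plan is to reduce admissibility to a statement about maximal in-degrees and then apply Hoeffding's inequality with a union bound over neurons. By the identity noted after \eqref{eq:wellposed}, which holds for any sign matrix $\eta$ since $|\eta_{ji}|=1$,
\[
\vertiii{A^\top_{(\Theta,H)}}_{\infty}=\max_{i\in[n]} S_i,\qquad S_i:=\sum_{b\in[k]}\mathbbm{1}_{\{i\in C_b\}}\Big(\frac{\mu_b}{n}\sum_{j:j\overset{b}{\sim}i}\Theta_{ji}+\frac{\Out}{n}\sum_{j:j\nsim i}\Theta_{ji}\Big).
\]
Hence the sign matrix $H$ plays no role, and the event $\{(\Theta,H)\notin\Omega^{adm}\}$ equals $\bigcup_{i}\{S_i\ge \lambda\wedge(1-\lambda)\}$.

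Fix $i\in C_b$. Then $S_i$ is a sum of at most $n$ independent bounded variables $\Theta_{ji}\,c_j$, where $c_j\in\{\mu_b/n,\Out/n\}$. Its mean is
\[
\Ex^{(n)}[S_i]=\frac{\mu_b}{n}p_b\,\#\{j\in C_b\}+\frac{\Out}{n}q\,(1-\alpha_b)n .
\]
Whether or not $j=i$ is included in the intra sum, this is at most $\mu_b\alpha_b p_b+\Out(1-\alpha_b)q\le M$. Set $\delta:=\lambda\wedge(1-\lambda)-M>0$, which is positive by \eqref{eq:wellposed_random_scenario}. Hoeffding's inequality, with ranges $c_j\le \bar\mu/n$ where $\bar\mu:=\max(\max_b\mu_b,\Out)$, gives
\[
\mathbb{P}^{(n)}\big(S_i\ge \lambda\wedge(1-\lambda)\big)\le \mathbb{P}^{(n)}\big(S_i-\Ex^{(n)} S_i\ge \delta\big)\le \exp\Big(-\frac{2\delta^2}{n(\bar\mu/n)^2}\Big)=e^{-Cn},
\]
with $C=2\delta^2/\bar\mu^2$.

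A union bound over the $n$ neurons then gives $\mathbb{P}^{(n)}(\Omega^{adm}{}^c)\le ne^{-Cn}\le 2ne^{-Cn}$. The factor 2 leaves room for a two-sided bound if one prefers to control $|S_i-\Ex S_i|$, as would be needed later.

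There is no real obstacle here; the points requiring care are the following:
\begin{itemize}
\item one must check that the bound on the mean uses exactly $M$, in particular the diagonal term $j=i$ and the community sizes $|C_b|=\alpha_b n$;
\item one must verify that $C$ depends only on the listed parameters, through $\delta$ and $\bar\mu$;
\item the case $\lambda\in\{0,1\}$ is excluded automatically, since then \eqref{eq:wellposed_random_scenario} cannot hold because $M\ge0$.
\end{itemize}
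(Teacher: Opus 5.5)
Your proposal is correct and follows essentially the paper's route: you reduce admissibility to the weighted in-degree sums (the signs drop out since $|H_{ji}|=1$), apply a Hoeffding-type tail bound to each neuron's centered sum, and take a union bound over the $n$ neurons. The only differences are that the paper bounds the two sides $\lambda-M$ and $1-\lambda-M$ separately via sub-Gaussianity, whereas you apply Hoeffding's inequality with the single threshold $\lambda\wedge(1-\lambda)-M$; this gives the slightly sharper bound $ne^{-Cn}$.
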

Lemma \ref{lem:random_NFC_welldefined} ensures that, whenever  Condition \eqref{eq:wellposed_random_scenario} holds, the random environment $(\Theta,H)$ belongs to the set of admissible realizations $\Omega^{adm}$ with probability converging to $1$  exponentially fast as $n\to\infty$. 
Hence, whenever convenient, we restrict our analysis in what follows to environments lying in $\Omega^{adm}$. 
The proof of Lemma \ref{lem:random_NFC_welldefined} is provided in Appendix \ref{sec:tech:lemmas}.

\subsection{Main results}
\label{sec:main:proba:results}

Before stating our first main  result, let us introduce a couple of definitions. 
Let $W \in \mathbb{R}^{k \times k}$ be the matrix with entries given by 
\begin{equation}\label{def:matrix_G}
W_{ab}:= (2\beta -1)\big(\mu_b \alpha_b p_b \mathbbm{1}_{\{a=b\}} + \Out \alpha_{b} q \mathbbm{1}_{\{b \neq a\}}\big)\,, \quad a, b \in [k]\,.
\end{equation}
 It can be shown that, given $i \in C_a$, then   $W_{ab }=\mathbb{E}^{(n)}\big[(A^\top \mathbf{1}_{C_b})_{i}\big]$, i.e., $W_{a b}$ corresponds to the mean cumulative effect that community $b$ exerts on neuron $i$ belonging to community $C_a$.
Note that, if \eqref{eq:wellposed_random_scenario} holds, then  $\vertiii{W}_{\infty}=\max_{a \in [k]} \sum_{b \in [k]}|W_{a b}|\leq M<(1-\lambda)<1$ and the matrix $\left(I_k - W \right)$ 
is invertible, as the spectral radius $\rho(W)$ of the matrix $W$ satisfies $\rho(W)\leq \vertiii{W}_{\infty}$. 
Thus, whenever \eqref{eq:wellposed_random_scenario} holds,  we denote by $\zeta$ the vector of length $k$ defined as 
\begin{equation}
\label{def:limit:vector:firing:probabilities}
\zeta:= {\cal G}(W)\mathbf{1}_{k}\,. 
\end{equation}

Given the $k$-dimensional vector $\zeta$, we then consider the $n$-dimensional vector $\overline{m}$ defined in \eqref{def:mean:field:approximation:for:the:mean}.
Note that $\overline{m}_i=\lambda \zeta_b$ for all entries $i\in C_b$.
Using that $M<(1-\lambda)$, one can check that the vector $\overline{m}$ satisfies $\|\overline{m}\|_{\infty}<1$.
As we will see in our first main result below, this vector can be seen as an approximation for the random vector of the firing mean  $\mathbb{E}_{(\Theta,H)}[X_0]$, as the size of network $n\to\infty$. Moreover, this approximation holds with high probability and its error is of order $\sqrt{\log(n)/n}$. 
 
\begin{theorem}\label{thm:convergence_mean_version2}
Suppose \eqref{eq:wellposed_random_scenario} is granted and let 
$\overline{m}$ be the vector defined in \eqref{def:mean:field:approximation:for:the:mean}.
%
Then, for every $\delta \in (0,1]$ and every $n\geq k$, it holds that 
\begin{equation*}
\mathbb{P}^{(n)}\bigg(\big\Vert \mathbb{E}_{(\Theta,H)}[X_0] - \overline{m}\big\Vert_{\infty}\geq 2 k \lambda \Vert \zeta\Vert_{\infty} R \sqrt{\frac{2\alpha_{\rm min}}{n}\log\Big(\frac{2 k n}{\delta}\Big)}\bigg)\leq \delta+2n e^{-Cn}\,,
\end{equation*}
where 
$R:=\max_{b \in [k]} |\mu_b|\vee |\Out|$ and $C$ is the positive constant given by Lemma \ref{lem:random_NFC_welldefined}.
\end{theorem}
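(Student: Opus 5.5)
We restrict to the event $\{(\Theta,H)\in\Omega^{adm}\}$, whose complement has probability at most $2ne^{-Cn}$ by Lemma \ref{lem:random_NFC_welldefined}. On this event, Proposition \ref{prop:mean} gives $\Ex_{(\Theta,H)}[X_0]={\cal G}(A^\top)\lambda\mathbf{1}_n$. Writing $\bar A:=\Ex^{(n)}[A]$, the first step is to verify that ${\cal G}(\bar A^\top)\lambda\mathbf{1}_n=\overline{m}$ exactly. The point is that $\bar A^\top\mathbf{1}_{C_b}=\sum_a W_{ab}\mathbf{1}_{C_a}$, by the definition of $W$. The diagonal entries need care: $\Theta_{ii}$ is allowed to be present, so the count of neighbours in $C_b$ is $\alpha_b n$ and the identity is exact. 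Consequently the span of $\{\mathbf{1}_{C_a}\}$ is invariant, and $(\bar A^\top)^u\mathbf{1}_n=\sum_a (W^u\mathbf{1}_k)_a\mathbf{1}_{C_a}$. Summing the Neumann series (convergent since $\vertiii{\bar A^\top}_\infty\le M<1$) yields $\lambda\sum_a\zeta_a\mathbf{1}_{C_a}=\overline{m}$.

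Next we use a resolvent identity that puts the randomness in a single factor:
\[
{\cal G}(A^\top)\lambda\mathbf{1}_n-\overline{m}={\cal G}(A^\top)(A^\top-\bar A^\top)\,\overline{m}.
\]
This follows from ${\cal G}(A^\top)-{\cal G}(\bar A^\top)={\cal G}(A^\top)(A^\top-\bar A^\top){\cal G}(\bar A^\top)$. Taking sup norms and using $\vertiii{{\cal G}(A^\top)}_\infty\le (1-\vertiii{A^\top}_\infty)^{-1}\le 2$ on $\Omega^{adm}$ (since $\vertiii{A^\top}_\infty<1/2$ there), it suffices to bound $\|(A^\top-\bar A^\top)\overline{m}\|_\infty$. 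We expand $\overline{m}=\lambda\sum_b\zeta_b\mathbf{1}_{C_b}$, which gives
\[
\|(A^\top-\bar A^\top)\overline m\|_\infty\le \lambda\|\zeta\|_\infty\sum_{b\in[k]}\max_{i\in[n]}\big|((A-\bar A)^\top\mathbf{1}_{C_b})_i\big|.
\]

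For fixed $i$ and $b$, the quantity $((A-\bar A)^\top\mathbf{1}_{C_b})_i=\sum_{j\in C_b}(A_{ji}-\Ex^{(n)} A_{ji})$ is a sum of $\alpha_b n$ independent centered variables, each with range at most $2R/n$. Hoeffding's inequality then gives
\[
\mathbb{P}^{(n)}\big(|\cdot|\ge s\big)\le 2\exp\!\big(-s^2 n/(2\alpha_b R^2)\big).
\]
A union bound over the $kn$ pairs $(i,b)$, with $s$ chosen so that the total failure probability is $\delta$, yields a deviation of order $R\sqrt{(2\alpha/n)\log(2kn/\delta)}$ for each term. Summing over the $k$ communities and multiplying by the factor $2$ from the resolvent produces the stated threshold $2k\lambda\|\zeta\|_\infty R\sqrt{\cdot}$. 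One caveat concerns the constant: Hoeffding naturally produces $\alpha_b\le\alpha_{\max}$ inside the square root rather than the $\alpha_{\min}$ written in the statement, so the bookkeeping of that constant needs checking. The rate itself is unaffected. Finally we combine the Hoeffding failure probability $\delta$ with the non-admissibility probability $2ne^{-Cn}$.

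The main obstacle is the exact block identity for $\bar A^\top$, namely getting the diagonal/self-loop convention right so that $\overline m$ is exactly, not approximately, the deterministic fixed point. If self-loops were excluded, an $O(1/n)$ correction would appear, which is dominated by the $\sqrt{\log n/n}$ rate but would have to be absorbed into the constants.
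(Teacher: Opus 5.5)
Your proposal follows the paper's proof essentially step for step. It uses the same ingredients: restriction to $\Omega^{adm}$; the resolvent identity of Lemma~\ref{lem:1}, combined with the exact block identity ${\cal G}(\mathbb{E}^{(n)}[A^{\top}])\mathbf{1}_n=\sum_{b}\zeta_b\mathbf{1}_{C_b}$ (from Lemma~\ref{lem:EAt:power:1Ca}; self-loops are included, so it is exact, as you say); Hoeffding with a union bound over the $kn$ pairs $(i,b)$; and the Neumann-series factor $2$.

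Your caveat about the constant is well founded and points to a slip in the paper itself. Its Hoeffding step bounds $2\exp\{-z^2n/(2R^2\alpha_b)\}$ by $2\exp\{-z^2n/(2R^2\alpha_{\min})\}$, which goes the wrong way since $\alpha_b\geq\alpha_{\min}$. So this argument, yours or the paper's, proves the bound with $\alpha_{\max}$ (or simply $1$) in place of $\alpha_{\min}$. The $\alpha_{\min}$ version also appears to fail for sufficiently unbalanced communities. For $i$ in the largest community $C_b$, the leading term $\lambda\zeta_b\big((A-\mathbb{E}^{(n)}[A])^{\top}\mathbf{1}_{C_b}\big)_i$ has maximal fluctuation of order $\sqrt{\alpha_{\max}\log(n)/n}$. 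That exceeds the stated threshold, which scales like $\sqrt{\alpha_{\min}}$.
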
  
The proof of Theorem~\ref{thm:convergence_mean_version2} will be given in Appendix \ref{sec:appendix:asymp:app:mean:variance}.

\medskip 
 Using Theorem \ref{thm:convergence_mean_version2}, 
we can show that the random vector of the firing variances $v_{\Theta,H}(X_0)=\mathbb{E}_{(\Theta,H)}[X_0]\odot(\mathbf{1}_n-\mathbb{E}_{(\Theta,H)}[X_0])$ can be approximated, with high probability, by the vector $\overline{v}$ defined in \eqref{def:mean:field:approx:for:variance}, 
up to an error term that is of order $\sqrt{\log(n)/n}.$  
This is the content of the next result, which is also proved in Appendix \ref{sec:appendix:asymp:app:mean:variance}. 
\begin{corollary}\label{cor:convergence_variance}
Suppose \eqref{eq:wellposed_random_scenario} is granted and let 
$\overline{v}$ be the vector defined in \eqref{def:mean:field:approx:for:variance}.
%
Then, for every $\delta \in (0,1]$ and every $n\geq k$, it holds that 
\begin{equation*}
\mathbb{P}^{(n)}\bigg(\big\Vert v_{(\Theta,H)}(X_0) - \overline{v} \big\Vert_{\infty}\geq 2 k \lambda \Vert \zeta\Vert_{\infty} R \sqrt{\frac{2\alpha_{\rm min}}{n}\log\Big(\frac{2 k n}{\delta}\Big)}\bigg)\leq \delta+2n e^{-Cn}\,,
\end{equation*}
where 
$R$ and $C$ are positive constants defined in Theorem \ref{thm:convergence_mean_version2} and Lemma \ref{lem:random_NFC_welldefined} respectively.
\end{corollary}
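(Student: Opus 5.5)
The plan is to deduce Corollary \ref{cor:convergence_variance} directly from Theorem \ref{thm:convergence_mean_version2}, using a deterministic Lipschitz estimate for the map $x\mapsto x(1-x)$ on $[0,1]$. Write $m:=\mathbb{E}_{(\Theta,H)}[X_0]$, so that $v_{(\Theta,H)}(X_0)=m\odot(\mathbf{1}_n-m)$ and $\overline{v}=\overline{m}\odot(\mathbf{1}_n-\overline{m})$. For every coordinate $i$ the factorization
\[
m_i(1-m_i)-\overline{m}_i(1-\overline{m}_i)=(m_i-\overline{m}_i)\,(1-m_i-\overline{m}_i)
\]
holds, and hence $|v_{(\Theta,H)}(X_0)_i-\overline{v}_i|\le |m_i-\overline{m}_i|\,|1-m_i-\overline{m}_i|$.

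The next step is to check that $|1-m_i-\overline{m}_i|\le 1$ for all $i$, at least on the event $(\Theta,H)\in\Omega^{adm}$. Two facts are needed.
\begin{itemize}
\item Since $m_i$ is a probability, $m_i\in[0,1]$ always; this holds for every realization, including the truncated ones.
\item We have $\overline{m}_i=\lambda\zeta_b\in[0,1)$. The bound $\overline{m}_i<1$ is noted in the text. For nonnegativity, I expect to argue that $\zeta={\cal G}(W)\mathbf{1}_k$ satisfies $\zeta_b\ge 1-\sum_{u\ge1}\vertiii{W}_\infty^u = 1-M/(1-M)$, which is positive because $M<1/2$.
\end{itemize}
If nonnegativity of $\overline{m}$ turned out to be delicate, it would suffice to use $\overline{m}_i\ge -\lambda M/(1-M)$ together with $\overline{m}_i<1$. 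With $m_i,\overline{m}_i\in[0,1]$ we get $1-m_i-\overline{m}_i\in[-1,1]$. Consequently
\[
\|v_{(\Theta,H)}(X_0)-\overline{v}\|_\infty\le \|m-\overline{m}\|_\infty
\]
holds deterministically for every realization of the environment.

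The conclusion then follows by event inclusion. The event that $\|v_{(\Theta,H)}(X_0)-\overline{v}\|_\infty$ exceeds the threshold $2k\lambda\|\zeta\|_\infty R\sqrt{(2\alpha_{\min}/n)\log(2kn/\delta)}$ is contained in the event that $\|m-\overline{m}\|_\infty$ exceeds the same threshold. Theorem \ref{thm:convergence_mean_version2} bounds the probability of the latter by $\delta+2ne^{-Cn}$.

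The only mildly delicate point is the sign and range check on $\overline{m}$, which guarantees that the Lipschitz constant is at most $1$ and not $2$. This step requires no probabilistic input beyond Theorem \ref{thm:convergence_mean_version2}.
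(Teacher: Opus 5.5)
Your proposal is correct and is essentially the paper's proof. Both use the factorization $m_i(1-m_i)-\overline{m}_i(1-\overline{m}_i)=(m_i-\overline{m}_i)(1-m_i-\overline{m}_i)$, then event inclusion and Theorem~\ref{thm:convergence_mean_version2}, and you also justify $|1-m_i-\overline{m}_i|\le 1$ via $\zeta_b\ge 1-M/(1-M)>0$ and $\lambda\zeta_b\le\lambda/(1-M)<1$, which the paper leaves implicit. Drop the fallback remark, though: a negative $\overline{m}_i$ would let $1-m_i-\overline{m}_i$ reach $1+\lambda M/(1-M)>1$, so that route would not give the constant $1$; your nonnegativity argument is valid, so the fallback is never needed.
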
  
Next, we study the asymptotic behavior of the synchronous covariance matrix $\Sigma^{(0)}_{(\Theta,H)}$. 
With high probability, its diagonal terms are uniformly close to the vector $\overline{v}$, up to an error term of order $\sqrt{\log(n)/n}$. 
This follows from Corollary \ref{cor:convergence_variance} and the fact that the diagonal of $\Sigma^{(0)}_{(\Theta,H)}$ is given by the vector $v_{(\Theta,H)}(X_0)$.
Hence, it remains to understand the asymptotic behavior of the off-diagonal entries of synchronous covariance matrix, that is, of the matrix $d_0(\Sigma^{(0)}_{(\Theta,H)})$.

In what follows, let $\xi$ be the $k$-dimensional vector 
\begin{equation}\label{def:xi:vector}
\xi=(\xi_1,\ldots,\xi_k)\,,\quad
\xi_b:=\lambda\zeta_b\big(1-\lambda\zeta_b\big)\,,\quad b\in[k],
\end{equation}
where $\zeta$ is the vector defined in \eqref{def:limit:vector:firing:probabilities}. Let also $\alpha^{-1}:=(\alpha^{-1}_1,\ldots, \alpha^{-1}_k)$. Given the vectors $\xi$ and $\alpha^{-1}$, we then consider the $k\times k$ matrix $\Psi$, defined as 
\begin{equation}\label{def:Psi}
\Psi:=\sum_{u=1}^{\infty}W^{u}\,{\rm diag}\big(\xi\odot \alpha^{-1}\big)\,\big(W^{\top}\big)^{u}\,.
\end{equation}
Recall that under Condition \eqref{eq:wellposed_random_scenario}, we have
$\rho(W)\leq \vertiii{W}_{\infty}<1$, where $\rho(W)$ denotes the spectral radius of the matrix $W$.
Hence, Lemma \ref{lem:Psi:well:defined} ensures that the matrix $\Psi$ is well-defined and is the unique solution of the Stein equation  
\begin{equation}
\label{def:stein:equation}
\Psi-W\Psi W^{\top}=W\,{\rm diag}\big(\xi\odot \alpha^{-1}\big)\,W^{\top}\,.
\end{equation}
Note that the matrix $\Psi$ is symmetric (this can be verified directly from \ref{def:Psi}).

Now, recall the deterministic matrix $\overline{\Sigma}^{(0)}$ defined in \eqref{def:mean:field:approximation:for:the:simul:cov}, whose entry $\overline{\Sigma}^{(0)}_{ij}$ is given by $n^{-1}\Psi_{ab}$ for all $i\in C_a$ and $j\in C_b$. Note that since the matrix $\Psi$ is symmetric, the matrix $\overline{\Sigma}^{(0)}$ is also symmetric.
In our next main result, we show that, with high probability, the entries of $\Sigma^{(0)}_{(\Theta,H)}$ and $\overline{\Sigma}^{(0)}$ become uniformly close as the size of the network $n\to\infty$. 

\begin{theorem}\label{thm:convergence_cov}
Suppose~\eqref{eq:wellposed_random_scenario} is granted and let $\overline{\Sigma}^{(0)}$ be the matrix defined in \eqref{def:mean:field:approximation:for:the:simul:cov}.
Then there exists a positive constant $K$ independent of $n$ such that for every
$\delta\in(0,1]$ and every $n\geq k$,
\[
\mathbb{P}^{(n)}\bigg(\Bigvertiii{ d_0(\Sigma_{(\Theta,H)}^{(0)})-d_0(\overline{\Sigma}^{(0)})}_{max}
\geq K\sqrt{\frac{\log(n/\delta)}{n^3}}\bigg)\leq\delta+2n e^{-Cn}\,,
\]
where $C$ is the positive constant given by Lemma \ref{lem:random_NFC_welldefined}.
\end{theorem}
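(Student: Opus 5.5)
The plan follows the same three-step strategy as Theorem \ref{thm:convergence_mean_version2}, but at the level of the vectorization. We work on $\Omega^{adm}$, which costs $2ne^{-Cn}$ in probability by Lemma \ref{lem:random_NFC_welldefined}. By Proposition \ref{prop:cov:vec:form}, for $i\neq j$,
\[
\Sigma^{(0)}_{(\Theta,H),ij}=\big({\cal G}_{\geq 1}(L)\,{\rm vec}({\rm diag}(v_{(\Theta,H)}(X_0)))\big)_{ij}.
\]
Here the $I_{n^2}$ part contributes nothing off the diagonal. Expanding the Neumann series,
\[
{\cal G}_{\geq 1}(L)=\sum_{u\ge1}L^u .
\]
We first compare $L^u$ with $(A^\top\otimes A^\top)^u=(A^\top)^u\otimes(A^\top)^u$. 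The only difference is that $L$ kills the diagonal rows $u=n(i-1)+i$. The paths through diagonal indices $(h,h)$ at intermediate steps produce corrections of the form $\sum_h (A^\top)^{s}_{ih}(A^\top)^{s}_{jh}\,\Sigma_{hh}$-type terms. Since entries of $A$ are $O(1/n)$, each such correction is $O(n\cdot n^{-2})$ times the leading behaviour, but this order is too large to drop. So the right plan is to keep the exact identity $d_0(\Sigma^{(0)})=d_0\big(\sum_{u\ge1}(A^\top)^u\,{\rm diag}(\tilde v)\,A^u\big)$, derived from the Stein-type relation $\Sigma^{(0)}=A^\top\Sigma^{(0)}A+{\rm diag}(v)$. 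This relation is the content of Proposition \ref{prop:cov:vec:form}, written with $\tilde v:={\rm diag}(\Sigma^{(0)}-A^\top\Sigma^{(0)}A)$, which equals $v-{\rm diag}(A^\top d_0(\Sigma^{(0)})A)-{\rm diag}(A^\top{\rm Diag}(\Sigma^{(0)})A)$. The correction $\tilde v-v$ is $O(1/n)$ entrywise, deterministic to leading order, and its effect on the off-diagonal entries is $O(n^{-2})$, which is acceptable.

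The core computation compares
\[
T_{ij}:=\sum_{u\ge1}\big((A^\top)^u{\rm diag}(v)A^u\big)_{ij}=\sum_{u\ge1}\sum_h (A^\top)^u_{ih}\,v_h\,(A^\top)^u_{jh}
\]
with its block deterministic limit. We replace $v$ by $\overline v$ using Corollary \ref{cor:convergence_variance}. The error is then bounded by $\|v-\overline v\|_\infty\sum_u\sum_h|(A^\top)^u_{ih}||(A^\top)^u_{jh}|$. Since $\max_{h}|(A^\top)^u_{ih}|\le \vertiii{A^\top}^{u-1}_\infty R/n$ and $\sum_h|(A^\top)^u_{jh}|\le\vertiii{A^\top}^u_\infty$, this error is $O(n^{-1}\sqrt{\log n/n})$, as required.

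Next, with $\overline v=\sum_b \xi_b\mathbf 1_{C_b}$, we write
\[
\sum_h (A^\top)^u_{ih}\overline v_h (A^\top)^u_{jh}=\sum_b\xi_b\sum_{h\in C_b}(A^\top)^u_{ih}(A^\top)^u_{jh}.
\]
For $u=1$ and $i\neq j$, this is a sum over $h\in C_b$ of products of independent centered-plus-mean variables $A_{hi}A_{hj}$, each of size $O(n^{-2})$. Its mean is $|C_b|\,W_{ab}W_{a'b}/(\alpha_b n)^2$, that is, $n^{-1}W_{ab}\alpha_b^{-1}W_{a'b}$, and Hoeffding gives fluctuations of order $n^{-2}\sqrt{n\log(n/\delta)}=\sqrt{\log(n/\delta)/n^3}$. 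Summing over $b$ reproduces $n^{-1}(W{\rm diag}(\xi\odot\alpha^{-1})W^\top)_{aa'}$.

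For $u\ge2$, we decompose
\[
(A^\top)^u_{ih}=\big((A^\top)^{u-1}A^\top\big)_{ih}
\]
and use the known row-sum concentration from Lemma \ref{lem:1} and Lemma \ref{lem:EAt:power:1Ca}, which give $(A^\top)^{u-1}\mathbf 1_{C_c}\approx \sum (W^{u-1})\mathbf 1$ blockwise with error $\sqrt{\log n/n}$. The aim is a recursion: $S^{(u)}_{ij}:=\sum_h (A^\top)^u_{ih}\overline v_h(A^\top)^u_{jh}$ satisfies $S^{(u)}=A^\top S^{(u-1)}A$. If $d_0(S^{(u-1)})$ is blockwise $n^{-1}\Psi^{(u-1)}$ up to $\epsilon_{u-1}$ and its diagonal is $O(1/n)$, then $A^\top S^{(u-1)}A$ off the diagonal equals the block matrix $n^{-1}W\Psi^{(u-1)}W^\top$. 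The error is $\epsilon_u\le M^2\epsilon_{u-1}+O(\sqrt{\log n/n^3})$, where the last term comes from the quadratic form $\sum_{h,l}A_{hi}\,\mathbf 1\mathbf 1^\top\,A_{lj}$, controlled via the row-sum concentration. Because $M<1$, the geometric series over $u$ sums to a constant times $\sqrt{\log(n/\delta)/n^3}$. A union bound over the $n^2$ pairs and over $k^2$ blocks produces the $\log(n/\delta)$. For large $u$, say $u>c\log n$, we truncate the series using $M^{2u}\le n^{-3/2}$.

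The main obstacle is this recursive step, for two reasons. The errors must stay of order $n^{-3/2}$ rather than $n^{-1}$, which requires that the $O(1/n)$ diagonal of $S^{(u-1)}$ contribute only $O(n^{-2})$ after conjugation by $A$; this holds because $\sum_h A_{hi}A_{hj}\cdot O(1/n)=O(n^{-2})$. In addition, the concentration must be uniform despite the dependence between $A$ and $S^{(u-1)}$. To handle the latter, I would avoid conditioning altogether. Instead, I would bound everything through the single family of concentration events $\{|(A^\top\mathbf 1_{C_b})_i-W_{ab}|\le \sqrt{\log/n}\}$ and $\{|\sum_{h\in C_b}A_{hi}A_{hj}-\mathbb E|\le n^{-3/2}\sqrt{\log}\}$ for $i\ne j$, which hold simultaneously with probability at least $1-\delta$.
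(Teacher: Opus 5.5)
Your argument is correct, but it is organized differently from the paper's proof.

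The paper stays with the vectorized operator throughout. After replacing $v_{(\Theta,H)}(X_0)$ by $\overline v$, it applies the resolvent identity once to compare ${\cal G}(L)$ with ${\cal G}(\mathbb{E}^{(n)}[L])$. It then computes ${\cal G}_{\geq1}(\mathbb{E}^{(n)}[L])\,{\rm vec}({\rm diag}(\overline v))$ exactly on the span of the vectors $v^{(a,c)}$ (Lemma~\ref{lem:gen:L}, Proposition~\ref{prop:gen:Psi}). There the $d_0$ projections appear as factors $1-\frac{1}{n\alpha_a}$, and controlling them requires solving a perturbed $k^2\times k^2$ Stein system, which yields the $E/n^2$ correction.

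You instead remove the projection at the outset. You absorb ${\rm Diag}(A^\top\Sigma^{(0)}A)$ into $\tilde v$ at cost $O(n^{-2})$, and then propagate the block structure through $S^{(u)}=A^\top S^{(u-1)}A$. This produces $\Psi=\sum_{u\ge1}W^u{\rm diag}(\xi\odot\alpha^{-1})(W^\top)^u$ directly from \eqref{def:Psi}, with no resolvent algebra on $\mathbb{E}^{(n)}[L]$. It uses only the one-step row-sum bound \eqref{eq:bound_err} and the pair sums $\sum_{h\in C_b}A_{hi}A_{hj}$. These are the same two concentration inputs the paper uses to control $(L-\mathbb{E}^{(n)}[L])v^{(a,b)}$ and $(L-\mathbb{E}^{(n)}[L]){\rm vec}({\rm diag}(\overline v))$. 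The paper's route handles all powers in one stroke. Yours trades that for more elementary, non-vectorized bookkeeping, and your approach of using a fixed finite family of concentration events avoids any conditioning issues.

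A few details need tightening.

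\begin{itemize}
\item The bound $\Vert\tilde v-v\Vert_\infty=O(1/n)$ requires an a priori estimate $\vertiii{d_0(\Sigma^{(0)}_{(\Theta,H)})}_{max}=O(1/n)$, which you should state. It follows on $\Omega^{adm}$ from Proposition~\ref{prop:cov:vec:form}, since $\vertiii{{\cal G}(L)}_\infty\le 4/3$ and $\Vert L\,{\rm vec}({\rm diag}(v))\Vert_\infty\le \frac{R}{n}\vertiii{A^\top}_\infty$.
\item The contraction factor in your recursion for $\epsilon_u$ is $\vertiii{A^\top}_\infty^2<1/4$ on $\Omega^{adm}$, not $M^2$. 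The error matrix is sandwiched between random factors $A^\top$ and $A$, and $\vertiii{A^\top}_\infty$ typically exceeds $M$.
\item You must check that the per-step fluctuation term decays geometrically in $u$; otherwise truncating at $u\asymp\log n$ loses a factor $\log n$. It does decay: it is proportional to $\vertiii{\Psi^{(u-1)}}_{max}\le M^{2(u-2)}\vertiii{\Psi^{(1)}}_{max}$ and to the diagonal of $S^{(u-1)}$, which is $O(4^{-u}/n)$. Alternatively, run your one-step argument once on $T=\sum_{u\ge1}S^{(u)}$, which satisfies $T=S^{(1)}+A^\top T A$.
\item Your remark that the intermediate diagonal corrections in $L^u$ are too large to drop is wrong. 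They are $O(n^{-2})$, which is exactly what your own $\tilde v$ estimate shows.
\end{itemize}
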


Note that, by Corollary \ref{cor:convergence_variance}, the diagonal entries of the synchronous covariance matrix are of order $1$ asymptotically as $n\to \infty$, with high probability. 
As a consequence of Theorem~\ref{thm:convergence_cov}, we see that 
its off-diagonal entries have a different order of magnitude,    
scaling as $n^{-1}$. 
Theorem~\ref{thm:convergence_cov} also tells us that the synchronous covariance matrix inherits the block structure present in the underlying random graph $\Theta$.
The proof of Theorem \ref{thm:convergence_cov} is presented in Appendix \ref{sec:asym:approx:syncronous:cov:matrix}. 




Finally, we focus on the asymptotic behavior of the $t$-lagged covariance matrices $\Sigma^{(t)}_{(\Theta,H)}$, for $t\geq 1$. To describe their asymptotic behavior, we consider the random matrix $\overline{\Sigma}^{(t)}_{(\Theta,H)}$ defined in \eqref{def:gen:Gamma:t}.


\begin{theorem}\label{thm:gen:convergence_cov_lag}
Suppose~\eqref{eq:wellposed_random_scenario} is granted and let $\overline{\Sigma}^{(t)}_{(\Theta,H)}$ be the random matrix defined in~\eqref{def:gen:Gamma:t}, for every integer $t\geq 1$.
Then, there exists a positive constant $K$ independent of $n$ and $t$, such that for every
$\delta\in(0,1]$, every $n\geq k$ and every integer $t\geq 1$,
\[
\mathbb{P}^{(n)}\bigg(
\Bigvertiii{\Sigma^{(t)}_{(\Theta,H)}-\overline{\Sigma}^{(t)}_{(\Theta,H)}}_{max}
\geq K\sqrt{\frac{\log(n/\delta)}{n^3}}\bigg)\leq \delta + 2n e^{-Cn}\,,
\]
where $C$ is the positive constant given by Lemma \ref{lem:random_NFC_welldefined}.
\end{theorem}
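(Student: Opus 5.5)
On the admissible event $\Omega^{adm}$, Proposition~\ref{prop:t-laggedCov} gives $\Sigma^{(t)}_{(\Theta,H)}=(A^\top)^t\Sigma^{(0)}_{(\Theta,H)}$. We split $\Sigma^{(0)}_{(\Theta,H)}={\rm diag}(v_{(\Theta,H)}(X_0))+d_0(\Sigma^{(0)}_{(\Theta,H)})$. We then compare term by term with \eqref{def:gen:Gamma:t}, using the telescoping decomposition
\[
\Sigma^{(t)}_{(\Theta,H)}-\overline{\Sigma}^{(t)}_{(\Theta,H)}
=(A^\top)^t{\rm diag}\big(v_{(\Theta,H)}(X_0)-\overline{v}\big)
+(A^\top)^t\big(d_0(\Sigma^{(0)}_{(\Theta,H)})-\overline{\Sigma}^{(0)}\big)
+\Big((A^\top)^t\overline{\Sigma}^{(0)}-\tfrac1n\textstyle\sum_{a,b}(W^t\Psi)_{ab}\mathbf{1}_{C_a}\mathbf{1}_{C_b}^\top\Big).
\]
Here $\overline{\Sigma}^{(0)}$ is the full block matrix, so the second term contains a diagonal correction.

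For the first term, each entry $((A^\top)^t)_{ij}(v_i-\overline v_i)$ is the product of an entry of $(A^\top)^t$ and a coordinate difference. Since entries of $A^\top$ are $O(1/n)$ and $\vertiii{A^\top}_\infty<1$ on $\Omega^{adm}$, induction gives $\vertiii{(A^\top)^t}_{max}\le \vertiii{A^\top}_{max}\vertiii{A^\top}_{\infty}^{t-1}\le R/n$, uniformly in $t$. Together with Corollary~\ref{cor:convergence_variance}, this bounds the first term by $O(n^{-1}\sqrt{\log(n/\delta)/n})=O(\sqrt{\log(n/\delta)/n^3})$.

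For the second term, we use $\vertiii{BC}_{max}\le\vertiii{B}_\infty\vertiii{C}_{max}$ with $\vertiii{(A^\top)^t}_\infty<1$. The off-diagonal part of $d_0(\Sigma^{(0)})-\overline{\Sigma}^{(0)}$ is controlled by Theorem~\ref{thm:convergence_cov}. The diagonal part ${\rm Diag}(\overline\Sigma^{(0)})$ has entries $\Psi_{aa}/n$, so $(A^\top)^t{\rm Diag}(\overline\Sigma^{(0)})$ has max norm at most $\vertiii{(A^\top)^t}_{max}\max_a|\Psi_{aa}|/n=O(n^{-2})$. This is within the claimed rate.

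The third term is the main obstacle. We have
\[
(A^\top)^t\overline{\Sigma}^{(0)}=\tfrac1n\sum_{a,b}\Psi_{ab}\,\big((A^\top)^t\mathbf{1}_{C_a}\big)\mathbf{1}_{C_b}^\top,
\]
so it suffices to show $\|(A^\top)^t\mathbf{1}_{C_a}-\sum_c (W^t)_{ca}\mathbf{1}_{C_c}\|_\infty\le K\sqrt{\log(n/\delta)/n}$ with probability $1-\delta$, uniformly in $t$. The constant must not depend on $t$, because the theorem's $K$ is independent of $t$.

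For $t=1$, this is Hoeffding's inequality applied to $(A^\top\mathbf{1}_{C_a})_i=\sum_{j\in C_a}A_{ji}$, a sum of independent terms bounded by $R/n$ whose mean is $W_{ca}$ for $i\in C_c$ (the paper notes this identity). A union bound over $i$ and $a$ then gives the $t=1$ estimate. For general $t$, we write $y_t:=(A^\top)^t\mathbf{1}_{C_a}$ and $\bar y_t:=\sum_c(W^t)_{ca}\mathbf{1}_{C_c}$. Then
\[
y_t-\bar y_t=A^\top(y_{t-1}-\bar y_{t-1})+\big(A^\top\bar y_{t-1}-\bar y_t\big),
\]
and $A^\top\bar y_{t-1}-\bar y_t=\sum_c (W^{t-1})_{ca}(A^\top\mathbf{1}_{C_c}-\sum_d W_{dc}\mathbf{1}_{C_d})$ is controlled by the $t=1$ event, with coefficient $\sum_c|(W^{t-1})_{ca}|\le k\vertiii{W}^{t-1}$ in a suitable norm. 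Iterating and using $\vertiii{A^\top}_\infty\le \lambda\wedge(1-\lambda)<1$ gives a geometric sum of the form $\sum_s \vertiii{A^\top}_\infty^{\,s}\,\rho^{t-1-s}\epsilon_n$. This is bounded uniformly in $t$ by $K\epsilon_n$, since both $M$ and $\vertiii{A^\top}_\infty$ are below $1$ on $\Omega^{adm}$; here we may use $\vertiii{W^\top}_\infty$ or $\vertiii{W}_\infty\le M$, possibly switching norms with a factor $k$.

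Crucially, a single high-probability event, namely the $k$ Hoeffding bounds, serves all $t$ simultaneously. Multiplying by $\Psi_{ab}/n$ then yields $O(\sqrt{\log(n/\delta)/n^3})$. Intersecting this event with the events of Corollary~\ref{cor:convergence_variance}, Theorem~\ref{thm:convergence_cov} and $\Omega^{adm}$, each at confidence level $\delta/3$, and absorbing constants into $K$, gives the claimed probability bound $\delta+2ne^{-Cn}$.
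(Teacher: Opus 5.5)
Your argument is correct. Your first term is the paper's $R_2$, and your second term is its $R_1$ plus an $O(n^{-2})$ diagonal correction (the counterpart of the diagonal piece of its $R_4$). The genuine difference lies in how you compare $(A^\top)^t\overline{\Sigma}^{(0)}$ with $\frac1n\sum_{a,b}(W^t\Psi)_{ab}\mathbf{1}_{C_a}\mathbf{1}_{C_b}^\top$.

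The paper writes $(A^\top)^t-(\mathbb{E}^{(n)}[A^\top])^t=\sum_{s=0}^{t-1}(A^\top)^{t-1-s}\Delta(\mathbb{E}^{(n)}[A^\top])^s$ with $\Delta=A^\top-\mathbb{E}^{(n)}[A^\top]$. It bounds the resulting term by $2\vertiii{\Delta}_\infty\vertiii{d_0(\overline{\Sigma}^{(0)})}_{max}$ and computes the deterministic remainder exactly with Lemma~\ref{lem:EAt:power:1Ca}. You instead push the bounds on the $k$ vectors $\epsilon^{(c)}=\Delta\mathbf{1}_{C_c}$ through the recursion for $(A^\top)^t\mathbf{1}_{C_a}$.

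This buys more than tidiness. The paper's step needs $\vertiii{\Delta}_\infty\le K\sqrt{\log(n/\delta)/n}$. But $\vertiii{\Delta}_\infty=\max_i\sum_j|\Delta_{ij}|$ is a maximum of sums of $n$ nonnegative terms of order $1/n$ with no cancellation, hence of order one for nondegenerate parameters. As written, that inequality therefore only gives $O(1/n)$ for this term. The cancellation that makes it of order $\sqrt{\log(n)/n^3}$ is visible only when $\Delta$ acts on the community indicators, which is precisely what your recursion uses. The paper's telescoping can be repaired the same way, since $(\mathbb{E}^{(n)}[A^\top])^s\overline{\Sigma}^{(0)}$ has its columns in the span of the $\mathbf{1}_{C_c}$.

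There are two details to fix when writing it up.

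\emph{Norm choice.} Of the norms you list, only the second works in general: use $\sum_c|(W^{t-1})_{ca}|\le k\vertiii{W^{t-1}}_\infty\le kM^{t-1}$. The maximal column sum $\vertiii{W^\top}_\infty=\max_b|2\beta-1|\alpha_b(\mu_bp_b+(k-1)\Out q)$ is not controlled by $M$ and can exceed $1$ for unbalanced communities.

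\emph{Probability bookkeeping.} To end with exactly $\delta+2ne^{-Cn}$, invoke Corollary~\ref{cor:convergence_variance} and Theorem~\ref{thm:convergence_cov} in the form established inside their proofs, namely $\mathbb{P}^{(n)}(\mathcal{E}\cap\{\cdots\})\le\delta$ with $\mathcal{E}=\{(\Theta,H)\in\Omega^{adm}\}$. Then $\mathbb{P}^{(n)}(\mathcal{E}^c)\le 2ne^{-Cn}$ is paid only once, rather than once per ingredient.
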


Unlike
Theorems~\ref{thm:convergence_mean_version2} and~\ref{thm:convergence_cov}, note that in Theorem \ref{thm:gen:convergence_cov_lag} the
 approximating matrix $\overline{\Sigma}^{(t)}_{(\Theta,H)}$ is itself random, as it depends on the $t$-th power of the interaction matrix transpose $A^{\top}_{(\Theta,H)}$. This follows from the identity
$\Sigma^{(t)}_{(\Theta,H)}=(A^{\top}_{(\Theta,H)})^{t}\Sigma^{(0)}_{(\Theta,H)}$ (Proposition~\ref{prop:t-laggedCov})
and the fact that the diagonal and off-diagonal entries of the synchronous covariance matrix  have different scales. To better understand this, write $\Sigma^{(0)}_{(\Theta,H)}={\rm Diag}(v_{(\Theta,H)}(X_0))+d_0(\Sigma^{(0)}_{(\Theta,H)})$ and note that, on one hand,
the off-diagonal part is of order $n^{-1}$ by Theorem \ref{thm:convergence_cov}, so that $(A^{\top})^{t}$ acting on it may be replaced
by its mean at negligible cost. By doing so, we obtain the deterministic block matrix
$\tfrac1n\sum_{a,b}(W^{t}\Psi)_{ab}\mathbf 1_{C_a}\mathbf 1_{C_b}^{\top}$. On the other hand, the
diagonal part is of order $1$ (see Corollary~\ref{cor:convergence_variance}), and the same
replacement in $(A^{\top})^{t}{\rm diag}(v_{(\Theta,H)}(X_0))$ would lead to an error of order
$\sqrt{\log(n)/n}$,   much larger than the $\sqrt{\log(n)/n^3}$ rate of Theorem~\ref{thm:gen:convergence_cov_lag}.
The proof of Theorem \ref{thm:gen:convergence_cov_lag} is given in Appendix \ref{sec:asym:approx:lagged:cov:matrices}.

\subsection{Consequences for community identifiability}
\label{sec:detection}

The results of Section~\ref{sec:main:proba:results} describe how the community structure of the
interaction network is encoded in the first and second order statistics of the firing 
process, as the size of the network $n\to\infty$.  In this section, we focus on the converse direction, describing how the communities can be identified from (the limit of) these statistics.While implementing community detection algorithms is beyond the scope of this work, these latter results provide the theoretical foundation for their future development within this framework. 

First, we consider the limit of the mean firing vector $\mathbb{E}_{(\Theta,H)}[X_0]$ given by the $n$-dimensional vector $\overline{m}$ defined in \eqref{def:mean:field:approximation:for:the:mean}. 
Recall that the vector $\overline{m}$ is defined in terms of the $k$-dimensional vector $\zeta$ given by \eqref{def:limit:vector:firing:probabilities}. 
Clearly, in the case where all entries of $\zeta$ are distinct, we can correctly identify the $k$ communities of the model from the vector $\overline{m}$, up to permutation.  
On the other extreme, when all entries of $\zeta$ are equal, the vector $\overline{m}$ carries no information about the model communities. In the lemma below, we identify a necessary and sufficient condition, depending only on the parameters $(\alpha_{b})_{b\in [k]}$, $(\mu_{b})_{b\in [k]}$, $(p_{b})_{b\in [k]}$, $\Out$ and $q$, under which all entries of $\zeta$ have the same value. Its proof is presented in Appendix \ref{app:proof:sec:community:ident}.

\begin{lemma}
\label{lemma:identificability:through:the:mean}
Suppose condition \eqref{eq:wellposed_random_scenario} is granted and consider $\zeta= {\cal G}(W)\mathbf{1}_{k}$. Then, all entries of $\zeta$ are equal if and only if 
all entries of $W\mathbf{1}_k$ are equal. In this case, for $b\in [k]$, we have
\[
\zeta_b = \frac{1}{1- (W\mathbf{1}_k)_1}=\frac{1}{1- (2\beta -1)(\mu_1 \alpha_1 p_1 + \Out q (1-\alpha_1))}\,.
\] 
\end{lemma}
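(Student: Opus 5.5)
The plan is to use the defining identity $(I_k-W)\zeta=\mathbf{1}_k$, which holds because $\zeta={\cal G}(W)\mathbf{1}_k$ and $I_k-W$ is invertible under \eqref{eq:wellposed_random_scenario}. Rewriting it as
\[
\zeta = \mathbf{1}_k + W\zeta
\]
turns both implications into a short algebraic check. No probabilistic input is needed; the lemma concerns only the deterministic $k\times k$ matrix $W$ from \eqref{def:matrix_G}.

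For the direction ($\Rightarrow$), suppose $\zeta=c\mathbf{1}_k$ for some scalar $c$. The identity then gives $c\mathbf{1}_k=\mathbf{1}_k+cW\mathbf{1}_k$, that is, $cW\mathbf{1}_k=(c-1)\mathbf{1}_k$. We must check that $c\neq 0$, and this holds because $c\mathbf{1}_k=\zeta={\cal G}(W)\mathbf{1}_k$ with ${\cal G}(W)$ invertible, so $\zeta\neq 0$. Dividing by $c$ shows $W\mathbf{1}_k=\frac{c-1}{c}\mathbf{1}_k$, so all entries of $W\mathbf{1}_k$ are equal.

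For the direction ($\Leftarrow$), suppose $W\mathbf{1}_k=r\mathbf{1}_k$. Then $W^u\mathbf{1}_k=r^u\mathbf{1}_k$ for every $u\geq 0$. We have $|r|\leq\vertiii{W}_{\infty}\leq M<1$, so the Neumann series ${\cal G}(W)=\sum_{u\ge0}W^u$ gives $\zeta=\sum_u r^u\mathbf{1}_k=\frac{1}{1-r}\mathbf{1}_k$. Alternatively, one can directly verify that $(I_k-W)\frac{1}{1-r}\mathbf{1}_k=\mathbf{1}_k$ and then invoke uniqueness. The same formula follows in the forward direction, since $r=(c-1)/c$ gives $c=1/(1-r)$.

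The explicit value comes from computing the first entry of $W\mathbf{1}_k$:
\[
(W\mathbf{1}_k)_1=\sum_b W_{1b}=(2\beta-1)\Big(\mu_1\alpha_1p_1+\Out q\sum_{b\neq1}\alpha_b\Big)=(2\beta-1)\big(\mu_1\alpha_1p_1+\Out q(1-\alpha_1)\big),
\]
where the last step uses $\sum_b\alpha_b=1$.

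None of these steps is a real obstacle. The only points needing care are the nonvanishing of $c$ in the forward direction and the bound $|r|<1$, which is needed both for the series and for $1-r\neq0$.
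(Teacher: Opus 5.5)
Your proposal is correct and follows the paper's proof: the Neumann series with $W^u\mathbf{1}_k=r^u\mathbf{1}_k$ for one direction, and the fixed-point identity $\zeta=\mathbf{1}_k+W\zeta$ for the other. The differences are cosmetic. You justify $c\neq 0$ through the invertibility of ${\cal G}(W)$, whereas the paper asserts $c>0$ from condition \eqref{eq:wellposed_random_scenario}. You also write out the computation of $(W\mathbf{1}_k)_1$, which the paper leaves implicit.
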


\begin{remark}
\label{rmk:1}
One can easily check that the vector $W\mathbf{1}_k$ has all entries equal to each other if and only if 
$\alpha_{b}(\mu_{b}p_{b}-\Out q)=\alpha_{c}(\mu_{c}p_{c}-\Out q)$
for all $b,c\in[k].$ Moreover, if the communities are balanced (i.e., $\alpha_b=1/k$ for all $b \in [k]$) the latter condition reduces to $\mu_b p_b =\mu_a p_a$ for all $\, b \in [k]$.
\end{remark}

Lemma \ref{lemma:identificability:through:the:mean} ensures that when $W\mathbf{1}_k$ has some entries with different values, we certainly must find a pair $(b,c)\in [k]\times [k]$, with $b\neq c$, for which the values of $\zeta_{b}$ and $\zeta_{c}$ are different. In this case, 
by studying the coordinates of $\overline{m}$ we can then partition the set of neurons into at least two different classes. Note that Lemma \ref{lemma:identificability:through:the:mean} does not exclude the possibility of still finding entries of $\zeta$ with equal value. Of course, the communities of these coordinates cannot be distinguished by the vector of means.

In the remaining of this section, we focus on the case in which the value of all entries of the vector $\zeta$ is the same.
According to Lemma \ref{lemma:identificability:through:the:mean} and Remark \ref{rmk:1},
the entries of $\zeta$ have the same value if and only if the parameters $(\alpha_{b})_{b\in [k]}$, $(\mu_{b})_{b\in [k]}$, $(p_{b})_{b\in [k]}$, $\Out$ and $q$ satisfy the following condition:
\begin{equation}
 \label{def:kappa}
 \alpha_{1}(\mu_1p_1-\Out q)=\ldots=\alpha_{k}(\mu_kp_k-\Out q)=:\kappa\,.
\end{equation}
Hence,  we shall assume that $\kappa \neq 0$ in what follows. As mentioned above, in such a case the vector of asymptotic firing means $\overline{m}$ carries no information about the communities.  
Interestingly, going to second order statistics such as the synchronous firing covariance we still find enough information for distinguishing the communities, up to permutation. 
This is a consequence of the proposition below. Before stating it, let us introduce some notation. For every integer $t\geq 0$, define the  matrices  
\begin{equation}\label{def:tlagged:block:bias:matrices}
\overline{S}^{(t)}:=\frac{(2\beta-1)^{t}\kappa^{t}\gamma}{n}\sum_{a\in[k]}\alpha_a^{-1}\,\mathbf{1}_{C_a}\mathbf{1}_{C_a}^{\top}\,
\ \text{and} \  
\overline{B}^{(t)}:=\frac{(2\beta-1)^{t}\big(\gamma(M^{t}-\kappa^{t})+\rho_{\nsim}M^{t}\big)}{n}\,\mathbf{1}_{n}\mathbf{1}_{n}^{\top}\,,
\end{equation}
where the real values $\gamma$ and $\rho_{\nsim}$ are given by 
\begin{equation}\label{def:gamma:rhoout}
\gamma:=((2\beta-1)\kappa)^2(1-(2\beta-1)^2\kappa^2)^{-1} \ \text{and} \ \rho_{\nsim}=\gamma((M/\kappa)^2-1)(1-(2\beta-1)^2M^2)^{-1},     
\end{equation}
with $M$ defined in \eqref{def:M}.
Note that the matrix $\overline{S}^{(t)}$ has a community structure that resembles that of the underlying random graph $\Theta$, whereas the matrix $\overline{B}^{(t)}$ have equal entries. Moreover, the entries of both matrices scale as $n^{-1}$.   

\begin{proposition}
\label{prop:limit:lagged:cov:kappa}
Suppose \eqref{eq:wellposed_random_scenario} and \eqref{def:kappa} are granted.  
For every integer $t\geq 0$, let $\overline{S}^{(t)}$ and $\overline{B}^{(t)}$ be the matrices defined in \eqref{def:tlagged:block:bias:matrices}. 
Then, the matrix $\overline{\Sigma}^{(0)}$ defined in \eqref{def:mean:field:approximation:for:the:simul:cov} can be decomposed as
\[
\overline{\Sigma}^{(0)}=\xi_1\big(\overline{S}^{(0)}+\overline{B}^{(0)}\big),
\]
with $\xi_1$ as defined in \eqref{def:xi:vector}.
Moreover, for every integer $t\geq 1$, the random matrix $\overline{\Sigma}^{(t)}_{\Theta,H}$ defined in \eqref{def:gen:Gamma:t} can be decomposed as
\[
\overline{\Sigma}^{(t)}_{(\Theta,H)}
=\xi_1\Big(\big(A^{\top}_{(\Theta,H)}\big)^{t}+\overline{S}^{(t)}+\overline{B}^{(t)}\Big)\,.
\]
\end{proposition}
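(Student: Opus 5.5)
The plan is a direct algebraic computation with $k\times k$ matrices. Under \eqref{def:kappa}, $W$ turns out to be a combination of two complementary projections. This makes $W^u$, and therefore $\Psi$ and $W^t\Psi$, explicit.

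\emph{Step 1: structure of $W$.} Write $c:=2\beta-1$, $J:=\mathbf{1}_k\mathbf{1}_k^{\top}$ and $P:=\mathbf{1}_k\alpha^{\top}$, where $\alpha=(\alpha_1,\ldots,\alpha_k)^{\top}$. Since $\alpha^{\top}\mathbf{1}_k=1$, we have $P^2=P$, so $P$ and $I_k-P$ are complementary projections. Rewrite \eqref{def:matrix_G} as
\[
W_{ab}=c\big(\alpha_b(\mu_bp_b-\Out q)\mathbbm{1}_{\{a=b\}}+\Out q\,\alpha_b\big).
\]
By \eqref{def:kappa} this gives $W=c(\kappa I_k+\Out q P)$. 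Each quantity inside the maximum in \eqref{def:M} equals $\kappa+\Out q$, so $M=\kappa+\Out q$. Hence
\[
W=c\big(\kappa(I_k-P)+MP\big),\qquad W^u=c^u\big(\kappa^u(I_k-P)+M^uP\big)\ \text{ for all } u\geq 0 .
\]

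\emph{Step 2: consequences for $\zeta$, $\xi$ and $\overline{v}$.} By Lemma \ref{lemma:identificability:through:the:mean} and Remark \ref{rmk:1}, all entries of $\zeta$ are equal. Therefore $\xi_b=\xi_1$ for every $b$, and $\overline{v}=\xi_1\mathbf{1}_n$, so ${\rm diag}(\overline{v})=\xi_1 I_n$. Setting $D':={\rm diag}(\alpha^{-1})$, we get ${\rm diag}(\xi\odot\alpha^{-1})=\xi_1 D'$. The key identities are
\[
PD'=J,\qquad D'P^{\top}=J,\qquad PD'P^{\top}=J\,(\alpha^{\top}D'\alpha)=J,\qquad PJ=J.
\]
From these,
\[
(I-P)D'(I-P)^{\top}=D'-J,\qquad (I-P)D'P^{\top}=0,\qquad (I-P)J=0 .
\]

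\emph{Step 3: computing $\Psi$.} Expanding with Step 1 and using that the cross terms vanish,
\[
W^uD'(W^{\top})^u=c^{2u}\big(\kappa^{2u}(D'-J)+M^{2u}J\big).
\]
Summing the geometric series over $u\geq1$ in \eqref{def:Psi} (convergence holds since $|c\kappa|\le |cM|<1$) gives
\[
\Psi=\xi_1\big(\gamma D'+\rho_{\nsim}J\big).
\]
Here $\gamma$ is as in \eqref{def:gamma:rhoout}, and we must check that
\[
\frac{c^2M^2}{1-c^2M^2}-\gamma=\frac{c^2(M^2-\kappa^2)}{(1-c^2\kappa^2)(1-c^2M^2)}=\rho_{\nsim}.
\]
Next, $\frac1n\sum_{a,b}(D')_{ab}\mathbf{1}_{C_a}\mathbf{1}_{C_b}^{\top}=\frac1n\sum_a\alpha_a^{-1}\mathbf{1}_{C_a}\mathbf{1}_{C_a}^{\top}$ and $\frac1n\sum_{a,b}J_{ab}\mathbf{1}_{C_a}\mathbf{1}_{C_b}^{\top}=\frac1n\mathbf{1}_n\mathbf{1}_n^{\top}$. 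Comparing with \eqref{def:tlagged:block:bias:matrices} at $t=0$, where $M^0-\kappa^0=0$, yields $\overline{\Sigma}^{(0)}=\xi_1(\overline{S}^{(0)}+\overline{B}^{(0)})$.

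\emph{Step 4: the lagged case $t\ge1$.} Using Step 2,
\[
W^t\Psi=\xi_1c^t\big(\kappa^t(I-P)+M^tP\big)(\gamma D'+\rho_{\nsim}J)=\xi_1c^t\big(\kappa^t\gamma D'+(\gamma(M^t-\kappa^t)+\rho_{\nsim}M^t)J\big).
\]
Lifting to $n\times n$ block matrices as in Step 3 gives $\xi_1(\overline{S}^{(t)}+\overline{B}^{(t)})$. Finally, in \eqref{def:gen:Gamma:t} the first term is $(A^{\top})^t\,{\rm diag}(\overline{v})=\xi_1(A^{\top})^t$, which gives the stated decomposition.

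The only delicate point is recognizing the projection structure $W=c(\kappa(I-P)+MP)$, including the fact that $M=\kappa+\Out q$ (the maximum in \eqref{def:M} is attained by every $b$), together with the identity $PD'P^{\top}=J$. After that, everything is bookkeeping of geometric series and a check of the constants $\gamma$ and $\rho_{\nsim}$.
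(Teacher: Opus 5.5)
Your proof is correct. For the key step, the closed form of $\Psi$, it takes a genuinely different route from the paper.

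For the lagged part the two arguments agree in substance. The paper proves by induction that $W^{t}{\rm diag}(\alpha^{-1})=c^{t}\kappa^{t}{\rm diag}(\alpha^{-1})+c^{t}(M^{t}-\kappa^{t})\mathbf{1}_k\mathbf{1}_k^{\top}$ (Claim \eqref{eq:Wt:diag:kappa}). This is your formula $W^{t}=c^{t}(\kappa^{t}(I_k-P)+M^{t}P)$ multiplied on the right by $D'$.

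For $\Psi$, the paper instead goes through Proposition~\ref{prop:properties:limit:cov:matrix}:
it uses \eqref{eq:gen:commute} to write $\Psi=\xi_1\mathcal{G}_{\geq 1}(W^{2})D'$;
it inverts $W^{2}D'$ by Sherman--Morrison;
it computes $(\mathcal{G}_{\geq 1}(W^{2})D')^{-1}=(W^{2}D')^{-1}-{\rm diag}(\alpha)$ and inverts this by Sherman--Morrison a second time.
That route needs $\kappa\neq 0$, invertibility of $W^{2}$, and non-vanishing Sherman--Morrison denominators.

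Your decomposition with the idempotent $P=\mathbf{1}_k\alpha^{\top}$, together with $(I_k-P)D'P^{\top}=0$, reduces everything to two scalar geometric series with no inversion at all. In effect it is what one obtains by summing the paper's own Claim over $t=2u$, a shortcut the paper does not take. What your route buys:
it gives $\rho_{\nsim}=c^{2}(M^{2}-\kappa^{2})/((1-c^{2}\kappa^{2})(1-c^{2}M^{2}))$, which makes sense even at $\kappa=0$;
it exposes the spectral structure behind the split into $\overline{S}^{(t)}$ and $\overline{B}^{(t)}$, since $W$ acts as $cM$ on $\mathbf{1}_k$ and as $c\kappa$ on $\ker\alpha^{\top}$.
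The paper's route reuses identities already set up for the general case, but is heavier.

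One line is missing: a justification of your parenthetical $|c\kappa|\le|cM|$, which is not obvious when $\kappa<0$. For $k\ge 2$, any nonzero $x$ with $\alpha^{\top}x=0$ satisfies $Wx=c\kappa x$. Hence $|c\kappa|\le\rho(W)\le\vertiii{W}_{\infty}=|c|M<1$. For $k=1$ the $\kappa$-terms are multiplied by $D'-J=0$, so their convergence is irrelevant. Alternatively, you can simply cite Lemma~\ref{lem:Psi:well:defined}. Then use that $D'-J$ and $J$ are linearly independent for $k\ge 2$, so each scalar series must converge separately.
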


Proposition \ref{prop:limit:lagged:cov:kappa} is proved in Appendix \ref{app:proof:sec:community:ident}. 
Note that this proposition tells us that, under Condition \eqref{def:kappa}, the synchronous firing covariance is (asymptotically as $n\to\infty$) smaller between neurons in different communities ($n^{-1}\xi\rho_{\nsim}$) than that between neurons in the same community $C_{b}$ ($n^{-1}\xi(\gamma\alpha^{-1}_{b}+\rho_{\nsim})$) for all $b\in [k]$.
As a consequence, the synchronous firing covariance carries  enough information asymptotically for discriminating the communities $C_{1},\ldots, C_{k}$ up to permutation. If, additionally, the sizes of the communities $\alpha_1,\ldots, \alpha_k$ are known, it also allows us to exactly find all the communities.

Proposition \ref{prop:limit:lagged:cov:kappa} also tells us that the $t$-lagged firing covariance matrix  
can be decomposed (modulo the factor $\xi_1$) into the sum of three matrices: the $t$-th power of the interaction matrix  $A^{\top}$, the signal matrix $\overline{S}^{(t)}$ and the bias matrix $\overline{B}^{(t)}$. Note that interaction matrix $A^{\top}$ is random, whereas the matrices $\overline{S}^{(t)}$ and $\overline{B}^{(t)}$ are deterministic.
Note also that, unlike the bias matrix $\overline{B}^{(t)}$ in which all entries are equal ($\xi_1 n^{-1}(2\beta-1)^t(\gamma (M^t-\kappa^t)+\rho_{\nsim}M^t))$), the signal matrix $\overline{S}^{(t)}$ has a community structure that is inherited from the underlying random graph $\Theta$.  
Specifically, the value of the entry $(i,j) \in C_a\times C_b$ of the matrix $\overline{S}^{(t)}$ 
is
$\xi_1 n^{-1}(2\beta -1)^t\kappa^t\gamma \alpha^{-1}_a$, when $a=b$; whereas when  $a \neq b$, the value is $0$.

In classical community detection problems, the goal is to find the communities from a realization of the adjacency matrix of the underlying random graph model. In our framework, the adjacency matrix counterpart is the interaction matrix $A^{\top}$. Since this matrix is not observed in our setting, it cannot be directly used for finding the communities.
As Theorem \ref{thm:gen:convergence_cov_lag} and Proposition \ref{prop:limit:lagged:cov:kappa} show, the interaction matrix $A^{\top}$ is closely related to the the one-lag matrix $\Sigma_{(\Theta,H)}^{(1)}$, asymptotically as $n\to\infty$.
Moreover, the one-lag covariance matrix inherits the community structure of the random graph $\Theta$ not only through the interaction matrix $A^{\top}$ but also through the signal matrix 
$\overline{S}^{(1)}$. 
This suggests that detecting the communities from the one-lag matrix $\Sigma_{(\Theta,H)}$ may be easier than when relying solely on $A^{\top}$. A rigorous proof of this conjecture is beyond the scope of the paper.

\section*{Acknowledgements}
G.I.'s research was partially supported by  FAPERJ (grant E-26/210.516/2024) and by the National Council for Scientific and Technological Development - CNPq  (grant 300150/2026-1).
G.O. was partially supported by the Serrapilheira Institute (grant number Serra – 2211-42049), FAPERJ (grant E-26/204.532/2024) and CNPq (grant 305591/2026-6).

\appendix

\section{Asymptotic approximation for the mean and variance}
\label{sec:appendix:asymp:app:mean:variance}

In this section, we prove Theorem ~\ref{thm:convergence_mean_version2} and Corollary \ref{cor:convergence_variance}.
We start proving Theorem ~\ref{thm:convergence_mean_version2}. In the proof, we rely on the following lemma which is proven in Appendix \ref{sec:tech:lemmas}.

\begin{lemma}\label{lem:1}
Suppose \eqref{eq:wellposed_random_scenario} is granted
and let $\zeta$ and $\overline{m}$ be the vectors defined in \eqref{def:limit:vector:firing:probabilities} and \eqref{def:mean:field:approximation:for:the:mean}, respectively.
Then, on the event $\{(\Theta, H)\in \Omega^{adm}\}$,  it holds that 
\[
\mathbb{E}_{(\Theta, H)}[X_{0}]= \overline{m}+\lambda \sum_{b \in [k]} \zeta_b{\cal G}(A^{\top})\epsilon^{(b)}\,, 
\]
where $\epsilon^{(b)}:=(A^\top - \mathbb{E}_{p,q,\beta}[A^\top]) \mathbf{1}_{C_b}$ for all $b\in [k]$.
\end{lemma}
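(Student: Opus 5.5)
We prove the identity on the event $\{(\Theta,H)\in\Omega^{adm}\}$ as a direct algebraic consequence of Proposition \ref{prop:mean} and the block structure of $\mathbb{E}^{(n)}[A^\top]$. By Proposition \ref{prop:mean}, on this event $\mathbb{E}_{(\Theta,H)}[X_0]={\cal G}(A^\top)\lambda\mathbf{1}_n$, with $I_n-A^\top$ invertible since $\vertiii{A^\top}_\infty<1$. The strategy is to show that $\overline{m}$ satisfies a perturbed fixed-point equation $(I_n-A^\top)\overline{m}=\lambda\mathbf{1}_n-\lambda\sum_b\zeta_b\epsilon^{(b)}$, and then to invert.

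\textbf{Step 1: action of $\mathbb{E}^{(n)}[A^\top]$ on block vectors.} First compute $\mathbb{E}^{(n)}[A^\top]\mathbf{1}_{C_b}$. For $i\in C_a$, we have $(A^\top\mathbf{1}_{C_b})_i=\sum_{j\in C_b}A_{ji}$. Since $\mathbb{E}^{(n)}[\Theta_{ji}H_{ji}]=(2\beta-1)p_a$ or $(2\beta-1)q$, this gives
\[
\mathbb{E}^{(n)}[(A^\top\mathbf{1}_{C_b})_i]=(2\beta-1)\Big(\mu_b\tfrac{|C_b|}{n}p_b\mathbbm{1}_{\{a=b\}}+\Out\tfrac{|C_b|}{n}q\mathbbm{1}_{\{a\neq b\}}\Big)=W_{ab}.
\]
One caveat concerns the diagonal term $j=i$. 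If the model includes self-loops, this is exact. Otherwise it produces an $O(1/n)$ discrepancy, and one should check the paper's convention. I would take $\Theta_{ii}$ to be Bernoulli as stated, so the identity is exact. Hence $\mathbb{E}^{(n)}[A^\top]\mathbf{1}_{C_b}=\sum_{a}W_{ab}\mathbf{1}_{C_a}$.

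\textbf{Step 2: fixed-point relation for $\overline{m}$.} Using Step 1,
\[
\mathbb{E}^{(n)}[A^\top]\overline{m}=\lambda\sum_a\Big(\sum_b W_{ab}\zeta_b\Big)\mathbf{1}_{C_a}=\lambda\sum_a(W\zeta)_a\mathbf{1}_{C_a}.
\]
Since $\zeta=(I_k-W)^{-1}\mathbf{1}_k$, we have $W\zeta=\zeta-\mathbf{1}_k$. Therefore
\[
\mathbb{E}^{(n)}[A^\top]\overline{m}=\overline{m}-\lambda\mathbf{1}_n,
\]
because $\sum_a\mathbf{1}_{C_a}=\mathbf{1}_n$. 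In other words, $(I_n-\mathbb{E}^{(n)}[A^\top])\overline{m}=\lambda\mathbf{1}_n$.

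\textbf{Step 3: perturbation and inversion.} Write $A^\top=\mathbb{E}^{(n)}[A^\top]+(A^\top-\mathbb{E}^{(n)}[A^\top])$. Then
\[
(I_n-A^\top)\overline{m}=\lambda\mathbf{1}_n-(A^\top-\mathbb{E}^{(n)}[A^\top])\overline{m}=\lambda\mathbf{1}_n-\lambda\sum_b\zeta_b\epsilon^{(b)}.
\]
Multiplying by ${\cal G}(A^\top)$ gives
\[
\overline{m}={\cal G}(A^\top)\lambda\mathbf{1}_n-\lambda\sum_b\zeta_b{\cal G}(A^\top)\epsilon^{(b)}.
\]
Rearranging and using Proposition \ref{prop:mean} yields the claim.

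The only delicate point is the bookkeeping in Step 1: the transpose and index convention ($A_{ji}$ is the weight from $j$ to $i$), and the treatment of diagonal entries. Everything else is linear algebra.
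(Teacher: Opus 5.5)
Your proof is correct and is essentially the paper's argument. Step 3 is exactly the resolvent identity with $B=I_n-A^{\top}$ and $C=I_n-\mathbb{E}^{(n)}[A^{\top}]$, applied to $\lambda\mathbf{1}_n$. Step 2 proves the paper's Claim ${\cal G}(\mathbb{E}^{(n)}[A^{\top}])\mathbf{1}_n=\sum_b\zeta_b\mathbf{1}_{C_b}$ by checking the fixed-point equation through $W\zeta=\zeta-\mathbf{1}_k$, instead of summing the Neumann series with all powers from Lemma~\ref{lem:EAt:power:1Ca}. This is slightly more economical: it needs only the $u=1$ case and does not use invertibility of $I_n-\mathbb{E}^{(n)}[A^{\top}]$.

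Your self-loop caveat resolves in your favor. The paper takes every $\Theta_{ij}$, including $i=j$, to be Bernoulli, which matches the block formula for $\mathbb{E}^{(n)}[A]$ in Lemma~\ref{lem:gen:EAtranspose}, so Step 1 is exact.
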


By Lemma \ref{lem:random_NFC_welldefined}, the proof of  
Theorem~\ref{thm:convergence_mean_version2} can be reduced to the case the event $\{(\Theta, H)\in \Omega^{adm}\}$ holds.  
Hence, given Lemma \ref{lem:1}, the main step in the proof of Theorem~\ref{thm:convergence_mean_version2} consists in showing that the supremum norm of all random vectors 
$\epsilon^{(b)}$,
$b\in [k]$ is $O(\sqrt{\log(n/\delta)/n})$ with probability at least $1-\delta$. Once this established, we then show that 
the same must hold for the vectors 
${\cal G}(A^{\top})\epsilon^{(b)}$ on $\Omega^{adm}$. 


\begin{proof}[Proof on Theorem~\ref{thm:convergence_mean_version2}]
Throughout the proof, we denote $\mathcal{E}=\{(\Theta,H)\in\Omega^{adm}\}$.
With this notation, observe that  for all $\tau>0$ we can write,
\begin{equation}
\label{eq:decomposition:proba}
\mathbb{P}^{(n)}\big(\|\mathbb{E}_{(\Theta,H)}-\overline{m}\|_{\infty}\geq \tau \big)\leq \mathbb{P}^{(n)}\big({\cal E}^c\big)+\mathbb{P}^{(n)}\big({\cal E}\cap \{\|\mathbb{E}_{(\Theta,H)}-\overline{m}\|_{\infty}\geq \tau\} \big).     
\end{equation}
By Lemma \ref{lem:random_NFC_welldefined}, the first term on the right-hand side of \eqref{eq:decomposition:proba} is bounded from above by $2ne^{-Cn}$. 

Hence, to conclude the proof, it suffices to show that the second term on the right of  \eqref{eq:decomposition:proba} is smaller than $\delta$, for $\tau=2 k \lambda \Vert \zeta\Vert_{\infty} R \sqrt{(2\alpha_{\rm min})/n\log\Big((2 k n)/\delta\Big)}$. This can be done as follows. 
By defining $w_{ij}= \theta_{ij}\eta_{ij} - (2\beta -1)p_b$ if $i \overset{b}{\sim} j$ for some $b \in [k]$, and  $w_{ij}= \theta_{ij}\eta_{ij} - (2\beta -1)q$ if  $i \nsim j$, we obtain that, for every $i \in [n]$, 
\[
\epsilon^{(b)}_i= \sum_{j \in C_b} \frac{w_{ji}}{n} \big(\mu_b \mathbbm{1}_{\{i \in C_b\}} + \Out\mathbbm{1}_{\{i \notin C_b\}}\big)\,. 
\]
Note that, for every fixed $i \in [n]$, the random variables 
$(w_{ji})_{j \in C_b}$ are independent, centered, and $|w_{ji}(\mu_b \mathbbm{1}_{\{i \in C_b\}} + \Out\mathbbm{1}_{\{i \notin C_b\}})|\leq 2 \max_{a \in [k]} |\mu_a|\vee |\Out|$. Therefore, by union bound and Hoeffding's inequality we obtain that, for all $z>0$,
\begin{align*}
&\mathbb{P}^{(n)}(\max_{b \in [k] } \Vert \epsilon^{(b)} \Vert_{\infty}>z)\leq \sum_{b \in [k]} \mathbb{P}^{(n)}(\Vert \epsilon^{(b)} \Vert_{\infty}>z)\leq  \sum_{b \in [k]} \sum_{i\in [n]} \mathbb{P}^{(n)}(|\epsilon^{(b)}_i| >z)
\\& \leq \sum_{\ell \in [b]} \sum_{i\in [n]} \mathbb{P}^{(n)}\Big( \Big|\sum_{j \in C_b} w_{ji} \big(\mu_\ell \mathbbm{1}_{\{i \in C_b\}} + \Out\mathbbm{1}_{\{i \notin C_b\}}\Big)  \Big|>n z \big)
\\
&\leq  \sum_{b \in [k]} \sum_{i\in [n]} 2 \exp{\Big\{-\frac{2z^2 n^2}{4 R^2 |C_b|}\Big\}}\leq 2nk \exp{\Big\{-\frac{z^2 n}{2 R^2 \alpha_{min} }\Big\}}  \,, 
\end{align*}
where $R=\max_{b \in [k]} |\mu_b|\vee |\Out|$ and $\alpha_{ min}= \min_{b \in [k]}\alpha_b>0$. In particular, for every $\delta \in (0,1]$, by taking $z= R \sqrt{\frac{2\alpha_{min}}{n}\log \big( \frac{2nk}{\delta}\big)}$, we conclude that 
\begin{equation}\label{eq:bound_err}
\mathbb{P}^{(n)}\bigg(\max_{b \in [k] } \Vert \epsilon^{(b)} \Vert_{\infty}\leq R \sqrt{\frac{2\alpha_{min}}{n}\log \Big( \frac{2n k}{\delta}\Big)} \bigg)\geq 1-\delta\,.    \end{equation}
  
By Lemma~\ref{lem:1}, on the event $\mathcal{E}$, we have that 
\begin{align*}
\mathbb{E}_{(\Theta,H)}[X_{0}] -  \overline{m}  =  \lambda \sum_{b \in [k]}\zeta_b {\cal G}(A^{\top}) \epsilon^{(b)}
\,.    
\end{align*}
Thus, by the triangle inequality, on the event $\mathcal{E}$, which assure that $\vertiii{A^\top}_{\infty}<1/2$, we obtain
\begin{align}
 &\Big\Vert \mathbb{E}_{(\Theta,H)}[X_{0}] -  \overline{m} \Big\Vert_{\infty} 
 %
 %
 \leq \lambda \sum_{b \in [k]} \big\Vert\zeta_b 
 {\cal G}(A^{\top})\epsilon^{(b)}\big\Vert_{\infty}
\leq \lambda \Vert \zeta \Vert_{\infty} \, \big\Vert \sum_{b \in [k]} 
{\cal G}(A^{\top})\epsilon^{(b)}\big\Vert_{\infty}
\nonumber \\
&
%
%
\leq \lambda \Vert \zeta \Vert_{\infty} \sum_{b \in [k]}  \sum_{u=0}^\infty \big\Vert(A^\top)^u \epsilon^{(b)}\big\Vert_{\infty} 
\leq \lambda \Vert \zeta \Vert_{\infty} \sum_{b \in [k]}  \sum_{u=0}^\infty \vertiii{(A^\top)^u}_{\infty} \Vert \epsilon^{(b)}\Vert_{\infty} 
\nonumber\\
&
\overset{(*)}{\leq}\lambda \Vert \zeta \Vert_{\infty} \sum_{b \in [k]} \Vert \epsilon^{(b)}\Vert_{\infty}   \sum_{u=0}^\infty \vertiii{A^\top}_{\infty}^u 
\leq 2 \lambda \Vert \zeta \Vert_{\infty} k  \max_{b\in [k]}\Vert \epsilon^{(b)}\Vert_{\infty} 
\,,
\label{eq:bound:err:mean:bar:m}
\end{align}
where in $(*)$ we used the sub-multiplicativity of the matrix norm $\vertiii{\cdot}_{\infty}$.
Combining the above bound with \eqref{eq:bound_err}, we 
deduce that the second term on the right of  \eqref{eq:decomposition:proba} is smaller than $\delta$, for $\tau=2 k \lambda \Vert \zeta\Vert_{\infty} R \sqrt{(2\alpha_{\rm min})/n\log\Big((2 k n)/\delta\Big)}$, concluding the proof. 
\end{proof}

Given Theorem ~\ref{thm:convergence_mean_version2}, the proof of Corollary \ref{cor:convergence_variance} is straightforward. 

\begin{proof}[Proof of Corollary \ref{cor:convergence_variance}]
First, use that 
\begin{align*}
\| v_{(\Theta,H)(X_0)}-\overline{m}\odot (\mathbf{1}_n-\overline{m})\|_{\infty}&=\max_{i\in [n]}|(\mathbb{E}_{(\Theta,H)}[X_{i,0}]-\overline{m}_i)(1-\overline{m}_i-\mathbb{E}_{(\Theta,H)}[X_{i,0}])|\\
&\leq \| \mathbb{E}_{(\Theta,H)}[X_0]-\overline{m}\|_{\infty}\,,
\end{align*}
and then use Theorem ~\ref{thm:convergence_mean_version2} to conclude.    
\end{proof}

To conclude this section, we now prove Lemma~\ref{lem:1}. The proof relies on the so-called {\it resolvent identity}, which ensures that   
\begin{equation}\label{resolvent:identity}
B^{-1}-C^{-1}=B^{-1}(C-B)C^{-1}\,,    
\end{equation}
for all invertible $n\times n$ matrices $B$ and $C$.



\begin{proof}[Proof of Lemma~\ref{lem:1}]
First, observe that the matrix $(I_n-A^{\top})$ is invertible on $\Omega^{adm}$ and that the matrix $(I_n-\mathbb{E}^{(n)}[A^{\top}])$ is also invertible since \eqref{eq:wellposed_random_scenario} is granted. Applying the resolvent identity \eqref{resolvent:identity} with $B=(I_n-A^{\top})$ and $C=(I_n-\mathbb{E}^{(n)}[A^{\top}])$ so that $B^{-1}={\cal G}(A^{\top})$, $C^{-1}={\cal G}(\mathbb{E}^{(n)}[A^{\top}])$ and $C-B=A^{\top}-\mathbb{E}^{(n)}[A^{\top}]$, we then deduce that on  $\Omega^{adm}$,
\begin{equation}\label{eq1:proof:lem:1}
({\cal G}(A^{\top})-{\cal G}(\mathbb{E}^{(n)}[A^{\top}]))\mathbf{1}_n={\cal G}(A^{\top})(A^{\top}-\mathbb{E}^{(n)}[A^{\top}]){\cal G}(\mathbb{E}^{(n)}[A^{\top}])\mathbf{1}_n\,.   
\end{equation}

\noindent
{\bf Claim.} The matrix ${\cal G}(\mathbb{E}^{(n)}[A^{\top}])$ satisfies
\begin{equation*}
{\cal G}(\mathbb{E}^{(n)}[A^{\top}])\mathbf{1}_n=\sum_{b\in [k]}\zeta_b\mathbf{1}_{C_b }\,.     
\end{equation*}

Combining \eqref{eq1:proof:lem:1} and the above Claim, we have that on $\Omega^{adm}$, 
\begin{align}\label{eq2:proof:lem:1}
{\cal G}(A^{\top})\mathbf{1}_n&=\sum_{b\in [k]}\zeta_b\mathbf{1}_{C_b }+{\cal G}(A^{\top})(A^{\top}-\mathbb{E}^{(n)}[A^{\top}])\Big(\sum_{b\in [k]}\zeta_b\mathbf{1}_{C_b}\Big)\nonumber\\
&=\sum_{b\in [k]}\zeta_b\mathbf{1}_{C_b }+\sum_{b\in [k]}\zeta_b{\cal G}(A^{\top})(A^{\top}-\mathbb{E}^{(n)}[A^{\top}])\mathbf{1}_{C_b}\nonumber\\
&=\sum_{b\in [k]}\zeta_b\mathbf{1}_{C_b }+\sum_{b\in [k]}\zeta_b{\cal G}(A^{\top})\epsilon^{(b)}\,.
\end{align}
The third equality follows directly from the definition of the ``error'' vectors $\epsilon^{(b)}$. By Proposition~\ref{prop:mean}, \eqref{def:mean:field:approximation:for:the:mean} and \eqref{eq2:proof:lem:1}, the proof follows. Hence, it only remains to prove the claim. This is done below.  

\medskip 
\noindent
{\bf Proof of the claim.} First, observe that  
$(\mathbb{E}^{(n)}[A^{\top}])^{u}\mathbf{1}_{C_a}=\sum_{c\in[k]}(W^{u})_{ca}\mathbf{1}_{C_c}$, for every $u\geq1$ and $a\in [k]$, by Lemma~\ref{lem:EAt:power:1Ca}. Next, 
sum over $a\in[k]$ and use that $\sum_{a\in[k]}\mathbf{1}_{C_a}=\mathbf{1}_n$, to obtain for every integer $u\geq 1$,  
\[
\big(\mathbb{E}^{(n)}[A^{\top}]\big)^{u}\mathbf{1}_n=\sum_{c\in[k]}(W^{u}\mathbf{1}_k)_c\,\mathbf{1}_{C_c}\,.
\]
Since this identity also holds for $u=0$ (both sides are equal to $\mathbf{1}_n$), and
$\sum_{u=0}^{\infty}(W^u\mathbf{1}_k)_c=\zeta_c$ for all $c\in [k]$ by~\eqref{def:limit:vector:firing:probabilities},
we deduce that
\begin{align*}
{\cal G}(\mathbb{E}^{(n)}[A^{\top}])\mathbf{1}_n
&=\sum_{u=0}^{\infty}\big(\mathbb{E}^{(n)}[A^{\top}]\big)^{u}\mathbf{1}_n=\sum_{u=0}^{\infty}\sum_{c\in[k]}(W^{u}\mathbf{1}_k)_c\,\mathbf{1}_{C_c}
\\
&=\sum_{c\in[k]}\Big(\sum_{u=0}^{\infty}(W^{u}\mathbf{1}_k)_c\Big)\mathbf{1}_{C_c}
=\sum_{c\in[k]}\zeta_c\,\mathbf{1}_{C_c}\,,    
\end{align*}
concluding the proof.

\end{proof}

\section{Asymptotic approximation for the synchronous covariance matrix}
\label{sec:asym:approx:syncronous:cov:matrix}

This section is devoted to the proof of Theorem \ref{thm:convergence_cov}. Recall that the $n^2\times n^2$ matrix $L$ is defined as
\begin{equation}\label{def:gen:L}
L_{uv}=\begin{cases}
0 & \text{if } u=n(i-1)+i, \text{ for } i\in[n]\,,\\[2pt]
(A^{\top}\otimes A^{\top})_{uv} & \text{otherwise}\,,
\end{cases}
\end{equation}
or, equivalently, 
\[
L_{ij,h\ell}=\mathbbm{1}_{\{i\neq j\}}A^{\top}_{j\ell}A^{\top}_{ih}
=\mathbbm{1}_{\{i\neq j\}}A_{\ell j}A_{hi}\,,\qquad\text{for all } i,j,\ell,h\in[n]\,.
\]

Hereafter, for $a,b\in[k]$, we denote
\begin{equation}\label{def:gen:Vab}
v^{(a,b)}:={\rm vec}\big(d_0(\mathbf{1}_{C_a}\mathbf{1}_{C_b}^{\top})\big).
\end{equation}
Note that $v^{(a,b)}_{ij}=(d_0(\mathbf{1}_{C_a}\mathbf{1}_{C_b}^{\top}))_{ij}=\mathbbm{1}_{\{i\neq j\}}(\mathbf{1}_{C_a}\mathbf{1}_{C_b}^{\top})_{ij}$,  for all $a,b\in [k]$ and $i,j\in [n]$.

In the proof of Theorem \ref{thm:convergence_cov}, we will use the following result.

\begin{proposition}\label{prop:gen:Psi}
Suppose \eqref{eq:wellposed_random_scenario} is granted. 
Let $\overline{v}$ be the vector defined in \eqref{def:mean:field:approx:for:variance} and let $\overline{\Sigma}^{(0)}$ and $L$ be the matrices defined in \eqref{def:mean:field:approximation:for:the:simul:cov} and \eqref{def:gen:L}, respectively. 
Then, for all $n\ge k$,  
\[
{\cal G}_{\geq 1}\big(\mathbb{E}^{(n)}[L]\big){\rm vec}({\rm diag}(\overline{v}))
=
{\rm vec}\Big(d_0(\overline{\Sigma}^{(0)})\Big)
+\frac{1}{n^2}\sum_{a\in[k]}\sum_{c\in[k]}E_{ac}\,v^{(a,c)}\,,
\]
where the $k\times k$ matrix $E=(E_{ac})_{a,c\in[k]}$ satisfies $\vertiii{E}_{max}\leq K$, for some positive constant $K$ which is independent of $n$.
\end{proposition}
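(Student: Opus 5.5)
The plan is to compute $\mathbb{E}^{(n)}[L]$ exactly and then iterate it on ${\rm vec}({\rm diag}(\overline{v}))$, keeping track of block-constant matrices at each step.

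\textbf{Step 1: structure of $\mathbb{E}^{(n)}[L]$.} Write $\bar A:=\mathbb{E}^{(n)}[A]$. For $i\neq j$, the two factors $A_{\ell j}$ and $A_{hi}$ in $L_{ij,h\ell}=\mathbbm{1}_{\{i\neq j\}}A_{\ell j}A_{hi}$ are distinct entries of $A$, since $(\ell,j)=(h,i)$ would force $i=j$. By independence of the entries, $\mathbb{E}^{(n)}[L]=P(\bar A^{\top}\otimes\bar A^{\top})$, where $P$ is the coordinate projection that kills the indices $u=n(i-1)+i$. Using ${\rm vec}(BDC)=(C^{\top}\otimes B){\rm vec}(D)$, this gives
\[
\mathbb{E}^{(n)}[L]\,{\rm vec}(X)={\rm vec}\big(\mathcal T(X)\big),\qquad \mathcal T(X):=d_0(\bar A^{\top}X\bar A).
\]
Moreover, by the definition of $W$, including the diagonal entries $A_{ii}$ (which have the same law as the other intra-community entries), we have the exact block form
\[
\bar A^{\top}=\frac1n\sum_{a,b}W_{ab}\alpha_b^{-1}\mathbf 1_{C_a}\mathbf 1_{C_b}^{\top}.
\]

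\textbf{Step 2: action of $\mathcal T$ on block matrices.} Let $\{Y\}:=\sum_{a,c}Y_{ac}\mathbf 1_{C_a}\mathbf 1_{C_c}^{\top}$ denote the block matrix built from a $k\times k$ matrix $Y$. Since $|C_b|=\alpha_bn$, a direct computation gives
\[
\bar A^{\top}\{Y\}\bar A=\frac1n\{WYW^{\top}\}\cdot n\cdot\frac1n\ \text{ (up to the scaling of } Y\text{)},\qquad \bar A^{\top}{\rm Diag}(\{Y\})\bar A=\frac1n\{W\,{\rm diag}(Y_{bb}\alpha_b^{-1})_b\,W^{\top}\}.
\]
Concretely, $\bar A^{\top}\big(\tfrac1n\{Y\}\big)\bar A=\tfrac1n\{WYW^{\top}\}$. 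Since $\overline v$ equals $\xi_b$ on $C_b$, the first step gives $\mathcal T({\rm diag}(\overline v))=d_0\big(\tfrac1n\{Z_1\}\big)$ with $Z_1=W{\rm diag}(\xi\odot\alpha^{-1})W^{\top}$.

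By induction, $\mathcal T^u({\rm diag}(\overline v))=d_0\big(\tfrac1n\{Z_u\}\big)$, where
\[
Z_{u+1}=W\Big(Z_u-\tfrac1n{\rm diag}\big((Z_u)_{bb}\alpha_b^{-1}\big)_b\Big)W^{\top}.
\]
In this recursion, removing the diagonal part ${\rm Diag}(\tfrac1n\{Z_u\})$ before multiplying by $\bar A^{\top}$ and $\bar A$ produces the $1/n$ correction term.

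\textbf{Step 3: error bounds (the main obstacle).} Set $Y_u:=W^u{\rm diag}(\xi\odot\alpha^{-1})(W^{\top})^u$, so that $\sum_{u\ge1}Y_u=\Psi$, and define $R_u:=n(Z_u-Y_u)$. Subtracting the recursions for $Z_u$ and $Y_u$ gives $R_1=0$ and
\[
R_{u+1}=WR_uW^{\top}-W{\rm diag}\big((Z_u)_{bb}\alpha_b^{-1}\big)W^{\top}.
\]
The difficulty is to obtain bounds that are summable in $u$ and uniform in $n$. For this I use $\vertiii{WXW^{\top}}_{max}\le\vertiii{W}_\infty^2\vertiii{X}_{max}\le M^2\vertiii{X}_{max}$. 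A naive bound on $Z_u$ would carry a factor $(1+\alpha_{min}^{-1}/n)^u$, which could spoil summability. Instead, I bound $Z_u$ directly through its own recursion, which yields $\vertiii{Z_u}_{max}\le c\,(M^2(1+\alpha_{min}^{-1}/n))^{u}$. Where $n$ is small enough that $M^2(1+\alpha_{min}^{-1}/n)\ge1$, I fall back on the trivial fact that only finitely many such $n$ exist, each with a finite (spectral-radius-controlled) series, and absorb them into $K$. Alternatively, I bound $Z_u-\frac1n{\rm diag}(\cdot)$ entrywise by exploiting that the diagonal subtraction is a contraction once $n\alpha_{min}\ge1$.

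Granting $\vertiii{Z_u}_{max}\le c\,\rho^{u}$ with some $\rho<1$ uniform in $n$ large, the recursion for $R_u$ gives $\vertiii{R_{u}}_{max}\le c'\,u\,\rho^{u}$. This is summable, so $E:=\sum_{u\ge1}R_u$ satisfies $\vertiii{E}_{max}\le K$.

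\textbf{Step 4: conclusion.} Summing over $u\ge1$ gives
\[
{\cal G}_{\geq1}(\mathbb{E}^{(n)}[L])\,{\rm vec}({\rm diag}(\overline v))=\sum_{u\ge1}{\rm vec}\Big(d_0\big(\tfrac1n\{Y_u\}+\tfrac1{n^2}\{R_u\}\big)\Big)={\rm vec}\big(d_0(\overline\Sigma^{(0)})\big)+\frac1{n^2}\sum_{a,c}E_{ac}v^{(a,c)},
\]
since $d_0(\overline\Sigma^{(0)})=d_0(\tfrac1n\{\Psi\})$ by the definition of $\overline\Sigma^{(0)}$ and $v^{(a,c)}={\rm vec}(d_0(\mathbf 1_{C_a}\mathbf 1_{C_c}^{\top}))$.
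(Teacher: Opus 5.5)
Your proof is correct. It follows the paper in Steps 1--2 and departs from it in Step~3.

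\textbf{Steps 1--2.} These reproduce the paper's Lemma~\ref{lem:gen:L} in block-matrix language. Your $Z_u$ is exactly the paper's $H^{(u)}$, since $W{\rm diag}(\xi)W{\rm diag}(\alpha^{-1})=W{\rm diag}(\xi\odot\alpha^{-1})W^{\top}$ by \eqref{eq:gen:commute}.

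\textbf{Step~3: how you and the paper extract the $1/n$ correction.}
\begin{itemize}
\item The paper sums the recursion into the fixed-point equation $\Phi=H^{(1)}+W\big(\Phi-\tfrac1n{\rm Diag}(\Phi){\rm diag}(\alpha^{-1})\big)W^{\top}$. It vectorizes this as a $k^2\times k^2$ system with operator $I_{k^2}-(W\otimes W)(I_{k^2}-\tfrac1n Q)$. It then applies the resolvent identity against $(I_{k^2}-W\otimes W)^{-1}$, which gives a closed-form $E$ and re-identifies $\Psi$ with the series \eqref{def:Psi}.
\item You instead compare the perturbed and unperturbed recursions term by term through $R_u=n(Z_u-Y_u)$, a discrete Duhamel argument.
\end{itemize}
Your route needs no Kronecker algebra and no invertibility check for the perturbed operator. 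Also, $\Psi=\sum_u Y_u$ is simply the definition, so nothing needs re-identifying. The paper's route buys an exact formula for $E$. The paper's resolvent identity is just the summed form of your term-wise comparison.

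\textbf{Minor corrections.}
\begin{itemize}
\item The first display of Step~2 is off by a factor $n$. It should read $\bar A^{\top}\{Y\}\bar A=\{WYW^{\top}\}$; your ``Concretely'' line is the correct statement.
\item Your worry about the factor $(1+\alpha_{\min}^{-1}/n)^u$, and the fallback to finitely many small $n$, is unnecessary. Since $n\alpha_{\min}\ge 1$ and $M<\lambda\wedge(1-\lambda)\le 1/2$, you have $M^2(1+\alpha_{\min}^{-1}/n)\le 2M^2<1/2$ for every admissible $n$, so that case never occurs.
\item Better still is your ``alternatively'', which is what the paper uses. Because $0\le 1-\tfrac{1}{n\alpha_a}\le 1$, the map $X\mapsto X-\tfrac1n{\rm Diag}(X){\rm diag}(\alpha^{-1})$ does not increase the max norm. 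Hence $\vertiii{Z_u}_{max}\le M^{2(u-1)}\vertiii{Z_1}_{max}$ for all $n$. Your recursion then gives $\vertiii{R_u}_{max}\le (u-1)\,\alpha_{\min}^{-1}\vertiii{Z_1}_{max}\,M^{2(u-1)}$, which is summable uniformly in $n$.
\item The Neumann expansion of ${\cal G}_{\geq 1}$ in Step~4 should be justified explicitly. It follows from $\vertiii{\mathbb{E}^{(n)}[L]}_{\infty}\le\vertiii{W}_{\infty}^2\le M^2<1$.
\end{itemize}
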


Before proving Proposition \ref{prop:gen:Psi}, we will present the proof of Theorem \ref{thm:convergence_cov}.

\begin{proof}[Proof of Theorem \ref{thm:convergence_cov}] 
 We begin as in the proof of Theorem~\ref{thm:convergence_mean_version2}. Denote  $\mathcal{E}=\{(\Theta,H)\in\Omega^{adm}\}$ and note that for all $\tau>0$ we can write, 
\begin{multline}
\label{eq:decomposition:proba2}
\mathbb{P}^{(n)}\Big(\Bigvertiii{ d_0(\Sigma_{(\Theta,H)}^{(0)})-d_0(\overline{\Sigma}^{(0)})}_{max}\geq \tau \Big)\leq \mathbb{P}^{(n)}\big({\cal E}^c\big)
\\
+\mathbb{P}^{(n)}\Big({\cal E}\cap \Big\{\Bigvertiii{ d_0(\Sigma_{(\Theta,H)}^{(0)})-d_0(\overline{\Sigma}^{(0)})}_{max}\geq \tau\Big\} \Big).     
\end{multline}
By Lemma \ref{lem:random_NFC_welldefined}, the first term on the right-hand side of \eqref{eq:decomposition:proba2} is bounded from above by $2ne^{-Cn}$.
Therefore, to conclude the proof, it remains to show that the second term on the right of  \eqref{eq:decomposition:proba2} is smaller than $\delta$ for $\tau=K\sqrt{\frac{\log(n/\delta)}{n^3}}$, where $K$ is a positive constant independent of $n$. This is done below.

In the rest of the proof, $K_1$ denotes a positive constant independent of $n$ that can change from line to line.
We start observing that from Proposition 
\ref{prop:cov:vec:form}, on the event ${\cal E}$, 
we can write ${\rm vec}\big( \Sigma^{(0)}_{(\Theta,H)} \big) = {\cal G}(L) {\rm vec}\big( {\rm diag}(v_{{\Theta,H}}(X_0))\big)$. 
 Moreover, using that
\begin{align*}
{\rm vec}(d_0(\Sigma^{(0)}_{(\Theta,H)}))&={\rm vec}(\Sigma^{(0)}_{(\Theta,H)}-{\rm diag}(v_{(\Theta,H)}(X_0)))\\
&={\rm vec}(\Sigma^{(0)}_{(\Theta,H)})-{\rm vec}({\rm diag}(v_{(\Theta,H)}(X_0)))
\,,
\end{align*}
we obtain that, on the event ${\cal E}$,  
\begin{align*}
{\rm vec}(d_0(\Sigma^{(0)}_{(\Theta,H)}))&={\cal G}(L) {\rm vec}\big( {\rm diag}(v_{{\Theta,H}}(X_0))\big)-{\rm vec}({\rm diag}(v_{(\Theta,H)}(X_0)))\\
&={\cal G}_{\geq 1}(L) {\rm vec}\big( {\rm diag}(v_{{\Theta,H}}(X_0))\big)\,.
\end{align*}
 
Next, using the definition of $L$, the definition of the vector $\overline{v}$ (recall \eqref{def:mean:field:approx:for:variance})  
and the triangle inequality, we have that
\begin{align*}
&\Big\Vert L\Big(\text{vec}({\rm diag}( v_{(\Theta,H)}(X_0))) -\text{vec}({\rm diag}(\overline{v}))\Big)\Big\Vert_{\infty} 
\\
 &= \max_{i,j \in [n^2]}\Big\vert  
\sum_{(h,\ell)\in [n]^2}L_{ij,h\ell}\Big({\rm vec(diag}(v_{(\Theta,H)}(X_0)))
- \text{vec}({\rm diag}(\overline{v}))\Big)_{h \ell} \Big\vert
\\
&=\max_{i,j \in [n^2]}\Big\vert  \mathbbm{1}_{\{i\neq j\}}\sum_{\ell\in [n]}A_{\ell i}A_{\ell j}\Big(\text{vec}({\rm diag}(v_{(\Theta,H)}(X_0))) - \text{vec}({\rm diag}(\overline{v}))\Big)_{\ell \ell} \Big\vert
\\
&= \max_{i,j \in [n^2]}\Big\vert  \mathbbm{1}_{\{i\neq j\}}\sum_{b\in [k]}\sum_{\ell\in C_b}A_{\ell i}A_{\ell j}\big(\mathbb{E}_{(\Theta,H)}[X_{\ell, 0}] (1-\mathbb{E}_{(\Theta,H)}[X_{\ell, 0}]) - \lambda\zeta_b(1-\lambda\zeta_b) \big) \Big\vert
\\
&\leq \big\Vert \mathbb{E}_{(\Theta,H)}[X_0]-\overline{m}\big\Vert_{\infty} \frac{R}{n}\vertiii{A^{\top}}_{\infty}\,,    
\end{align*}
where $R=\max_{b \in [k]} |\mu_b|\vee |\Out|$.
Combining the preceding inequality with inequalities \eqref{eq:bound:err:mean:bar:m} and \eqref{eq:bound_err}, we obtain that for every $\delta\in (0,1]$,
\begin{equation}
\label{eq0.5:proof:thm:convergence:cov}
\mathbb{P}^{(n)}\Big({\cal E}\cap \Big\{\Big\Vert L\Big(\text{vec}({\rm diag}(v_{(\Theta,H)}(X_0)))-\text{vec}({\rm diag}(\overline{v}))\Big)\Big\Vert_{\infty}\leq K_1\sqrt{n^{-3}\log(n/\delta)}\Big\}\Big)\geq 1-\delta\,.    
\end{equation}

Now, since $\vertiii{L}_{\infty}\leq (\vertiii{A^\top}_{\infty})^2$ and $\vertiii{A^\top}_{\infty}\leq 1/2$ on the event $\cal E$, we also have that 
\begin{multline*}
\big\Vert{\cal G}_{\geq 1}(L)\text{vec}({\rm diag}(v_{(\Theta,H)}(X_0)))-{\cal G}_{\geq 1}(L)\text{vec}({\rm diag}(\overline{v}))\big\Vert_{\infty}\leq \Bigvertiii{\sum_{u=0}^{\infty}L^u}_{\infty}\times \\\Big\Vert L\big(\text{vec}({\rm diag}(v_{(\Theta,H)}(X_0)))-\text{vec}({\rm diag}(\overline{v}))\big)\Big\Vert_{\infty}\leq 4/3\,\Big\Vert L\big(\text{vec}({\rm diag}(v_{(\Theta,H)}(X_0)))-\text{vec}({\rm diag}(\overline{v}))\big)\Big\Vert_{\infty}\,.    
\end{multline*}

Combining the above inequality with \eqref{eq0.5:proof:thm:convergence:cov}, it then follows that 
\begin{equation}
\label{eq0.75:proof:thm:convergence:cov}
\mathbb{P}^{(n)}\Big({\cal E}\cap \Big\{\Vert{\cal G}_{\geq 1}(L)\big(\text{vec}({\rm diag}(v_{(\Theta,H)}(X_0)))-\text{vec}({\rm diag}(\overline{v}))\big)\big\Vert_{\infty}\leq K_1\sqrt{n^{-3}\log(n/\delta)} \Big\}\Big)\geq 1-\delta\,.   \end{equation}

Having established \eqref{eq0.75:proof:thm:convergence:cov}, we now conclude. First, observe that by applying the resolvent identity with $B=I_{n^2}-L$ and $C=I_{n^2}-\mathbb{E}^{(n)}[L]$, we have 
\[
{\cal G}_{\geq 1}(L)\,{\rm vec}({\rm diag}(\overline{v}))
={\cal G}_{\geq 1}\big(\mathbb{E}^{(n)}[L]\big){\rm vec}({\rm diag}(\overline{v}))
+{\cal G}(L)\big(L-\mathbb{E}^{(n)}(L))\big){\cal G}\big(\mathbb{E}^{(n)}[L]\big){\rm vec}({\rm diag}(\overline{v}))\,.
\]
Hence, from above identity, \eqref{eq0.75:proof:thm:convergence:cov} and Proposition \ref{prop:gen:Psi} it suffices to show that for every $\delta \in (0,1]$, on the event $\mathcal E$
\begin{align*}
\Big\| \frac{1}{n^2}\sum_{a,c\in[k]}E_{ac} v^{(a,c)} + &{\cal G}(L)\big(L-\mathbb{E}^{(n)}(L))\big) \Big({\rm vec}({\rm diag}(\overline{v})) +  \frac{1}{n}\sum_{a,c\in [k]}\Psi_{ac}v^{(a,c)} +\frac{1}{n^2}\sum_{a,c\in[k]}E_{ac} v^{(a,c)} \Big) \Big\|_{\infty}
\\
&
\leq K_1\sqrt{\frac{\log(n/\delta)}{n^3}},
\end{align*}
with probability at least $1-\delta$.  
Recalling that $\vertiii{E}_{max}\leq K_1$ the first term on the RHS above can be absorbed into the error term.
Hence, to conclude the proof
it suffices to show that for every $\delta \in (0,1]$, 
\begin{equation}
\label{eq1:proof:thm:convergence:cov}
\mathbb{P}^{(n)}\Bigg({\cal E}\cap \Bigg\{\max_{a,b\in [k]}\big\Vert{\cal G}(L)\epsilon^{(a,b)}\big\Vert_{\infty} \leq K_1\sqrt{\frac{\log(n/\delta)}{n}}\Bigg\}\Bigg)\geq 1-\delta\, ,   
\end{equation}
where $\epsilon^{(a,b)}=(L-\mathbb{E}^{(n)}[L])v^{(a,b)}$ with $v^{(a,b)}$ as defined in \eqref{def:gen:Vab} and that 
\begin{equation}
\label{eq2:proof:thm:convergence:cov}
\mathbb{P}^{(n)}\Bigg({\cal E}\cap\big\Vert{\cal G}(L)(L-\mathbb{E}^{(n)}(L)){\rm vec}({\rm diag}(\overline{v}))\big\Vert_{\infty}\leq K_1\sqrt{\frac{\log(n/\delta)}{n^3}}\Bigg)\geq 1-\delta\, .
\end{equation}
To see that \eqref{eq1:proof:thm:convergence:cov} holds, we use Hoeffding's inequality and argue similarly as in the proof of Theorem \ref{thm:convergence_mean_version2}, to first show that   
$$
\max_{a,b\in [k]}\big\Vert\epsilon^{(a,b)}\big\Vert_{\infty} \leq K_1\sqrt{\frac{\log(n/\delta)}{n}}\,.
$$
Then combine the above inequality with the fact that, on the event ${\cal E}$, it holds that $\vertiii{{\cal G}(L)}_{\infty}\leq \sum_{u=0}^{\infty}(\vertiii{L}_{\infty})^{u}\leq 4/3$ .

 As for the inequality \eqref{eq2:proof:thm:convergence:cov}, we argue as follows.
First, observe that  
\begin{align*}
\big(L-\mathbb{E}^{(n)}(L)\big){\rm vec}({\rm diag}(\overline{v}))_{ij}&=\mathbbm{1}_{\{i\neq j\}}\sum_{a\in [k]}\xi_a\sum_{\ell\in C_{a}}(A_{\ell i}A_{\ell j}-\mathbb{E}^{(n)}[A_{\ell i}A_{\ell j}])\\
&=\mathbbm{1}_{\{i\neq j\}}\sum_{a\in [k]}\xi_a n^{-2}\sum_{\ell\in  C_{a}}\Big(\Theta_{\ell i}H_{\ell i}\Theta_{\ell j}H_{\ell j}-\mathbb{E}^{(n)}[\Theta_{\ell i}H_{\ell i}\Theta_{\ell j}H_{\ell j}]\Big)\,.
\end{align*}
Using Hoeffding's inequality, we can show that with probability at least $1-\delta$, 
$$
\max_{i,j\in[n]: \ i\neq j}\Big|\mathbbm{1}_{\{i\neq j\}}\sum_{a \in [k]}\xi_a n^{-1}\sum_{\ell\in C_{a}}\big(\Theta_{\ell i}H_{\ell i}\Theta_{\ell j}H_{\ell j}-\mathbb{E}^{(n)}[\Theta_{\ell i}H_{\ell i}\Theta_{\ell j}H_{\ell j}]\big)\Big|\leq K_1\sqrt{\frac{\log(n/\delta)}{n}}\,,
$$
so that 
$$
\big\Vert(L-\mathbb{E}^{(n)}(L)){\rm vec}({\rm diag}(\overline{v}))\big\Vert_{\infty} \leq K_1\sqrt{\frac{\log(n/\delta)}{n^3}}\,,
$$
with probability at least $1-\delta$. To conclude, we then proceed as before using that, on the event ${\cal E}$, it holds that $\vertiii{{\cal G}(L)}_{\infty}\leq \sum_{u=0}^{\infty}(\vertiii{L}_{\infty})^{u}\leq 4/3$.

\end{proof}

We now prove Proposition \ref{prop:gen:Psi}.

\begin{proof}[Proof of Proposition \ref{prop:gen:Psi}]
Since ${\rm vec}({\rm diag}(\overline{v}))={\rm vec}({\rm diag}(\sum_{b\in [k]}\xi_b \mathbf{1}_{C_b})) =\sum_{b\in [k]}\xi_b\,{\rm vec}({\rm diag}(\mathbf{1}_{C_b}))$ and ${\cal G}_{\geq 1}(\mathbb{E}^{(n)}[L])=\sum_{u\geq 1}(\mathbb{E}^{(n)}[L])^u$, it follows from item~(iii) of Lemma~\ref{lem:gen:L} that, 
\begin{equation}
\label{eq:1:proof:prop:gen:Psi}
{\cal G}_{\geq 1}\big(\mathbb{E}^{(n)}[L]\big){\rm vec}({\rm diag}(\overline{v}))
=\frac{1}{n}\sum_{u=1}^{\infty}\sum_{a,c\in [k]}\big(H^{(u)}\big)_{ac}v^{(a,c)}
=\frac{1}{n}\sum_{a,c\in [k]}\Big(\sum_{u=1}^{\infty}H^{(u)}\Big)_{ac}v^{(a,c)}\,.
\end{equation}
Set $\Phi:=\sum_{u=1}^{\infty}H^{(u)}$. We first check that $\Phi$ is well-defined. Since $n\alpha_a\geq 1$ for all $a \in [k]$, we have that $0\leq 1-\tfrac{1}{n\alpha_a}\leq 1$ for all $a\in [k]$. From this it follows that 
\[
\vertiii{H^{(u)}-\tfrac{1}{n}{\rm Diag}(H^{(u)})\,{\rm diag}(\alpha^{-1})}_{max}
=\Big(\max_{a\neq b}|H^{(u)}_{ab}|\Big)\vee\Big(\max_{a}\big|H^{(u)}_{aa}\big(1-\tfrac{1}{n\alpha_a}\big)\big|\Big)
\leq\vertiii{H^{(u)}}_{max}\,.
\]
Using the inequalities $\vertiii{BC}_{max}\leq\vertiii{B}_{\infty}\vertiii{C}_{max}$ and
$\vertiii{BC}_{max}\leq\vertiii{B}_{max}\vertiii{C^{\top}}_{\infty}$ together with
$\vertiii{W}_{\infty}\leq M$ and~\eqref{eq:gen:H:rec}, we then obtain
\[
\vertiii{H^{(u+1)}}_{max}\leq\vertiii{W}_{\infty}\,
\vertiii{H^{(u)}-\tfrac{1}{n}{\rm Diag}(H^{(u)})\,{\rm diag}(\alpha^{-1})}_{max}\,\vertiii{W}_{\infty}
\leq M^{2}\,\vertiii{H^{(u)}}_{max}\,.
\]
Iterating the above inequality, we deduce that $\vertiii{H^{(u+1)}}_{max}\leq M^{2u}\vertiii{H^{(1)}}_{max}$. Now, since
$M<1/2$ under Condition \eqref{eq:wellposed_random_scenario} and $\vertiii{H^{(1)}}_{max}=\max_{a,b}\big|\sum_{c}W_{ac}\xi_c W_{cb}\alpha_b^{-1}\big|<\infty$, it follows then that
\[
\vertiii{\Phi}_{max}\leq\sum_{u=1}^{\infty}\vertiii{H^{(u)}}_{max}
\leq\Big(\sum_{u=0}^{\infty}M^{2u}\Big)\vertiii{H^{(1)}}_{max}<\infty\,,
\]
proving that $\Phi$ is well-defined. 

Summing the recursion~\eqref{eq:gen:H:rec} over
$u\geq 1$ and using the definition of the matrix $\Phi$, we see that
\begin{equation}\label{eq:gen:Phi:sylvester}
\Phi=H^{(1)}+W\Big(\Phi-\frac{1}{n}{\rm Diag}(\Phi)\,{\rm diag}(\alpha^{-1})\Big)W^{\top}\,.
\end{equation}
Next, we solve~\eqref{eq:gen:Phi:sylvester} in its vectorized form. Starting from \eqref{eq:gen:Phi:sylvester} and using the linearity of the vec operation together with the identity ${\rm vec}(WBW^{\top})=(W\otimes W){\rm vec}(B)$, we have that  
\[
{\rm vec}(\Phi)={\rm vec}(H^{(1)})+(W\otimes W)\Big[{\rm vec}(\Phi)-\frac{1}{n}{\rm vec}\big({\rm Diag}(\Phi)\,{\rm diag}(\alpha^{-1})\big)\Big]
\,.
\]
Let $Q\in\mathbb{R}^{k^2\times k^2}$ be the matrix with entries
$Q_{ab,cd}=\mathbbm{1}_{\{a=b=c=d\}}\alpha_a^{-1}$, for all $a,b,c,d\in[k]$ and note that for all $a,b\in[k]$,
\[
\big(Q\,{\rm vec}(\Phi)\big)_{ab}=\mathbbm{1}_{\{a=b\}}\alpha_a^{-1}\,{\rm vec}(\Phi)_{aa}
={\rm vec}\big({\rm Diag}(\Phi)\,{\rm diag}(\alpha^{-1})\big)_{ab}\,,
\]
i.e., ${\rm vec}({\rm Diag}(\Phi){\rm diag}(\alpha^{-1}))=Q\,{\rm vec}(\Phi)$. Consequently, it follows that
\[
\Big(I_{k^2}-(W\otimes W)\big(I_{k^2}-\tfrac{1}{n}Q\big)\Big){\rm vec}(\Phi)={\rm vec}(H^{(1)})\,.
\]
Observe that the matrix $I_{k^2}-(W\otimes W)(I_{k^2}-\tfrac{1}{n}Q)$ is invertible. Indeed,
\begin{align*}
\rho\big((W\otimes W)\big(I_{k^2}-\tfrac{1}{n}Q\big)\big)&\leq \vertiii{(W\otimes W)\big(I_{k^2}-\tfrac{1}{n}Q\big)}_{\infty}\\
&\leq\vertiii{W\otimes W}_{\infty}\,\vertiii{I_{k^2}-\tfrac{1}{n}Q}_{\infty}\\
&=\Big(\max_{a,b}\big|\textstyle\sum_{d}W_{bd}\big|\big|\sum_{c}W_{ac}\big|\Big)
\Big(1\vee\max_{a}\big|1-\tfrac{1}{n\alpha_a}\big|\Big)
\\
&\leq\vertiii{W}_{\infty}^{2}\leq M^{2}<1/4\,.
\end{align*}
where last inequality we have used Condition \eqref{eq:wellposed_random_scenario}.

As a consequence, we deduce that
\[
{\rm vec}(\Phi)=\Big(I_{k^2}-(W\otimes W)\big(I_{k^2}-\tfrac{1}{n}Q\big)\Big)^{-1}{\rm vec}(H^{(1)})\,.
\]
Now, using the resolvent identity \eqref{resolvent:identity} with $B=I_{k^2}-(W\otimes W)\big(I_{k^2}-\tfrac{1}{n}Q\big)$ and $C=I_{k^2}-(W\otimes W)$, we see that
\begin{multline*}
\Big(I_{k^2}-(W\otimes W)\big(I_{k^2}-\tfrac{1}{n}Q\big)\Big)^{-1}
=\big(I_{k^2}-W\otimes W\big)^{-1}\\
-\frac{1}{n}\Big(I_{k^2}-(W\otimes W)\big(I_{k^2}-\tfrac{1}{n}Q\big)\Big)^{-1}
(W\otimes W)\,Q\,\big(I_{k^2}-W\otimes W\big)^{-1}\,,
\end{multline*}
which, in turn, allow us to conclude that 
\begin{equation}
\label{eq:gen:Phi:sylvester:vec:form}
{\rm vec}(\Phi)={\rm vec}(\Psi)+\tfrac{1}{n}{\rm vec}(E),   
\end{equation}
where $\Psi:={\rm vec}^{-1}\Big(\big(I_{k^2}-W\otimes W\big)^{-1}{\rm vec}(H^{(1)})\Big)$ and 
\[
E:={\rm vec}^{-1}\Big(-\Big(I_{k^2}-(W\otimes W)\big(I_{k^2}-\tfrac{1}{n}Q\big)\Big)^{-1}
(W\otimes W)\,Q\,\big(I_{k^2}-W\otimes W\big)^{-1}{\rm vec}(H^{(1)})\Big)\,.
\]
Let us check that $\Psi$ satisfies \eqref{def:Psi}.
Using first the identity $(B\otimes C)^{u}=B^u\otimes C^u$ which is valid for all $u\geq 0$, and then the identity ${\rm vec}(BDC)=(C^{\top}\otimes B)\,{\rm vec}(D)$, we have
\begin{align*}
{\rm vec}(\Psi)=\big(I_{k^2}-W\otimes W\big)^{-1}{\rm vec}(H^{(1)})
&=\sum_{u=0}^{\infty}(W\otimes W)^{u}{\rm vec}(H^{(1)})
=\sum_{u=0}^{\infty}\big(W^{u}\otimes W^{u}\big){\rm vec}(H^{(1)})\\
&=\sum_{u=0}^{\infty}{\rm vec}\big(W^{u}H^{(1)}(W^{\top})^{u}\big)
={\rm vec}\Big(\sum_{u=0}^{\infty}W^{u}H^{(1)}(W^{\top})^{u}\Big)\,.
\end{align*}
Now by \eqref{eq:gen:commute}, we have  
\[
H^{(1)}=W{\rm diag}(\xi)W{\rm diag}(\alpha^{-1})
=W{\rm diag}(\xi){\rm diag}(\alpha^{-1})W^{\top}
=W{\rm diag}(\xi\odot \alpha^{-1})W^{\top},
\]
so that
$W^{u}H^{(1)}(W^{\top})^{u}=W^{u+1}{\rm diag}(\xi\odot \alpha^{-1})(W^{\top})^{u+1}$. As a consequence, we obtain that 
 \[
{\rm vec}(\Psi)={\rm vec}\Big(\sum_{u=0}^{\infty}W^{u}H^{(1)}(W^{\top})^{u}\Big)={\rm vec}\Big(\sum_{u=0}^{\infty}W^{u+1}\,{\rm diag}\big(\xi\odot \alpha^{-1}\big)\,\big(W^{\top}\big)^{u+1}\Big),\
 \]
establishing \eqref{def:Psi}.

Finally, using that $\vertiii{{\cal G}(D)}_{\infty}\leq(1-\vertiii{D}_{\infty})^{-1}$ for any matrix $D$ such that $\vertiii{D}_{\infty}<1$,
together with $\vertiii{W\otimes W}_{\infty}=\vertiii{W}_{\infty}^2$ and
$\vertiii{Q}_{\infty}=\alpha_{min}^{-1}$, one can check that
\[
\vertiii{E}_{max}=\Vert {\rm vec}(E)\Vert_{\infty}
\leq\frac{\vertiii{Q}_{\infty}\,\vertiii{W}_{\infty}^{2}}{\big(1-\vertiii{W}_{\infty}^{2}\big)^{2}}\,
\vertiii{H^{(1)}}_{max}
\leq\frac{\vertiii{W}_{\infty}^{4}}{1-\vertiii{W}_{\infty}^{2}}\cdot\frac{1}{\alpha_{min}}\cdot\max_{a\in[k]}\frac{\xi_a}{\alpha_a}
=:K
\]
Clearly $K<\infty$ because $\vertiii{W}_{\infty}\leq M<1/2$ by Condition  \eqref{eq:wellposed_random_scenario}. 

Inverting the vectorization in equation \eqref{eq:gen:Phi:sylvester:vec:form} yields
$\Phi=\Psi+\tfrac{1}{n}E$ with $\vertiii{E}_{max}\leq K$, and the result follows from \eqref{eq:1:proof:prop:gen:Psi} and the fact that 
$\frac{1}{n}\sum_{a,c\in [k]}\Psi_{ac}v^{(a,c)}={\rm vec}(d_0(\overline{\Sigma}^{(0)})).$
\end{proof}

\section{Asymptotic approximation for the lagged covariance matrices}
\label{sec:asym:approx:lagged:cov:matrices}

Here, we prove Theorem \ref{thm:gen:convergence_cov_lag}.

\begin{proof}[Proof of Theorem  \ref{thm:gen:convergence_cov_lag}]
Throughout the proof, we denote $\mathcal{E}=\{(\Theta,H)\in\Omega^{adm}\}$. 
Fix an integer $t\geq1$. As in the proof of Theorem~\ref{thm:convergence_mean_version2}, for
every $\tau>0$, note that we can write
\begin{equation}\label{eq:lag:reduction}
\mathbb{P}^{(n)}\Big(\Bigvertiii{\Sigma^{(t)}_{(\Theta,H)}-\overline{\Sigma}^{(t)}_{(\Theta,H)}}_{max}\geq\tau\Big)
\leq\mathbb{P}^{(n)}(\mathcal{E}^c)
+\mathbb{P}^{(n)}\Big(\mathcal{E}\cap\Big\{\Bigvertiii{\Sigma^{(t)}_{(\Theta,H)}-\overline{\Sigma}^{(t)}_{(\Theta,H)}}_{max}\geq\tau\Big\}\Big)\,.
\end{equation}
By Lemma~\ref{lem:random_NFC_welldefined}, the first term is at most $2ne^{-Cn}$. Therefore, it
suffices to show that the second term is at most $\delta$, for $\tau=K\sqrt{\log(n/\delta)/n^{3}}$ where $K$ is a suitable positive independent of $n$ and $t$.

By Proposition~\ref{prop:t-laggedCov}, we have that $\Sigma^{(t)}_{(\Theta,H)}=(A^{\top})^{t}\Sigma^{(0)}_{(\Theta,H)}$. Therefore, 
decomposing $\Sigma^{(0)}_{(\Theta,H)}={\rm diag}(v_{(\Theta,H)}(X_0))+d_0(\Sigma^{(0)}_{(\Theta,H)})$
and recalling the definition of $\overline{\Sigma}^{(t)}_{(\Theta,H)}$ (see \eqref{def:gen:Gamma:t}), we can write
\[
\Sigma^{(t)}_{(\Theta,H)}-\overline{\Sigma}^{(t)}_{(\Theta,H)}=R_1+R_2+R_3+R_4\,,
\]
where
\begin{align*}
R_1&:=(A^{\top})^{t}\big(d_0(\Sigma^{(0)}_{(\Theta,H)})-d_0(\overline{\Sigma}^{(0)})\big)\,,
&R_3&:=\big((A^{\top})^{t}-(\mathbb{E}[A^{\top}])^{t}\big)d_0(\overline{\Sigma}^{(0)})\,,\\
R_2&:=(A^{\top})^{t}\big({\rm diag}(v_{(\Theta,H)}(X_0))-{\rm diag}(\overline{v})\big)\,,
&R_4&:=(\mathbb{E}[A^{\top}])^{t}d_0(\overline{\Sigma}^{(0)})-\frac{1}{n}\sum_{a,b\in[k]}(W^{t}\Psi)_{ab}\mathbf{1}_{C_a}\mathbf{1}_{C_b}^{\top}\,.
\end{align*}
Below, we upper bound
each term on the event ${\cal E}$. In what follows,
$K_1$ denotes a positive constant independent of $n$ and $t$ that may change from line to line.

\medskip\noindent
\emph{Term $R_1$.} Using that $\vertiii{D_1D_2}_{max}\leq\vertiii{D_1}_{\infty}\vertiii{D_2}_{max}$
and that $\vertiii{(A^{\top})^{t}}_{\infty}\leq (\vertiii{(A^{\top}}_{\infty})^{t}<1$ on the event ${\cal E}$, we have   
\[
\vertiii{R_1}_{max}\leq\vertiii{(A^{\top})^{t}}_{\infty}
\vertiii{d_0(\Sigma^{(0)}_{(\Theta,H)})-d_0(\overline{\Sigma}^{(0)})}_{max}
\leq\vertiii{d_0(\Sigma^{(0)}_{(\Theta,H)})-d_0(\overline{\Sigma}^{(0)}))}_{max}\,,
\]
on the event ${\cal E}$.
By the proof of Theorem \ref{thm:convergence_cov} (see \eqref{eq:decomposition:proba2}), we can then conclude from the above inequality that 
\[
\mathbb{P}^{(n)}\big({\cal E}\cap \{\vertiii{R_1}_{max}\geq K_1\sqrt{\log(n/\delta)/n^3} \}\big)\leq \delta.
\]

\medskip\noindent
\emph{Term $R_2$.} Using the inequalities
$\vertiii{(A^{\top})^{t}B}_{max}\leq\vertiii{(A^{\top})^{t-1}}_{\infty}\vertiii{A^{\top}B}_{max}$
and  $\vertiii{A^{\top}B}_{max}\leq\vertiii{A^{\top}}_{max}\vertiii{B}_{\infty}$ both with
$B={\rm diag}(v_{(\Theta,H)}(X_0))-{\rm diag}(\overline{v})$, we deduce that
\[
\vertiii{R_2}_{max}\leq \vertiii{(A^{\top})^{t-1}}_{\infty} \vertiii{A^{\top}}_{max} \big\Vert v_{(\Theta,H)}(X_0)-\overline{v}\big\Vert_{\infty}.  
\]
Since $\vertiii{A^{\top}}_{max}\leq n^{-1}(\max_{b\in[k]}\mu_b\vee\Out)$ and
$\vertiii{(A^{\top})^{t}}_{\infty}\leq (\vertiii{(A^{\top}}_{\infty})^{t}<1$ on the event ${\cal E}$, we then obtain
\[
\vertiii{R_2}_{max}\leq\frac{\max_{b\in[k]}\mu_b\vee\Out}{n}
\big\Vert v_{(\Theta,H)}(X_0)-\overline{v}\big\Vert_{\infty}\,.
\]
Finally, since 
$\Vert v_{(\Theta,H)}(X_0)-\overline{v}\Vert_{\infty}\leq\big\Vert\mathbb{E}_{(\Theta,H)}[X_0]-\overline{m}\big\Vert_{\infty}$, we can combine the above inequality with inequalities \eqref{eq:bound:err:mean:bar:m} and \eqref{eq:bound_err} to deduce that
\[
\mathbb{P}^{(n)}\big({\cal E}\cap \{\vertiii{R_2}_{max}\geq K_1\sqrt{\log(n/\delta)/n^3} \}\big)\leq \delta.
\]

\medskip\noindent
\emph{Term $R_3$.} Let $\Delta:=A^{\top}-\mathbb{E}^{(n)}[A^{\top}]$ and recall the
identity (which can be easily proved by induction on $t$),
\[
(A^{\top})^{t}-(\mathbb{E}^{(n)}[A^{\top}])^{t}=\sum_{s=0}^{t-1}(A^{\top})^{t-1-s}\,\Delta\,(\mathbb{E}^{(n)}[A^{\top}])^{s}\,.
\]
Using $\vertiii{D_1D_2}_{max}\leq\vertiii{D_1}_{\infty}\vertiii{D_2}_{max}$, the
sub-multiplicativity of $\vertiii{\cdot}_{\infty}$, the bound $\vertiii{A^{\top}}_{\infty}<1$ on the event ${\cal E}$,
and the fact that $\vertiii{\mathbb{E}[A^{\top}]}_{\infty}=\vertiii{W}_{\infty}\leq M\leq 1/2$ (the last inequality holds because Condition \eqref{eq:wellposed_random_scenario} is granted), we then obtain that 
\begin{align*}
\vertiii{R_3}_{max}&\leq\vertiii{(A^{\top})^{t}-(\mathbb{E}[A^{\top}])^{t}}_{\infty}\,\vertiii{d_0(\overline{\Sigma}^{(0)})}_{max}\\
&\leq\vertiii{\Delta}_{\infty}\Big(\sum_{s=0}^{t-1}2^{-s}\Big)\vertiii{d_0(\overline{\Sigma}^{(0)})}_{max}
\leq 2\vertiii{\Delta}_{\infty}\,\vertiii{d_0(\overline{\Sigma}^{(0)})}_{max}\,.    
\end{align*}
Finally, using that $\vertiii{d_0(\overline{\Sigma}^{(0)})}_{max}\leq n^{-1}\vertiii{\Psi}_{max}$ and that $\vertiii{\Delta}_{\infty}=\vertiii{A^\top- \mathbb{E}^{(n)}[A^\top]}_{\infty}\leq K_1\sqrt{\log (n/\delta)/n}$ with probability at least $1-\delta$ (which can be proved similarly to what we did in the proof of Theorem~\ref{thm:convergence_mean_version2}), we conclude that
\[
\mathbb{P}^{(n)}\big({\cal E}\cap \{\vertiii{R_3}_{max}\geq K_1\sqrt{\log(n/\delta)/n^3} \}\big)\leq \delta.
\]

\medskip\noindent
\emph{Term $R_4$.} Note that $R_4$ is a deterministic matrix. By Lemma \ref{lem:EAt:power:1Ca}, we know that 
$(\mathbb{E}^{(n)}[A^{\top}])^{t}\mathbf{1}_{C_a}=\sum_{c\in[k]}(W^{t})_{ca}\mathbf{1}_{C_c}$ for every $t\geq 1$. 
Writing
$d_0(\mathbf{1}_{C_a}\mathbf{1}_{C_b}^{\top})=\mathbf{1}_{C_a}\mathbf{1}_{C_b}^{\top}-\mathbbm{1}_{\{a=b\}}{\rm diag}(\mathbf{1}_{C_a})$ and also using that 
$B\mathbf{1}_{C_a}\mathbf{1}_{C_b}^{\top}=(B\mathbf{1}_{C_a})\mathbf{1}_{C_b}^{\top}$ for any $n\times n$ matrix $B$, 
we obtain
\begin{align*}
(\mathbb{E}[A^{\top}])^{t}d_0(\overline{\Sigma}^{(0)})
&=\frac{1}{n}\sum_{a,b\in[k]}\Psi_{ab}\big((\mathbb{E}^{(n)}[A^{\top}])^{t}\mathbf{1}_{C_a}\big)\mathbf{1}_{C_b}^{\top}-\frac{1}{n}\sum_{a\in[k]}\Psi_{aa}\,(\mathbb{E}^{(n)}[A^{\top}])^{t}{\rm diag}(\mathbf{1}_{C_a}) 
\\
=&\frac{1}{n}\sum_{a,b\in[k]}\Psi_{ab}\Big(\sum_{c\in[k]}(W^{t})_{ca}\mathbf{1}_{C_c}\Big)\mathbf{1}_{C_b}^{\top}
-\frac{1}{n}\sum_{a\in[k]}\Psi_{aa}\,(\mathbb{E}^{(n)}[A^{\top}])^{t}{\rm diag}(\mathbf{1}_{C_a})\,.
\end{align*}
Since $\sum_{a}(W^{t})_{ca}\Psi_{ab}=(W^{t}\Psi)_{cb}$, the first sum is equal to
$\frac{1}{n}\sum_{c,b\in[k]}(W^{t}\Psi)_{cb}\mathbf{1}_{C_c}\mathbf{1}_{C_b}^{\top}$, and therefore
\[
R_4=-\frac{1}{n}\sum_{a\in[k]}\Psi_{aa}\,(\mathbb{E}[A^{\top}])^{t}{\rm diag}(\mathbf{1}_{C_a})\,.
\]
Combining the above identity with the inequalities 
$\vertiii{(\mathbb{E}[A^{\top}])^{t}{\rm diag}(\mathbf{1}_{C_a})}_{max}\leq\vertiii{(\mathbb{E}[A^{\top}])^{t}}_{max}
\leq\vertiii{\mathbb{E}[A^{\top}]}_{\infty}^{t-1}\vertiii{\mathbb{E}[A^{\top}]}_{max}$ and $\vertiii{\mathbb{E}[A^{\top}]}_{max}=n^{-1}\vertiii{\mathcal{W}}_{max}$,
it follows that
\[
\vertiii{R_4}_{max}\leq\frac{1}{n}\Big(\sum_{a\in[k]}|\Psi_{aa}|\Big)\vertiii{\mathbb{E}[A^{\top}]}_{\infty}^{t-1}\frac{\vertiii{\mathcal{W}}_{max}}{n}\leq K_1\frac{\vertiii{\mathbb{E}[A^{\top}]}_{\infty}^{t-1}}{n^{2}}\,.
\]
Finally, using that $\vertiii{\mathbb{E}[A^{\top}]}_{\infty}^{t-1}<1$ on the event ${\cal E}$, we deduce that $\vertiii{R_4}_{max}\leq K_1n^{-2}$
which can be absorbed in the error $K_1\sqrt{\log(n/\delta)/n^3}$.

\medskip
Combining the bounds on error terms $R_1$, $R_2$, $R_3$ and $R_4$ via union bound with $\delta/4$ replacing $\delta$ shows that that the second term of \eqref{eq:lag:reduction} is at most $\delta$, for $\tau=K\sqrt{\log(n/\delta)/n^{3}}$ where $K$ is a suitable positive independent of $n$ and $t$, concluding the proof.
\end{proof}

\section{On the mean of the matrices \texorpdfstring{$A$}{A} and \texorpdfstring{$L$}{L}}
\label{sec:mean:A:and:L}

In this section, we collect some auxiliary results concerning the mean of the interaction matrix $A$ and of the matrix $L$ defined in \eqref{def:gen:L}.

Before stating the first auxiliary result, let $\mathcal{W}$ be the  $k\times k$ matrix defined by
\begin{equation}\label{def:gen:Wbar}
\mathcal{W}_{ab}:=(2\beta-1)\big(\mu_a p_a\,\mathbbm{1}_{\{a=b\}}+\Out q\,\mathbbm{1}_{\{a\neq b\}}\big)\,,\quad a,b\in[k]\,.
\end{equation}
Observe that the matrix $\mathcal{W}$ is symmetric, i.e.\ $\mathcal{W}=\mathcal{W}^{\top}$, and
that it is related to the matrix $W$ defined in~\eqref{def:matrix_G} by
\begin{equation}\label{eq:gen:W:Wbar}
W=\mathcal{W}\,{\rm diag}(\alpha)\, \text{with} \ 
\alpha=(\alpha_1,\ldots,\alpha_k)\,.
\end{equation}
As a consequence of~\eqref{eq:gen:W:Wbar} and the symmetry of $\mathcal{W}$, one can easily check that
\begin{equation}\label{eq:gen:commute}
W\,{\rm diag}(\alpha^{-1})=\mathcal{W}={\rm diag}(\alpha^{-1})\,W^{\top}\,,
\end{equation}
Iterating \eqref{eq:gen:commute}, we deduce that $(W)^{u}{\rm diag}(\alpha^{-1})={\rm diag}(\alpha^{-1})(W^{\top})^{u}$
for all $u\geq 0$.

The first result of this section reads as follows. 

\begin{lemma}\label{lem:gen:EAtranspose}
Let $A$ be the interaction matrix defined in \eqref{eq:matrixA}
and $\mathcal{W}$ the matrix defined in \eqref{def:gen:Wbar}. Then we can write 
\begin{equation}\label{eq:gen:EA:Wbar}
\mathbb{E}[A^{\top}]=\mathbb{E}[A]=\frac{1}{n}\sum_{a,b\in[k]}\mathcal{W}_{ab}\,\mathbf{1}_{C_a}\mathbf{1}_{C_b}^{\top}\,.
\end{equation}
Moreover, for all $a,b\in[k]$,
\begin{equation}\label{eq:gen:lem:EAtranspose:1}
\mathbb{E}[A^{\top}]\,\mathbf{1}_{C_a}\mathbf{1}_{C_b}^{\top}
=\sum_{c\in[k]}W_{ca}\,\mathbf{1}_{C_c}\mathbf{1}_{C_b}^{\top}\,.
\end{equation}
Furthermore,
\begin{equation}\label{eq:gen:lem:EAtranspose:2}
\mathbb{E}[A^{\top}]\,\mathbf{1}_{n}\mathbf{1}_{n}^{\top}
=\sum_{c\in[k]}(W\mathbf{1}_k)_c\,\mathbf{1}_{C_c}\mathbf{1}_{n}^{\top}\,.
\end{equation}
\end{lemma}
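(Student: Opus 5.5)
The plan is a direct entrywise computation of $\mathbb{E}[A]$, followed by two matrix-product identities that use only the block form of $\mathbb{E}[A]$.

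\emph{Step 1: the entries of $\mathbb{E}[A]$.} Fix $i,j\in[n]$. By independence of $\Theta$ and $H$ we have $\mathbb{E}[\Theta_{ij}H_{ij}]=\mathbb{E}[\Theta_{ij}]\,\mathbb{E}[H_{ij}]$, and $\mathbb{E}[H_{ij}]=\beta-(1-\beta)=2\beta-1$. If $i,j\in C_a$, then $\mathbb{E}[\Theta_{ij}]=p_a$, so definition \eqref{eq:matrixA} gives $\mathbb{E}[A_{ij}]=n^{-1}(2\beta-1)\mu_a p_a=n^{-1}\mathcal{W}_{aa}$. If $i\in C_a$ and $j\in C_b$ with $a\neq b$, then $\mathbb{E}[A_{ij}]=n^{-1}(2\beta-1)\Out q=n^{-1}\mathcal{W}_{ab}$. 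Since $(\mathbf{1}_{C_a}\mathbf{1}_{C_b}^{\top})_{ij}=\mathbbm{1}_{\{i\in C_a,\,j\in C_b\}}$, these two cases give the right-hand side of \eqref{eq:gen:EA:Wbar} for $\mathbb{E}[A]$. The symmetry of $\mathcal{W}$ then implies $\mathbb{E}[A]^{\top}=\mathbb{E}[A]$, which is the claim $\mathbb{E}[A^{\top}]=\mathbb{E}[A]$.

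\emph{Step 2: identity \eqref{eq:gen:lem:EAtranspose:1}.} Apply \eqref{eq:gen:EA:Wbar} and use $\mathbf{1}_{C_d}^{\top}\mathbf{1}_{C_a}=|C_a|\mathbbm{1}_{\{d=a\}}=n\alpha_a\mathbbm{1}_{\{d=a\}}$. This gives
\[
\mathbb{E}[A^{\top}]\mathbf{1}_{C_a}\mathbf{1}_{C_b}^{\top}=\frac1n\sum_{c,d\in[k]}\mathcal{W}_{cd}\,\mathbf{1}_{C_c}\big(\mathbf{1}_{C_d}^{\top}\mathbf{1}_{C_a}\big)\mathbf{1}_{C_b}^{\top}=\sum_{c\in[k]}\mathcal{W}_{ca}\alpha_a\,\mathbf{1}_{C_c}\mathbf{1}_{C_b}^{\top}.
\]
By \eqref{eq:gen:W:Wbar} we have $\mathcal{W}_{ca}\alpha_a=W_{ca}$, which proves \eqref{eq:gen:lem:EAtranspose:1}.

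\emph{Step 3: identity \eqref{eq:gen:lem:EAtranspose:2}.} Write $\mathbf{1}_n=\sum_{a}\mathbf{1}_{C_a}$ in the first factor only, and keep $\mathbf{1}_n^{\top}$ as it is. The computation of Step 2 with $\mathbf{1}_{C_b}^{\top}$ replaced by $\mathbf{1}_n^{\top}$ gives $\mathbb{E}[A^{\top}]\mathbf{1}_{C_a}\mathbf{1}_n^{\top}=\sum_c W_{ca}\mathbf{1}_{C_c}\mathbf{1}_n^{\top}$. Summing over $a$ and using $\sum_a W_{ca}=(W\mathbf{1}_k)_c$ yields \eqref{eq:gen:lem:EAtranspose:2}.

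There is no real obstacle here. The only point that needs care is Step 1: the diagonal of $\mathbb{E}[A]$ should be checked to fit the block formula. It does, because the model places no restriction on $\theta_{ii}$, so $\Theta_{ii}\sim{\rm Ber}(p_a)$ for $i\in C_a$ and $\mathbb{E}[A_{ii}]=n^{-1}\mathcal{W}_{aa}$ like every other intra-community entry.
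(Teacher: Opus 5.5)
Your proof is correct and follows the paper's argument essentially step by step: you compute $\mathbb{E}[A_{ij}]$ entrywise and use the symmetry of $\mathcal{W}$, and you obtain the product identity from $\mathbf{1}_{C_d}^{\top}\mathbf{1}_{C_a}=n\alpha_a\mathbbm{1}_{\{d=a\}}$ together with $\mathcal{W}_{ca}\alpha_a=W_{ca}$. The only cosmetic difference is in the last identity: the paper decomposes $\mathbf{1}_n\mathbf{1}_n^{\top}=\sum_{a,b}\mathbf{1}_{C_a}\mathbf{1}_{C_b}^{\top}$ and invokes the second identity, whereas you decompose only the left factor.
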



\begin{proof}[Proof of Lemma~\ref{lem:gen:EAtranspose}]
By the definition of interaction matrix $A$ and the
independence of the environment, for all $i,j\in[n]$, we have
\[
\mathbb{E}[A_{ij}]=\frac{(2\beta-1)}{n}\Big(\sum_{a\in[k]}\mu_a p_a\,
\mathbbm{1}_{\{i\overset{a}{\sim}j\}}+\Out q\,\mathbbm{1}_{\{i\nsim j\}}\Big)\,.
\]
Since the right-hand side is symmetric in $(i,j)$, we have
$\mathbb{E}[A_{ij}]=\mathbb{E}[A_{ji}]$ for all $i,j\in[n]$ which, in turn, implies the first equality in \eqref{eq:gen:EA:Wbar}.
Moreover, the expression above for $\mathbb{E}[A_{ij}]$ and \eqref{def:gen:Wbar} imply that
\[
\mathbb{E}[A]=\frac{(2\beta-1)}{n}\Big(\sum_{a\in[k]}\mu_a p_a\,
\mathbf{1}_{C_a}\mathbf{1}_{C_a}^{\top}
+\Out q\sum_{a\in[k]}\sum_{\substack{b\in[k]\\ b\neq a}}\mathbf{1}_{C_a}\mathbf{1}_{C_b}^{\top}\Big)=\frac{1}{n}\sum_{a,b\in[k]}\mathcal{W}_{ab}\,\mathbf{1}_{C_a}\mathbf{1}_{C_b}^{\top}\,,
\]
proving the second equality in \eqref{eq:gen:EA:Wbar}.

Next, we will prove \eqref{eq:gen:lem:EAtranspose:1}. For that end, fix $a,b\in[k]$ and observe that by \eqref{eq:gen:EA:Wbar} and \eqref{eq:gen:W:Wbar}, we have
\begin{align*}
\mathbb{E}[A^{\top}]\,\mathbf{1}_{C_a}\mathbf{1}_{C_b}^{\top}
&=\Big(\frac{1}{n}\sum_{c,d\in [k]}\mathcal{W}_{cd}\mathbf{1}_{C_c}\mathbf{1}_{C_d}^{\top}\Big)\mathbf{1}_{C_a}\mathbf{1}_{C_b}^{\top}\\
&=\frac{1}{n}\sum_{c,d\in [k]}\mathcal{W}_{cd}\Big(\mathbf{1}_{C_c}\mathbf{1}_{C_d}^{\top}\mathbf{1}_{C_a}\mathbf{1}_{C_b}^{\top}\Big)\\
&=\sum_{c\in [k]}\mathcal{W}_{ca}\frac{|C_a|}{n}\mathbf{1}_{C_c}\mathbf{1}_{C_b}^{\top}=\sum_{c\in [k]}W_{ca}\mathbf{1}_{C_c}\mathbf{1}_{C_b}^{\top}\,,
\end{align*}
proving \eqref{eq:gen:lem:EAtranspose:1}.
Finally, using $\mathbf{1}_n\mathbf{1}_n^{\top}=\sum_{a\in[k]}\sum_{b\in[k]}\mathbf{1}_{C_a}\mathbf{1}_{C_b}^{\top}$
together with~\eqref{eq:gen:lem:EAtranspose:1}, we obtain that
\begin{align*}
\mathbb{E}[A^{\top}]\,\mathbf{1}_{n}\mathbf{1}_{n}^{\top}
&=\sum_{a\in[k]}\sum_{b\in[k]}\sum_{c\in[k]}W_{ca}\,\mathbf{1}_{C_c}\mathbf{1}_{C_b}^{\top}
=\sum_{a\in[k]}\sum_{c\in[k]}W_{ca}\,\mathbf{1}_{C_c}\Big(\sum_{b\in[k]}\mathbf{1}_{C_b}\Big)^{\top}\\
&=\sum_{c\in[k]}\Big(\sum_{a\in[k]}W_{ca}\Big)\mathbf{1}_{C_c}\mathbf{1}_{n}^{\top}
=\sum_{c\in[k]}(W\mathbf{1}_k)_c\,\mathbf{1}_{C_c}\mathbf{1}_{n}^{\top}\,,
\end{align*}
establishing \eqref{eq:gen:lem:EAtranspose:2}, concluding the proof. 
\end{proof}

Our second result is the following.

\begin{lemma}\label{lem:EAt:power:1Ca}
For every $a\in[k]$ and every integer $u\geq1$,
\[
\big(\mathbb{E}[A^{\top}]\big)^{u}\mathbf{1}_{C_a}=\sum_{c\in[k]}(W^{u})_{ca}\,\mathbf{1}_{C_c}\,.
\]
\end{lemma}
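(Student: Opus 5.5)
The plan is to argue by induction on $u$, with Lemma~\ref{lem:gen:EAtranspose} supplying the base case.

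\emph{Base case $u=1$.} Start from the representation~\eqref{eq:gen:EA:Wbar}, namely $\mathbb{E}[A^{\top}]=\frac1n\sum_{c,d\in[k]}\mathcal{W}_{cd}\mathbf{1}_{C_c}\mathbf{1}_{C_d}^{\top}$. Apply this matrix to $\mathbf{1}_{C_a}$ and use $\mathbf{1}_{C_d}^{\top}\mathbf{1}_{C_a}=\mathbbm{1}_{\{d=a\}}|C_a|=\mathbbm{1}_{\{d=a\}}\alpha_a n$. This gives
\[
\mathbb{E}[A^{\top}]\mathbf{1}_{C_a}=\sum_{c\in[k]}\mathcal{W}_{ca}\alpha_a\mathbf{1}_{C_c}=\sum_{c\in[k]}W_{ca}\mathbf{1}_{C_c},
\]
where the last step uses~\eqref{eq:gen:W:Wbar}, i.e.\ $W=\mathcal{W}{\rm diag}(\alpha)$. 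Equivalently, one can take~\eqref{eq:gen:lem:EAtranspose:1} with any $b$ and strip off the right factor $\mathbf{1}_{C_b}^{\top}$. The computation is identical, so I would simply redo the one-line calculation.

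\emph{Inductive step.} Assume the identity holds for some $u\ge1$. Write $(\mathbb{E}[A^{\top}])^{u+1}\mathbf{1}_{C_a}=\mathbb{E}[A^{\top}]\big((\mathbb{E}[A^{\top}])^{u}\mathbf{1}_{C_a}\big)$ and insert the induction hypothesis. By linearity this equals $\sum_{d}(W^u)_{da}\,\mathbb{E}[A^{\top}]\mathbf{1}_{C_d}$. Applying the base case to each $\mathbb{E}[A^{\top}]\mathbf{1}_{C_d}$ yields
\[
\sum_{d}(W^u)_{da}\sum_{c}W_{cd}\mathbf{1}_{C_c}=\sum_c\Big(\sum_d W_{cd}(W^u)_{da}\Big)\mathbf{1}_{C_c}=\sum_c (W^{u+1})_{ca}\mathbf{1}_{C_c}.
\]

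There is no real obstacle here. The only point that needs care is the index bookkeeping: the block $\mathbf{1}_{C_c}$ carries the row index of $W$, so the product must come out as $W\cdot W^u$ and not $W^u\cdot W$. These agree anyway since both equal $W^{u+1}$, but the order matters for the intermediate expressions to be correct.
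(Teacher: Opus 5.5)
Your proof is correct and follows the paper's argument: induction on $u$, with the base case obtained by applying the block representation \eqref{eq:gen:EA:Wbar} to $\mathbf{1}_{C_a}$ and using $W=\mathcal{W}\,{\rm diag}(\alpha)$. The inductive step is linearity plus the base case, and your index bookkeeping and the alternative of stripping $\mathbf{1}_{C_b}^{\top}$ from \eqref{eq:gen:lem:EAtranspose:1} are both valid.
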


\begin{proof}
We argue by induction on $u\geq1$. To establish the result for $u=1$, we argue as follows. Since $\mathbb{E}[A^{\top}]=\frac{1}{n}\sum_{c,d\in[k]}\mathcal{W}_{cd}\,\mathbf{1}_{C_c}\mathbf{1}_{C_d}^{\top}$ by \eqref{eq:gen:EA:Wbar}, we have 
\[
\mathbb{E}[A^{\top}]\,\mathbf{1}_{C_a}
=\frac{1}{n}\sum_{c,d\in[k]}\mathcal{W}_{cd}\,\mathbf{1}_{C_c}\big(\mathbf{1}_{C_d}^{\top}\mathbf{1}_{C_a}\big)
=\frac{1}{n}\sum_{c\in[k]}\mathcal{W}_{ca}\,|C_a|\,\mathbf{1}_{C_c}
=\sum_{c\in[k]}\mathcal{W}_{ca}\,\alpha_a\,\mathbf{1}_{C_c}\,,
\]
where in the second and third equalities we used  $\mathbf{1}_{C_d}^{\top}\mathbf{1}_{C_a}=|C_a|\,\mathbbm{1}_{\{d=a\}}$ and $|C_a|=n\alpha_a$, respectively. 
Using that $W_{ca}=\mathcal{W}_{ca}\alpha_a$ by\eqref{eq:gen:W:Wbar}, we see that the right-hand side is equal to
$\sum_{c\in[k]}W_{ca}\mathbf{1}_{C_c}$, which is the claim for $u=1$.

Now, assume that the result holds for some $u\geq1$. Using linearity and the
inductive hypothesis,
\[
\big(\mathbb{E}[A^{\top}]\big)^{u+1}\mathbf{1}_{C_a}
=\mathbb{E}[A^{\top}]\Big(\sum_{c\in[k]}(W^{u})_{ca}\,\mathbf{1}_{C_c}\Big)
=\sum_{c\in[k]}(W^{u})_{ca}\,\mathbb{E}[A^{\top}]\,\mathbf{1}_{C_c}\,.
\]
Finally, applying the base case to each $\mathbb{E}[A^{\top}]\mathbf{1}_{C_c}$ gives
\[
\big(\mathbb{E}[A^{\top}]\big)^{u+1}\mathbf{1}_{C_a}
=\sum_{c,d\in[k]}(W^{u})_{ca}\,W_{dc}\,\mathbf{1}_{C_d}
=\sum_{d\in[k]}\Big(\sum_{c\in[k]}W_{dc}(W^{u})_{ca}\Big)\mathbf{1}_{C_d}
=\sum_{d\in[k]}(W^{u+1})_{da}\,\mathbf{1}_{C_d}\,,
\]
which concludes the proof.
\end{proof}

Our last result focus on the mean of the matrix $L$.

\begin{lemma}\label{lem:gen:L}
Suppose Condition \eqref{eq:wellposed_random_scenario} is granted and let $\xi$ be the vector defined in  \eqref{def:xi:vector}.
For all $a,b\in [k]$, let $v^{(a,b)}$ be the vector defined in \eqref{def:gen:Vab}.
Let $W$ and $L$ be the matrices defined in \eqref{def:matrix_G} and  \eqref{def:gen:L}, respectively. 
Then the following identities hold.
\begin{itemize}
\item[(i)] For all $b\in[k]$,
\[
\mathbb{E}^{(n)}[L]\,{\rm vec}\big({\rm diag}(\mathbf{1}_{C_b})\big)
=\frac{1}{n}\sum_{a\in[k]}\sum_{c\in[k]}W_{ab}W_{bc}\,\alpha_c^{-1}\,v^{(a,c)}\,.
\]
\item[(ii)] For all $a,b\in[k]$,
\[
\mathbb{E}^{(n)}[L]\,v^{(a,b)}
=\sum_{c\in[k]}\sum_{d\in[k]}W_{ca}W_{db}\Big(1-\frac{1}{\alpha_a n}\mathbbm{1}_{\{a=b\}}\Big)v^{(c,d)}\,.
\]
\item[(iii)] For all $u\geq 1$,
\[
\big(\mathbb{E}^{(n)}[L]\big)^{u}\Big(\sum_{b\in[k]}\xi_b\,{\rm vec}\big({\rm diag}(\mathbf{1}_{C_b})\big)\Big)
=\frac{1}{n}\sum_{a\in[k]}\sum_{c\in[k]}\big(H^{(u)}\big)_{ac}v^{(a,c)}\,,
\]
where $H^{(1)}=W{\rm diag}(\xi)\,W\,{\rm diag}(\alpha^{-1})$ and,
for $u\geq 1$,
\begin{equation}\label{eq:gen:H:rec}
H^{(u+1)}=W\Big(H^{(u)}-\frac{1}{n}{\rm Diag}\big(H^{(u)}\big)\,{\rm diag}(\alpha^{-1})\Big)W^{\top}\,.
\end{equation}
\end{itemize}
\end{lemma}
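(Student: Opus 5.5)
The plan is to compute $\mathbb{E}^{(n)}[L]$ entrywise and then apply it to the two families of vectors in (i) and (ii). Item (iii) will follow by induction on $u$.

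\textbf{Entries of $\mathbb{E}^{(n)}[L]$.} By definition, $L_{ij,h\ell}=\mathbbm{1}_{\{i\neq j\}}A_{\ell j}A_{hi}$. For $i\neq j$, the pairs $(\ell,j)$ and $(h,i)$ coincide only if $\ell=h$ and $j=i$, which is excluded. So the two factors are always distinct entries of $A$, hence independent, and
\[
\mathbb{E}^{(n)}[L]_{ij,h\ell}=\mathbbm{1}_{\{i\neq j\}}\,\mathbb{E}[A_{\ell j}]\,\mathbb{E}[A_{hi}]=\mathbbm{1}_{\{i\neq j\}}\big(\mathbb{E}[A^\top]\otimes\mathbb{E}[A^\top]\big)_{ij,h\ell}.
\]
In other words, $\mathbb{E}^{(n)}[L]=P_0(\bar A\otimes\bar A)$ with $\bar A:=\mathbb{E}[A^\top]$, where $P_0$ zeroes the diagonal coordinates $u=n(i-1)+i$. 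Using ${\rm vec}(BDC)=(C^\top\otimes B){\rm vec}(D)$, this gives, for any $n\times n$ matrix $D$,
\[
\mathbb{E}^{(n)}[L]\,{\rm vec}(D)={\rm vec}\big(d_0(\bar A D\bar A^\top)\big).
\]
This identity is the engine for everything else.

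\textbf{Item (i).} Take $D={\rm diag}(\mathbf{1}_{C_b})$. By \eqref{eq:gen:EA:Wbar}, $\bar A\,{\rm diag}(\mathbf{1}_{C_b})\,\bar A^\top=\bar A{\rm diag}(\mathbf{1}_{C_b})\bar A$, whose $(i,j)$ entry for $i\in C_a$, $j\in C_c$ is $n^{-2}|C_b|\mathcal{W}_{ab}\mathcal{W}_{bc}=n^{-1}\alpha_b\mathcal{W}_{ab}\mathcal{W}_{bc}$. Now substitute $W=\mathcal{W}{\rm diag}(\alpha)$. This gives $\alpha_b\mathcal{W}_{ab}\mathcal{W}_{bc}=W_{ab}\mathcal{W}_{bc}$, and $\mathcal{W}_{bc}=\mathcal{W}_{cb}=W_{cb}\alpha_b^{-1}$. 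The required form $W_{ab}W_{bc}\alpha_c^{-1}$ instead uses $\mathcal{W}_{bc}=W_{bc}\alpha_c^{-1}$, which is exactly \eqref{eq:gen:W:Wbar}. Applying $d_0$ and summing over the blocks $(a,c)$ yields (i).

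\textbf{Item (ii).} Take $D=d_0(\mathbf{1}_{C_a}\mathbf{1}_{C_b}^\top)=\mathbf{1}_{C_a}\mathbf{1}_{C_b}^\top-\mathbbm{1}_{\{a=b\}}{\rm diag}(\mathbf{1}_{C_a})$.
\begin{itemize}
\item For the first term, Lemma~\ref{lem:EAt:power:1Ca} with $u=1$ gives $\bar A\mathbf{1}_{C_a}\mathbf{1}_{C_b}^\top\bar A^\top=(\bar A\mathbf{1}_{C_a})(\bar A\mathbf{1}_{C_b})^\top=\sum_{c,d}W_{ca}W_{db}\mathbf{1}_{C_c}\mathbf{1}_{C_d}^\top$.
\item For the second term, the computation of (i) gives $n^{-1}\sum_{c,d}W_{ca}W_{ad}\alpha_d^{-1}\mathbf{1}_{C_c}\mathbf{1}_{C_d}^\top$.
\end{itemize}
To match (ii), rewrite $W_{ad}\alpha_d^{-1}=\mathcal{W}_{ad}=\mathcal{W}_{da}=W_{da}\alpha_a^{-1}$ and set $b=a$. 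The second term then becomes $(n\alpha_a)^{-1}\sum_{c,d}W_{ca}W_{da}\mathbf{1}_{C_c}\mathbf{1}_{C_d}^\top$. Subtracting it from the first term and applying $d_0$ gives (ii).

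\textbf{Item (iii).} For $u=1$, combine (i) with linearity: $\frac1n\sum_{a,c}\big(\sum_b W_{ab}\xi_bW_{bc}\alpha_c^{-1}\big)v^{(a,c)}$ has coefficient matrix $W{\rm diag}(\xi)W{\rm diag}(\alpha^{-1})=H^{(1)}$. For the inductive step, apply (ii) to $\frac1n\sum_{a,b}H^{(u)}_{ab}v^{(a,b)}$. The new coefficient of $v^{(c,d)}$ is
\[
\sum_{a,b}W_{ca}\big(H^{(u)}_{ab}-\tfrac{1}{n\alpha_a}\mathbbm{1}_{\{a=b\}}H^{(u)}_{aa}\big)W_{db},
\]
which is the $(c,d)$ entry of $W\big(H^{(u)}-\frac1n{\rm Diag}(H^{(u)}){\rm diag}(\alpha^{-1})\big)W^\top$, that is, of \eqref{eq:gen:H:rec}.

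The main point needing care is the independence step: it holds only because $i\neq j$ rules out the factor $A_{\ell j}A_{hi}$ being a square. The rest is index bookkeeping with the relation between $W$ and $\mathcal{W}$.
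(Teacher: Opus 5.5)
Your proof is correct and follows essentially the same route as the paper: you factor $\mathbb{E}^{(n)}[L]$ using independence of distinct entries of $A$ when $i\neq j$, compute the block sums via $W=\mathcal{W}\,{\rm diag}(\alpha)$ (with (ii) written as the rank-one term minus a diagonal correction), and induct for (iii). The only difference is presentational: you package the entrywise computation as $\mathbb{E}^{(n)}[L]\,{\rm vec}(D)={\rm vec}\big(d_0(\bar A D\bar A^{\top})\big)$, which handles $a=b$ and $a\neq b$ in (ii) at once, whereas the paper computes entrywise and writes out only the case $a=b$.
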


\begin{proof}
We start by proving (i). For all $i,j\in[n]$, using the
definition of $L$, the independence of $(A_{\ell i})_{\ell\in [n]}$ and $(A_{\ell j})_{\ell\in [n]}$
for $i\neq j$, and the first equality of \eqref{eq:gen:EA:Wbar}, we have
\begin{align*}
\big(\mathbb{E}^{(n)}[L]\,{\rm vec}({\rm diag}(\mathbf{1}_{C_b}))\big)_{ij}
&=\mathbbm{1}_{\{i\neq j\}}\sum_{\ell\in[n]}\mathbb{E}^{(n)}[A_{\ell i}A_{\ell j}]\,\mathbbm{1}_{\{\ell\in C_b\}}
\\
&=\mathbbm{1}_{\{i\neq j\}}\sum_{\ell\in[n]}\mathbb{E}^{(n)}[A_{i\ell}]\mathbb{E}^{(n)}[A_{\ell j}]\,\mathbbm{1}_{\{\ell\in C_b\}}\,.
\end{align*}
By the second equality of \eqref{eq:gen:EA:Wbar}, we have $\mathbb{E}^{(n)}[A_{i\ell}]=\frac{1}{n}\mathcal{W}_{ab}$ for $i\in C_a,\ \ell\in C_b$,
and $\mathbb{E}^{(n)}[A_{\ell j}]=\frac{1}{n}\mathcal{W}_{bc}$ for $\ell\in C_b,\ j\in C_c$.
Hence, using that $|C_b|=n\alpha_b$, we obtain that
\begin{align*}
\sum_{\ell\in[n]}\mathbb{E}^{(n)}[A_{i\ell}]\mathbb{E}^{(n)}[A_{\ell j}]\,\mathbbm{1}_{\{\ell\in C_b\}}
&=\sum_{a\in[k]}\sum_{c\in[k]}\mathbbm{1}_{\{i\in C_a\}}\mathbbm{1}_{\{j\in C_c\}}
\,\frac{1}{n^2}\sum_{\ell\in C_b}\mathcal{W}_{ab}\mathcal{W}_{bc}\\
&=\frac{1}{n}\sum_{a\in[k]}\sum_{c\in[k]}\mathcal{W}_{ab}\mathcal{W}_{bc}\,\alpha_b
\big(\mathbf{1}_{C_a}\mathbf{1}_{C_c}^{\top}\big)_{ij}\\
&=\frac{1}{n}\sum_{a\in[k]}\sum_{c\in[k]}W_{ab}W_{bc}\,\alpha_c^{-1}\,\Big(\mathbf{1}_{C_a}\mathbf{1}_{C_c}^{\top}\Big)_{ij}\,,
\end{align*}
where in the last equality we used $\mathcal{W}_{ab}\alpha_b=W_{ab}$ and
$\mathcal{W}_{bc}=W_{bc}\alpha_c^{-1}$, which follow from~\eqref{eq:gen:W:Wbar}. 
Combining the above identities, we deduce that 
\[
\big(\mathbb{E}^{(n)}[L]\,{\rm vec}({\rm diag}(\mathbf{1}_{C_b}))\big)_{ij}=\frac{1}{n}\sum_{a\in[k]}\sum_{c\in[k]}W_{ab}W_{bc}\,\alpha_c^{-1}\,\mathbbm{1}_{\{i\neq j\}}\Big(\mathbf{1}_{C_a}\mathbf{1}_{C_c}^{\top}\Big)_{ij},
\]
proving (i).

We now prove (ii). 
We only consider the case $a=b$. The case $a\neq b$ is treated similarly. Hence, suppose $a=b$. Take $i,j\in[n]$ and note that, by the definition of $v^{(a,a)}$ and the independence of $(A_{\ell i})_{\ell\in [n]}$ and $(A_{h i})_{h\in [n]}$
for $i\neq j$,
\begin{align*}
\big(\mathbb{E}^{(n)}[L]v^{(a,a)}\big)_{ij}
&=\mathbbm{1}_{\{i\neq j\}}\sum_{\substack{(h,\ell)\in C_a\times C_a}}\mathbbm{1}_{\{h\neq\ell\}}\mathbb{E}^{(n)}[A_{\ell j}A_{hi}]\\
&=\mathbbm{1}_{\{i\neq j\}}\sum_{\substack{(h,\ell)\in C_a\times C_a}}\mathbbm{1}_{\{h\neq\ell\}}\mathbb{E}^{(n)}[A_{hi}]\mathbb{E}^{(n)}[A_{\ell j}]\\
&=\mathbbm{1}_{\{i\neq j\}}\Big[\Big(\sum_{h\in C_a}\mathbb{E}^{(n)}[A_{hi}]\Big)\Big(\sum_{\ell\in C_a}\mathbb{E}^{(n)}[A_{\ell j}]\Big)
-\sum_{h\in C_a}\mathbb{E}^{(n)}[A_{hi}]\mathbb{E}^{(n)}[A_{hj}]\Big]\,.
\end{align*}
Next, using \eqref{eq:gen:EA:Wbar}, we can check that for $i\in C_b$ and $j\in C_c$,
\[
\sum_{h\in C_a}\mathbb{E}^{(n)}[A_{hi}]=\alpha_a\mathcal{W}_{ab}\, \ \text{and} \ 
\sum_{h\in C_a}\mathbb{E}^{(n)}[A_{hi}]\mathbb{E}^{(n)}[A_{hj}]=\frac{\alpha_a}{n}\mathcal{W}_{ab}\mathcal{W}_{ac}.
\]
Putting together the above identities, we then obtain that 
\[
\big(\mathbb{E}^{(n)}[L]v^{(a,a)}\big)_{ij}
=\sum_{b\in[k]}\sum_{c\in[k]}\Big(\alpha_a^2\mathcal{W}_{ab}\mathcal{W}_{ac}
-\frac{\alpha_a}{n}\mathcal{W}_{ab}\mathcal{W}_{ac}\Big)\mathbbm{1}_{\{i\neq j\}}\big(\mathbf{1}_{C_b}\mathbf{1}_{C_c}^{\top}\big)_{ij}\,.
\]
Using $W_{ba}=\mathcal{W}_{ab}\alpha_a$ and $W_{ca}=\mathcal{W}_{ac}\alpha_a$
(see~\eqref{eq:gen:W:Wbar} and the symmetry of $\mathcal{W}$), the term inside the bracket is equal to
$W_{ba}W_{ca}\big(1-\tfrac{1}{n\alpha_a}\big)$, whence
\[
\mathbb{E}^{(n)}[L]v^{(a,a)}=\sum_{b\in[k]}\sum_{c\in[k]}W_{ba}W_{ca}\Big(1-\frac{1}{n\alpha_a}\Big)v^{(b,c)}\,,
\]
showing (ii) for $a=b$.

Finally, we prove (iii) by induction on $u\geq 1$. For $u=1$, using (i), we have
\begin{align*}
\mathbb{E}^{(n)}[L]\Big(\sum_{b\in[k]}\xi_b\,{\rm vec}({\rm diag}(\mathbf{1}_{C_b}))\Big)
&=\sum_{b\in[k]}\xi_b\,\frac{1}{n}\sum_{a\in[k]}\sum_{c\in[k]}W_{ab}W_{bc}\,\alpha_c^{-1}\,v^{(a,c)}\\
&=\frac{1}{n}\sum_{a\in[k]}\sum_{c\in[k]}\Big(\sum_{b\in[k]}W_{ab}\xi_b W_{bc}\,\alpha_c^{-1}\Big)v^{(a,c)}\\
&=\frac{1}{n}\sum_{a\in[k]}\sum_{c\in[k]}\big(H^{(1)}\big)_{ac}v^{(a,c)}\,,
\end{align*}
with $H^{(1)}=W{\rm diag}(\boldsymbol{\xi})W{\rm diag}(\boldsymbol{\alpha^{-1}})$. Next, suppose the
identity holds for some $u\geq 1$. Using the inductive hypothesis and (ii), we obtain that
\begin{align*}
\big(\mathbb{E}^{(n)}[L]\big)^{u+1}\Big(\sum_{b\in [k]}\xi_b\,{\rm vec}({\rm diag}(\mathbf{1}_{C_b}))\Big)
&=\frac{1}{n}\sum_{a,c\in [k]}\big(H^{(u)}\big)_{ac}\,\mathbb{E}^{(n)}[L]v^{(a,c)}\\
&=\frac{1}{n}\sum_{c',d\in [k]}\Big(\sum_{a,c\in [k]}W_{c'a}\big(H^{(u)}\big)_{ac}W_{dc}\Big(1-\tfrac{1}{\alpha_a n}\mathbbm{1}_{\{a=c\}}\Big)\Big)v^{(c',d)}.
\end{align*}
Observing that 
\begin{align*}
\sum_{a,c\in [k]}W_{c'a}\big(H^{(u)}\big)_{ac}W_{dc}\Big(1-\tfrac{1}{\alpha_a n}\mathbbm{1}_{\{a=c\}}\Big)
&=\Big(WH^{(u)}W^{\top}
-\frac{1}{n}W{\rm Diag}(H^{(u)})\,{\rm diag}(\alpha^{-1})W^{\top}\Big)_{c'd}\\
&=H^{(u+1)}_{c'd},
\end{align*}
we then conclude that 
\[
\big(\mathbb{E}^{(n)}[L]\big)^{u+1}\Big(\sum_{b\in [k]}\xi_b\,{\rm vec}({\rm diag}(\mathbf{1}_{C_b}))\Big)=\frac{1}{n}\sum_{c',d\in [k]}H^{(u+1)}_{c'd}v^{(c',d)},
\]
proving (iii).
\end{proof}

\section{Proof of Lemma \ref{lem:random_NFC_welldefined}}
\label{sec:tech:lemmas}

In this section, we prove Lemma \ref{lem:random_NFC_welldefined}.



\begin{proof}[Proof of Lemma~\ref{lem:random_NFC_welldefined}]
Let us begin observing that 
\[
\mathbb{P}^{(n)}((\Theta,H)\in \Omega^{adm}) = \mathbb{P}^{(n)}\big((\Theta,H): \lambda \in \big(\vertiii{A^{\top}_{(\Theta,H)}}_{\infty}, 1-\vertiii{A^{\top}_{(\Theta,H)}}_{\infty}\big) \big)\,.
\]
Since $H_{ij} \in \{-1, +1\}$, in order to show that $\lambda \in \big(\vertiii{A^{\top}_{(\Theta,H)}}_{\infty}, 1-\vertiii{A^{\top}_{(\Theta,H)}}_{\infty}\big)  $
it suffices that  
\begin{equation}\label{eq:suff}
\max_{a \in [k]}\max_{ i \in C_a}\Big\{\frac{\mu_a}{n} \sum_{j:j \overset{a}{\sim} i} \Theta_{ji}  + \frac{\Out}{n} \sum_{j: j \nsim i} \Theta_{ji}\Big\} <\lambda < 1- \max_{a \in [k]}\max_{ i \in C_a}\Big\{\frac{\mu_a}{n} \sum_{j:j \overset{a}{\sim} i} \Theta_{ji}  + \frac{\Out}{n} \sum_{j: j \nsim i} \Theta_{ji}\Big\} \,.
\end{equation}

Using that $\mathbb{E}^{(n)}[\Theta_{ji}]=p_a$ if $i \overset{a}{\sim} j$ for $a \in [k]$ and $\mathbb{E}^{(n)}[\Theta_{ji}]=q$ if $i \nsim j$, and defining for $a \in [k]$ and $i \in C_a$
\[
Y_{a,i}:=\frac{\mu_a}{n} \sum_{j:j \overset{a}{\sim} i} \big(\Theta_{ji}-p_a\big) + \frac{\Out}{n} \sum_{j: j \nsim i} \big(\Theta_{ji}-q\big)\,,
\]
we can rewrite \eqref{eq:suff}  (recall that $|C_a|=\alpha_a n$)  as
\[
\max_{a \in [k]}\big\{\max_{ i \in C_a} Y_{a,i} + \mu_a\alpha_a p_a + \Out(1-\alpha_a) q  \big\} <\lambda < 1- \max_{a \in [k]}\big\{\max_{ i \in C_a} Y_{a,i} + \mu_a\alpha_a p_a + \Out(1-\alpha_k) q  \big\} \,.
\]
Setting  $M=\max_{a \in [k]}\{\mu_a\alpha_a p_a+\Out(1-\alpha_a)q\}$, we obtain that  to verify the claim,  it is enough to show 
\begin{equation}\label{eq:suff2}
  \mathbb{P}^{(n)}\Big( M + \max_{a \in [k]}\max_{ i \in C_a} Y_{a,i}  <\lambda < 1- M - \max_{a \in [k]}\max_{ i \in C_a} Y_{a,i}\Big)\geq 1-2n e^{-cn}\,,  
\end{equation}
where is a constant $c>0$ independent of $n$, depending on the parameters $\lambda, (\alpha_a)_{a \in [k]}$, $(\mu_a)_{a \in [k]}$, $\Out, (p_a)_{a \in [k]}$ and $q$.

Observe that, for any fixed $a \in [k]$ and any $i \in C_a$,  the random variables $\{\frac{\mu_a}{n}(\Theta_{ji}-p_a)\}_{j\overset{a}{\sim} i}$ are i.i.d. and assume values in the bounded interval  $[-\frac{\mu_a}{n}p_a, \frac{\mu_a}{n}(1-p_a) ]$. Thus, by Hoeffding's Lemma, they are subgaussian with variance factor $\frac{\mu_a^2}{4n^2}$.  Moreover, by independence, we have that $\frac{\mu_a}{n} \sum_{j: j \overset{a}{\sim} i} \big(\Theta_{ji}-p_a\big)$ is subgaussian with variance factor $\frac{\mu_a^2}{4n^2}|C_a|$. 
Similarly, we have that  $\frac{\Out}{n} \sum_{j \nsim i} \big(\Theta_{ji}-q\big)$ is subgaussian with variance factor $\frac{\Out^2}{4n^2}(n-|C_a|)$. Using again independence, we obtain that for any $a \in [k]$ and $i \in C_a$ the random variable $Y_{a,i}$ is subgaussian with parameter  $\frac{\mu_a^2}{4n}\alpha_a + \frac{\Out^2}{4n}(1-\alpha_a)$. Setting  
$\sigma^2:=\max_{a \in [k]}\{\mu_a^2\alpha_a + \Out^2 (1-\alpha_a)  \}$, we obtain that the random variables $\{Y_{a,i}\}_{a \in [k], i \in C_a}$ are subgaussians with variance factor $\frac{1}{4n}\sigma^2$. Moreover, $\mathbb{E}^{(n)}[Y_{a,i}]=0$, for all $a \in [k]$ and all $i \in C_a$.  Thus, for all for all $a \in [k], i \in C_a$ and all $y>0$, it holds that 
\[
\mathbb{P}^{(n)}(Y_{a,i}>y) \leq  e^{-\frac{y^2 4n}{2 \sigma^2}} \quad \overset{\text{Union bound}}{\implies} \quad \mathbb{P}^{(n)}\big(\max_{a \in [k]}\max_{i \in C_a}Y_{a,i}>y\big) \leq n e^{-\frac{y^2 4n}{2 \sigma^2}}\,.
\]
By the assumption  $M<\lambda<1-M$, setting  first $y=\lambda -M>0$ and then $y = 1-M-\lambda>0$,   we obtain 
\begin{align*}
\mathbb{P}^{(n)}(M + \max_{a \in [k]}\max_{i \in C_a}Y_{a,i}>\lambda ) & \leq ne^{-\frac{(\lambda - M)^2 4}{2 \sigma^2}n}\,,
\\
\mathbb{P}^{(n)}(\lambda >1-M - \max_{a \in [k]}\max_{i \in C_a}Y_{a,i} ) &\leq ne^{-\frac{(1-M-\lambda)^2 4}{2 \sigma^2}n}\,,
\end{align*}
which combined imply \eqref{eq:suff2}. 
\end{proof}

\section{Community identifiability}
\label{app:proof:sec:community:ident}

In this section, we provide the proofs of the results of Section \ref{sec:detection}.

We start proving Lemma \ref{lemma:identificability:through:the:mean}. 

\begin{proof}[Proof of Lemma \ref{lemma:identificability:through:the:mean}]
Suppose $W\mathbf{1}_k=c\mathbf{1}_k$. Since \eqref{eq:wellposed_random_scenario} is granted, then necessarily $|c| <1$.  In this case, we have $W^u\mathbf{1}_k=c^u\mathbf{1}_k$ for all $u\geq 0$, so that $\zeta=(\sum_{u=0}^{\infty}c^u)\mathbf{1}_k=(1-c)^{-1}\mathbf{1}_k$. 

For the other direction, suppose $\zeta=c\mathbf{1}_k$. Since \eqref{eq:wellposed_random_scenario} is granted, $\zeta$ solves the equation $\zeta=\mathbf{1}_k+W\zeta$.  It then follows that $c\mathbf{1}_k=\mathbf{1}_k+cW\mathbf{1}_k$, and  \eqref{eq:wellposed_random_scenario} assures that $c>0$. Then, we must have that  $W\mathbf{1}_k=\frac{c-1}{c}\mathbf{1}_k$.
\end{proof}

Next, we prove Proposition \ref{prop:limit:lagged:cov:kappa}. In the proof, we shall use the following proposition, which is proved at the end  of this section.

\begin{proposition}
\label{prop:properties:limit:cov:matrix}
Let $W$ be the matrix defined in \eqref{def:matrix_G}. 
Suppose \eqref{eq:wellposed_random_scenario} and \eqref{def:kappa} are granted. Denote $\xi=\lambda\zeta_1(1-\lambda\zeta_1)$. Let $\gamma$ and $\rho_{\nsim}$ as defined in \eqref{def:gamma:rhoout}. Then, the matrix $\Psi$ defined in \eqref{def:Psi} satisfies
\begin{equation}\label{eq:Psi:kappa:closed}
\Psi=\xi_1 {\cal G}_{\geq 1}(W^2){\rm diag}(\alpha^{-1})=\xi_1\big(\gamma \, {\rm diag}(\alpha^{-1})+\rho_{\nsim}\mathbf{1}_k(\mathbf{1}_k)^{\top}\big)\,,   
\end{equation}
where $\alpha^{-1}=(\alpha^{-1}_1,\ldots,\alpha^{-1}_k)$ and $\xi_1$ is defined in \eqref{def:xi:vector}. 
\end{proposition}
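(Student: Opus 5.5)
Under \eqref{def:kappa}, all entries of $\zeta$ coincide by Lemma~\ref{lemma:identificability:through:the:mean}. Hence $\xi=\xi_1\mathbf{1}_k$ and ${\rm diag}(\xi\odot\alpha^{-1})=\xi_1{\rm diag}(\alpha^{-1})$. Inserting this into \eqref{def:Psi} and using the commutation relation $W^{u}{\rm diag}(\alpha^{-1})={\rm diag}(\alpha^{-1})(W^{\top})^{u}$ (obtained by iterating \eqref{eq:gen:commute}) read backwards, i.e.\ ${\rm diag}(\alpha^{-1})(W^{\top})^{u}=W^{u}{\rm diag}(\alpha^{-1})$, gives
\[
\Psi=\xi_1\sum_{u\geq1}W^{u}W^{u}{\rm diag}(\alpha^{-1})=\xi_1\,{\cal G}_{\geq1}(W^2)\,{\rm diag}(\alpha^{-1}).
\]
The series converges because $\vertiii{W}_{\infty}\leq M<1/2$. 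This is the first equality in \eqref{eq:Psi:kappa:closed}.

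For the closed form, I would decompose $W$ explicitly. By \eqref{eq:gen:W:Wbar}, $W=\mathcal{W}{\rm diag}(\alpha)$, and
\[
\mathcal{W}=(2\beta-1)\big(\Out q\,\mathbf{1}_k\mathbf{1}_k^{\top}+{\rm diag}(\mu_ap_a-\Out q)\big).
\]
Condition \eqref{def:kappa} gives ${\rm diag}(\mu_ap_a-\Out q){\rm diag}(\alpha)=\kappa I_k$. Writing $c:=2\beta-1$, this yields
\[
W=c\kappa I_k+c\Out q\,\mathbf{1}_k\alpha^{\top}.
\]
Set $P:=\mathbf{1}_k\alpha^{\top}$; it is idempotent since $\alpha^{\top}\mathbf{1}_k=1$. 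Thus $W=c\kappa(I_k-P)+c(\kappa+\Out q)P$. Since $W\mathbf{1}_k=c(\kappa+\Out q)\mathbf{1}_k$, and by Lemma~\ref{lemma:identificability:through:the:mean} and Remark~\ref{rmk:1} this common row sum equals $c\,M$, we get $\kappa+\Out q=M$. (Here $M$ in \eqref{def:M} reduces to $\alpha_b\mu_bp_b+\Out(1-\alpha_b)q=\kappa+\Out q$.) Then for all $u\geq1$,
\[
W^{2u}=(c\kappa)^{2u}(I_k-P)+(cM)^{2u}P,
\]
because $I_k-P$ and $P$ are complementary idempotents. Summing the geometric series gives
\[
{\cal G}_{\geq1}(W^2)=\gamma(I_k-P)+\gamma_M P,\qquad \gamma_M:=(cM)^2(1-(cM)^2)^{-1},
\]
with $\gamma$ as in \eqref{def:gamma:rhoout}.

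Multiplying on the right by ${\rm diag}(\alpha^{-1})$ and using $P{\rm diag}(\alpha^{-1})=\mathbf{1}_k\mathbf{1}_k^{\top}$ gives
\[
\Psi=\xi_1\big(\gamma\,{\rm diag}(\alpha^{-1})+(\gamma_M-\gamma)\mathbf{1}_k\mathbf{1}_k^{\top}\big).
\]
It remains to check $\gamma_M-\gamma=\rho_{\nsim}$. Put $x=(c\kappa)^2$ and $y=(cM)^2$. Then
\[
\gamma_M-\gamma=\frac{y}{1-y}-\frac{x}{1-x}=\frac{y-x}{(1-x)(1-y)}=\frac{x}{1-x}\cdot\frac{y/x-1}{1-y}=\gamma\,\frac{(M/\kappa)^2-1}{1-(2\beta-1)^2M^2},
\]
which is exactly $\rho_{\nsim}$. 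This uses $\kappa\neq0$, as assumed.

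The main obstacle is the identification $\kappa+\Out q=M$, which relies on all the maxima in \eqref{def:M} being equal under \eqref{def:kappa}, and keeping careful track of the factor $c=2\beta-1$. The rest is bookkeeping with the two idempotents.
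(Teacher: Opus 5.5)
Your proof is correct. The first equality is obtained exactly as in the paper: collapse $\xi$ to $\xi_1\mathbf{1}_k$ and move ${\rm diag}(\alpha^{-1})$ through $(W^{\top})^u$ via \eqref{eq:gen:commute}. For the closed form, however, you take a genuinely different route.

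\textbf{The paper's route.} It uses \eqref{eq:Wt:diag:kappa} with $t=2$ to write $W^2{\rm diag}(\alpha^{-1})$ as a diagonal matrix plus a rank-one term, and inverts it by Sherman--Morrison. It then forms the inverse $(W^2{\rm diag}(\alpha^{-1}))^{-1}-{\rm diag}(\alpha)$ of ${\cal G}_{\geq1}(W^2){\rm diag}(\alpha^{-1})$ and applies Sherman--Morrison a second time. This requires invertibility of $W^2$, nonvanishing of two Sherman--Morrison denominators (so implicitly $\kappa\neq0$ and $M\neq0$), and a fair amount of error-prone algebra.

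\textbf{Your route.} You observe that $W=c\kappa(I_k-P)+cMP$ with complementary idempotents $P=\mathbf{1}_k\alpha^{\top}$ and $I_k-P$. This is the spectral decomposition of $W$: eigenvalue $cM$ on $\mathbf{1}_k$, and eigenvalue $c\kappa$ on $\{v:\alpha^{\top}v=0\}$. Every power and geometric series then becomes immediate, and no inverses are needed. As a bonus, the identity \eqref{eq:Wt:diag:kappa}, which the paper proves separately by induction, drops out for all $t$ at once, since $(I_k-P){\rm diag}(\alpha^{-1})={\rm diag}(\alpha^{-1})-\mathbf{1}_k\mathbf{1}_k^{\top}$. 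In your argument, $\kappa\neq0$ enters only when you rewrite $\gamma_M-\gamma$ in the paper's form of $\rho_{\nsim}$.

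\textbf{One point to make explicit.} You should say why $|c\kappa|<1$, so that $\sum_{u\geq1}(c\kappa)^{2u}=\gamma$. For $k\geq2$, $c\kappa$ is an eigenvalue of $W$, hence $|c\kappa|\leq\rho(W)\leq\vertiii{W}_{\infty}\leq M<1/2$. For $k=1$ the factor $I_k-P$ vanishes, so only the well-definedness of $\gamma$ matters.

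\textbf{A minor simplification.} The appeal to Lemma~\ref{lemma:identificability:through:the:mean} and Remark~\ref{rmk:1} for $\kappa+\Out q=M$ is unnecessary. Your direct substitution of \eqref{def:kappa} into \eqref{def:M} already gives it.
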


\begin{proof}[Proof of Proposition \ref{prop:limit:lagged:cov:kappa}]

First, summing $W_{ab}=(2\beta-1)\big(\mu_b\alpha_b p_b\mathbbm{1}_{\{a=b\}}+\Out\alpha_b q\mathbbm{1}_{\{a\neq b\}}\big)$
over $b\in [k]$ and using that 
$\mu_a\alpha_a p_a+\Out q(1-\alpha_a)=\kappa+\Out q=M$ for all $a\in [k]$ (this holds by \eqref{def:kappa}),
we deduce that $(W\mathbf{1}_k)_a=(2\beta-1)M$ for all $a\in [k]$, i.e. $W\mathbf{1}_k=(2\beta-1)M\,\mathbf{1}_k.$ As a consequence of this identity, we obtain that for all $t\geq 0$,
\begin{equation}\label{eq:W:one:kappa}
W^{t}\mathbf{1}_k\mathbf{1}_k^{\top}=(2\beta-1)^{t}M^{t}\,\mathbf{1}_k\mathbf{1}_k^{\top}.
\end{equation}
Second, observe that by \eqref{eq:gen:commute}, we have that $W{\rm diag}(\alpha^{-1})=\mathcal{W}$. Using this,
\eqref{def:gen:Wbar} and the fact that $\Out q=M-\kappa$, together with $\mu_a p_a-\Out q=\kappa\alpha_a^{-1}$ (which holds by \eqref{def:kappa}), we then deduce that
\begin{equation}\label{eq:cW:kappa}
W{\rm diag}(\alpha^{-1})=\mathcal{W}=(2\beta-1)\kappa\,{\rm diag}(\alpha^{-1})+(2\beta-1)(M-\kappa)\,\mathbf{1}_k\mathbf{1}_k^{\top}\,,
\end{equation}

{\bf Claim.} For every integer $t\geq0$,
\begin{equation}\label{eq:Wt:diag:kappa}
W^{t}{\rm diag}(\alpha^{-1})
=(2\beta-1)^{t}\kappa^{t}\,{\rm diag}(\alpha^{-1})
+(2\beta-1)^{t}\big(M^{t}-\kappa^{t}\big)\,\mathbf{1}_k\mathbf{1}_k^{\top}\,.
\end{equation}

{\it Proof of the Claim.}
We argue by induction on $t$. The case $t=0$ is immediate. Moreover, note that the case $t=1$ is \eqref{eq:cW:kappa}. Now, assume \eqref{eq:Wt:diag:kappa}
holds for some $t\geq1$. Using that $W{\rm diag}(\alpha^{-1})=\mathcal W$,
\eqref{eq:cW:kappa} and \eqref{eq:W:one:kappa} with $t=0$, we obtain
\begin{align*}
W^{t+1}{\rm diag}(\alpha^{-1})
&=W\big(W^{t}{\rm diag}(\alpha^{-1})\big)\\
&=W\big((2\beta-1)^{t}\kappa^{t}{\rm diag}(\alpha^{-1})+(2\beta-1)^{t}(M^t-\kappa^t)\mathbf{1}_k\mathbf{1}_k^{\top}\big)\\
&=(2\beta-1)^{t}\kappa^{t}\,\mathcal{W}
+(2\beta-1)^{t}\big(M^{t}-\kappa^{t}\big)(2\beta-1)M\,\mathbf{1}_k\mathbf{1}_k^{\top}\\
&=(2\beta-1)^{t+1}\kappa^{t+1}\,{\rm diag}(\alpha^{-1})
+(2\beta-1)^{t+1}\Big[\kappa^{t}(M-\kappa)+M\big(M^{t}-\kappa^{t}\big)\Big]\mathbf{1}_k\mathbf{1}_k^{\top}\,.
\end{align*}
Combining the above identity with the fact that $\kappa^{t}(M-\kappa)+M(M^{t}-\kappa^{t})=M^{t+1}-\kappa^{t+1}$, the claim for $t+1$ follows.
\qed

Next, combining \eqref{eq:Psi:kappa:closed}, \eqref{eq:Wt:diag:kappa} and \eqref{eq:W:one:kappa}, we obtain that for all integer $t\geq 0$,
\begin{align*}
W^{t}\Psi
&=\xi_1\gamma\,W^{t}{\rm diag}(\alpha^{-1})+\xi_1\rho_{\nsim}\,W^{t}\mathbf{1}_k\mathbf{1}_k^{\top}\\
&=\xi_1(2\beta-1)^{t}\kappa^{t}\gamma\,{\rm diag}(\alpha^{-1})
+\xi_1(2\beta-1)^{t}\big(\gamma(M^{t}-\kappa^{t})+\rho_{\nsim}M^{t}\big)\mathbf{1}_k\mathbf{1}_k^{\top}\,,
\end{align*}
that is, for all $a,b\in[k]$,
\begin{equation}\label{eq:WtPsi:entry}
(W^{t}\Psi)_{ab}
=\xi_1(2\beta-1)^{t}\kappa^{t}\gamma\,\alpha_a^{-1}\mathbbm{1}_{\{a=b\}}
+\xi_1(2\beta-1)^{t}\big(\gamma(M^{t}-\kappa^{t})+\rho_{\nsim}M^{t}\big)\,.
\end{equation}

Substituting~\eqref{eq:WtPsi:entry} into
$\frac1n\sum_{a,b\in[k]}(W^{t}\Psi)_{ab}\mathbf{1}_{C_a}\mathbf{1}_{C_b}^{\top}$ and using
$\sum_{a,b}\mathbf{1}_{C_a}\mathbf{1}_{C_b}^{\top}=\mathbf{1}_n\mathbf{1}_n^{\top}$, one can then check that for all integer $t\geq 0$, 
\begin{equation}\label{eq:block:kappa}
\frac1n\sum_{a,b\in[k]}(W^{t}\Psi)_{ab}\,\mathbf{1}_{C_a}\mathbf{1}_{C_b}^{\top}
=\xi_1\big(S^{(t)}+\overline{B}^{(t)}\big)\,,\quad t\geq 0.
\end{equation}
Since $\overline{\Sigma}^{(0)}$ corresponds to the left-hand side
of \eqref{eq:block:kappa} with $t=0$ (note that $W^{0}=I_k$), we conclude that $\overline{\Sigma}^{(0)}=\xi_1(S^{(0)}+\overline{B}^{(0)})$.

Finally, note that under~\eqref{def:kappa} the entries of $\zeta$ are all equal
(see Lemma~\ref{lemma:identificability:through:the:mean}), so that $\xi=\xi_1\mathbf{1}_k$
and $\overline{v}=\xi_1\mathbf{1}_n$, which in turn implies ${\rm diag}(\overline{v})=\xi_1 I_n$. Using this and \eqref{eq:block:kappa} for $t\geq 1$, and recalling the definition of $\overline{\Sigma}^{(t)}_{(\Theta,H)}$ in \eqref{def:gen:Gamma:t}, we obtain that  
\begin{align*}
\overline{\Sigma}^{(t)}_{(\Theta,H)}&=(A^{\top})^{t}{\rm diag}(\overline{v})
+\frac1n\sum_{a,b}(W^{t}\Psi)_{ab}\mathbf{1}_{C_a}\mathbf{1}_{C_b}^{\top}=\xi_1\big((A^{\top})^{t}+S^{(t)}+\overline{B}^{(t)}\big)\,,
\end{align*}
and the result follows.
\end{proof}

Finally, we prove Proposition ~\ref{prop:properties:limit:cov:matrix}.

\begin{proof}[Proof of Proposition~\ref{prop:properties:limit:cov:matrix}]

 We will begin showing that $\Psi=\xi_1 {\cal G}_{\geq 1}(W^2){\rm diag}(\alpha^{-1})$. To show this identity, first note that under Condition~\eqref{def:kappa}, we have $\xi=\xi_1\mathbf{1}_k$. Then, use this and \eqref{eq:gen:commute} to obtain that
\begin{equation*}
\Psi=\xi_1\sum_{u=1}^{\infty}W^{u}{\rm diag}(\alpha^{-1})(W^{\top})^{u}=\xi_1\sum_{u=1}^{\infty}W^{2u}{\rm diag}(\alpha^{-1})
=\xi_1\,{\cal G}_{\geq 1}(W^{2})\,{\rm diag}(\alpha^{-1})\,,
\end{equation*}
concluding the first half of the proof.

Hence, to conclude the proof, it suffices to show that ${\cal G}_{\geq 1}(W^{2})\,{\rm diag}(\alpha^{-1})=\gamma \, {\rm diag}(\alpha^{-1})+\rho_{\nsim}\mathbf{1}_k(\mathbf{1}_k)^{\top}$.
This is can be done as follows. 

First of all, let us check that the matrix $W^2\text{diag}(\alpha^{-1})$ is invertible. To that end, first observe that by \eqref{eq:Wt:diag:kappa} with $t=2$, we can rewrite $W^2\text{diag}(\alpha^{-1})$ as 
\begin{equation*}
W^2\text{diag}(\alpha^{-1})=(2\beta-1)^2\kappa^2\text{diag}(\alpha^{-1})+\phi_{\nsim}\mathbf{1}_k(\mathbf{1}_{k})^{\top}\,,    
\end{equation*}
where $\phi_{\nsim}=(2\beta-1)^2(\mu_{\nsim}q)(\mu_{\nsim}q+2\kappa)$.
Hence, the matrix $W^2\text{diag}(\alpha^{-1})$ can be written as a sum of an invertible matrix and a rank 1 matrix. Since
\begin{equation*}
1+\phi_{\nsim}\mathbf{1}^\top_k  \big((2\beta-1)^2\kappa^2)^{-1}\text{diag}(\alpha)\big)\mathbf{1}_k=\frac{(\mu_{\nsim}q+\kappa)^2}{\kappa^2}=\frac{(W\mathbf{1}_k)_1}{(2\beta-1)^2 \kappa^2}\neq 0\,,
\end{equation*}
it then follows from Sherman-Morrison formula that $W^2\text{diag}(\alpha^{-1})$ is invertible and its inverse is given by  
\begin{equation}
\label{def:inverse:W2diag}
\frac{1}{(2\beta-1)^2\kappa^2}\Big(\text{diag}(\alpha)-\frac{\mu_{\nsim}q(\mu_{\nsim}q+2\kappa)}{(\mu_{\nsim}q+\kappa)^2}\alpha(\alpha)^{\top}\Big)\,,   
\end{equation}
where $\alpha=(\alpha_1,\ldots,\alpha_k)$.
Since $\text{diag}(\alpha)$ is clearly invertible, the above argument also proves that $W^2$ is invertible.

Next, we use \eqref{def:inverse:W2diag} and the fact that $W^2$ is invertible to conclude the proof of the proposition.
Using that 
$
{\cal G}_{\geq 1}(W^2)={\cal G}(W^2)W^2    
$
together with \eqref{def:inverse:W2diag} and the fact that $W^2$ and $W^2\text{diag}(\alpha^{-1})$ are invertible, we deduce that ${\cal G}_{\geq 1}(W^2)\text{diag}(\alpha^{-1})$ is also invertible and its inverse is 
\begin{align*}
(W^2\text{diag}(\alpha^{-1}))^{-1}({\cal G}(W^2))^{-1}&=(W^2\text{diag}(\alpha^{-1}))^{-1}(I_k-W^2)
\\
&=(W^2\text{diag}(\alpha^{-1}))^{-1}I_k-\text{diag}(\alpha)(W^2)^{-1}W^2\\
&=(W^2\text{diag}(\alpha^{-1}))^{-1}-\text{diag}(\alpha)\\
&=\frac{1-(2\beta-1)^2\kappa^2}{(2\beta-1)^2\kappa^2}\text{diag}(\alpha)-\frac{\mu_{\nsim}q(\mu_{\nsim}q+2\kappa)}{(\mu_{\nsim}q+\kappa)^2(2\beta-1)^2\kappa^2}\alpha(\alpha)^{\top}\,. 
\end{align*}
 
Using the  identity $M=\kappa+\mu_{\nsim}q$ (thus,  $\kappa<M$) and the fact that when \eqref{eq:wellposed_random_scenario} holds $0<M< 1/2$, we have that 
\[
1 -  \frac{\mu_{\nsim}q(\mu_{\nsim}q+2\kappa)}{(\mu_{\nsim}q+\kappa)^2(1-(2\beta-1)^2\kappa^2)}=\frac{k^2(1- M^2(2\beta -1)^2)}{M^2(1- k^2(2\beta -1)^2)}\neq 0\,.
\]
Thus, applying  Sherman-Morrison formula in the above matrix and setting $\gamma=\frac{(2\beta -1)^2 \kappa^2}{1-(2\beta-1)^2 \kappa^2}$, we obtain that 
\[
{\cal G}_{\geq 1}(W^2)\text{diag}(\alpha^{-1}) = \gamma \text{diag}(\alpha^{-1}) + \rho_{\nsim} \mathbf{1}_k\mathbf{1}_k^\top \,,
\]
where $\rho_{\nsim}=\gamma((M/\kappa)^2-1)(1-(2\beta-1)^2M^2)^{-1}$, concluding the proof.
\end{proof}

\section{Proof of the preliminary results}
\label{sec:appendix:proof:priliminary:results}

In the section, we prove the results stated in Section \ref{sec:preliminary:results}, namely, 
Propositions \ref{prop:mean}, \ref{prop:cov:vec:form} and \ref{prop:t-laggedCov}. 

We start proving Proposition \ref{prop:mean}.

\begin{proof}[Proof of Proposition \ref{prop:mean}]
For each $i \in \left[ n \right]$, denote $m_i=\mathbb{E}_G\left[ X_{i,t}\right]$ and observe that $m_i$ can be writen as follows: 
\begin{equation*}
    m_{i} = \mathbb{P}_G\left( X_{i,t}=1 \right) = \mathbb{E}_G\left[ \mathbb{P}_G\left( X_{i,t}=1 | X_{t-1} \right) \right]\,.
\end{equation*}
Using \eqref{eq:firingProb} in the latter expression, we obtain that 
\begin{equation}\label{eq:average}
    \begin{split}
        m_{i} & = \mathbb{E}_G\Big[ \lambda + \sum_{a \in [k]} \mathbbm{1}_{\{i \in C_a\}} \Big(\frac{\mu_a}{n}\sum_{j: j\overset{a}{\sim}i}  \theta_{ji} \eta_{ji} \, X_{j,t-1} + \frac{\Out}{n} \sum_{j: j \nsim i} \theta_{ji} \eta_{ji} \,  X_{j,t-1} \Big)  \Big] 
        \\
        & = \lambda + \sum_{a \in [k]} \mathbbm{1}_{\{i \in C_a\}} \Big( \frac{\mu_a}{n}\sum_{j: j\overset{a}{\sim}i}  \theta_{ji} \eta_{ji} \mathbb{E}_G\left[ X_{j, t-1} \right] + \frac{\Out}{n} \sum_{j: j \nsim i} \theta_{ji} \eta_{ji}\mathbb{E}_G\left[ X_{j, t-1} \right] \Big)
        \\
        &=
        \lambda + \sum_{a \in [k]} \mathbbm{1}_{\{i \in C_k\}} \Big(\frac{\mu_a}{n}\sum_{j:j\overset{a}{\sim}i}  \theta_{ji} \eta_{ji}m_j + \frac{\Out}{n} \sum_{j: j \nsim i} \theta_{ji} \eta_{ji}m_j\Big)
        \,,
    \end{split}
\end{equation}

where in the third equality we used the assumption that the Markov chain $\{X_t\}_{t\geq 0}$ is stationary implying that $m_{i}=\mathbb{E}_G[X_{i,t}]$, for all $t$. 
Since \eqref{eq:average} holds for every $i \in [n]$, setting $m=( m_{1}, \cdots, m_{n})^{\top}=\Ex_G[X_0]$, we can rewrite the system of equation in a matrix form as follows:
\begin{equation*}
    m = \lambda \mathbf{1}_{n} + A^{\top} m  \iff  \big( I_n - A^{\top} \big)m  = \lambda \mathbf{1}_{n}\,.
\end{equation*}
Under the assumption that \eqref{eq:wellposed} holds, the matrix $\left(I_n - A^{\top} \right)$ is invertible  and  the claim follows.
\end{proof}

Next, we will prove Proposition \ref{prop:cov:vec:form}. As we will see, the proof that we present relies on the following result. 
%

\begin{proposition}[Synchronous firing covariance]\label{prop:cov}
Let $[n]$ be the set of neurons, $\mathcal{C}=\{C_1, \ldots,  C_k\}$  the $k$ communities, $G=(\mathcal{C}, \theta, \eta)$ the interaction graph and $\lambda\in[0,1]$ the spontaneous activity. Assume that the corresponding interaction  matrix $A$ and $\lambda$ satisfy~\eqref{eq:wellposed}. 
Then, the synchronous firing covariance matrix is the unique solution of the following matrix equation
\begin{equation}\label{eq:matrixEquation}
    \Sigma^{(0)}_G = d_0(A^{\top} \Sigma^{(0)}_G A) 
    + {\rm diag}(v_G(X_0))\,, 
\end{equation}
where $v_G(X_0)$ is the vector of the firing variances defined in \eqref{def:vector:variances}.
\end{proposition}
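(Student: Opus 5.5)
We need to prove Proposition \ref{prop:cov}: $\Sigma^{(0)}_G$ is the unique solution of $\Sigma = d_0(A^{\top}\Sigma A)+{\rm diag}(v_G(X_0))$.

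The plan is to compute the entries of $\Sigma^{(0)}_G$ by conditioning one step back, using stationarity, and then to obtain uniqueness from a contraction argument.

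\emph{Diagonal entries.} Since $X_{i,0}\in\{0,1\}$, we have $\mathbb{E}_G[X_{i,0}^2]=\mathbb{E}_G[X_{i,0}]$. Hence
\[
\Sigma^{(0)}_{G,ii}=m_i(1-m_i)=v_G(X_0)_i,
\]
with $m=\mathbb{E}_G[X_0]$. The diagonal of $d_0(\cdot)$ vanishes, so the diagonal of the equation holds trivially.

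\emph{Off-diagonal entries.} Fix $i\neq j$ and work at time $t=1$, which is allowed by stationarity. Given $X_0$, the coordinates $X_{i,1}$ and $X_{j,1}$ are conditionally independent Bernoulli variables with parameters $\phi_{G,i}(X_0)=\lambda+(A^{\top}X_0)_i$ and $\phi_{G,j}(X_0)=\lambda+(A^{\top}X_0)_j$. Therefore
\[
\mathbb{E}_G[X_{i,1}X_{j,1}]=\mathbb{E}_G\big[(\lambda+(A^{\top}X_0)_i)(\lambda+(A^{\top}X_0)_j)\big].
\]
By Proposition \ref{prop:mean}, $m=\lambda\mathbf{1}_n+A^{\top}m$, so $m_i m_j=(\lambda+(A^{\top}m)_i)(\lambda+(A^{\top}m)_j)$. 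Subtracting the two expressions, the $\lambda^2$ terms cancel, and so do the cross terms $\lambda(A^{\top}(X_0-m))_i$, which have zero mean. What remains is
\[
\Sigma^{(0)}_{G,ij}={\rm Cov}_G\big((A^{\top}X_0)_i,(A^{\top}X_0)_j\big)=(A^{\top}\Sigma^{(0)}_G A)_{ij}.
\]
Combining the diagonal and off-diagonal entries gives \eqref{eq:matrixEquation}.

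\emph{Uniqueness.} Consider the linear map $T(\Sigma)=d_0(A^{\top}\Sigma A)$. After vectorization, $T$ acts as the matrix $L$ of Proposition \ref{prop:cov:vec:form}, since ${\rm vec}(A^{\top}\Sigma A)=(A^{\top}\otimes A^{\top}){\rm vec}(\Sigma)$ and $d_0$ zeroes exactly the rows $u=n(i-1)+i$. We have
\[
\vertiii{L}_{\infty}\le\vertiii{A^{\top}\otimes A^{\top}}_{\infty}=\vertiii{A^{\top}}_{\infty}^2<1/4
\]
under \eqref{eq:wellposed}. So $I_{n^2}-L$ is invertible, and the equation $\Sigma=T(\Sigma)+{\rm diag}(v_G(X_0))$ has exactly one solution. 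This also yields \eqref{eq:VecEquation} directly.

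The main obstacle is the off-diagonal step. The point that needs care is that conditional independence given $X_0$ is what kills the extra conditional-variance term, which survives only on the diagonal. The other point to check is that the mean identity makes the $\lambda$-dependent terms cancel exactly.
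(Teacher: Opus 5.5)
Your proof is correct and follows the paper's argument: the diagonal entries are immediate, and the off-diagonal identity comes from stationarity at time $1$ together with conditional independence given $X_0$, which reduces $\Sigma^{(0)}_{G,ij}$ to $\mathrm{Cov}_G\big((A^{\top}X_0)_i,(A^{\top}X_0)_j\big)=(A^{\top}\Sigma^{(0)}_G A)_{ij}$. The only organizational difference is that you prove uniqueness inside this proposition via $\vertiii{L}_{\infty}\le\vertiii{A^{\top}}_{\infty}^2<1$, whereas the paper places exactly that bound in the proof of Proposition~\ref{prop:cov:vec:form}.
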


\begin{proof}
To avoid clutter in the notation, we shall omit the dependence on $G$ and simply write $\Sigma^{(0)}$. Let us begin observing that for $i = j$, $\Sigma^{(0)}_{ij}=Var_{G}(X_{i,0})$ and $d_0(A^{\top} \Sigma^{(0)} A))_{ij}=0$, thus \eqref{eq:matrixEquation} is satisfied for the diagonal elements. 
Let us denote $m_i=\Ex_G[X_{i,0}]$ for $i\in [n]$.
For $i \neq j$,  using the fact that the Markov chain $\{X_t\}_{t\geq 0}$ is stationary, we have that 
\begin{equation*}
    \begin{split}
     \Sigma^{(0)}_{ij}&=    Cov_{G} \left( X_{i,1}, X_{j,1} \right)  = \mathbb{E}_G \left[ X_{i,1} X_{j,1} \right] - \mathbb{E}_G \left[ X_{i,1} \right] \mathbb{E}_G \left[ X_{j,1} \right]  = \mathbb{E}_G\left[ \mathbb{E}_G \left[ X_{i,1}, X_{j,1} | X_{0} \right] \right] - m_{i}m_{j} \\
        & \overset{(\star)}{=} \mathbb{E}_G \big[ \mathbb{E}_G [ X_{i,1} | X_{0}]  \mathbb{E}_G[ X_{j,1} | X_{0}] \big] - m_{i}m_{j} \\
        & = \mathbb{E}_G \big[ \big( \lambda + ( A^{\top} X_{0})_{i} \big) \big( \lambda + ( A^{\top} X_{0})_{j} \big) \big] - \mathbb{E}_G \big[ \big( \lambda + ( A^{\top} X_{0})_{i} \big) \big] \mathbb{E}_G \big[ \big( \lambda + ( A^{\top} X_{0})_{j} \big) \big]\,, 
        \end{split}
        \end{equation*}
where in $(\star)$ we used the conditional independence of the model. Using further the linearity of the expectation, we obtain that 
\begin{equation*}
    \begin{split}
        \Sigma^{(0)}_{ij}&=\mathbb{E}_G\big[ ( A^{\top} X_{0})_{i} ( A^{\top} X_{0})_{j} \big] - \big( A^{\top} \mathbb{E}_G \big[ X_{0} \big] \big)_{i} \big( A^{\top} \mathbb{E}_G \big[ X_{0} \big] \big)_{j} \\
        & =\sum_{h\in [n]} \sum_{\ell\in [n]} A^{\top}_{ih} \mathbb{E} \left[ X_{h,0} X_{\ell,0} \right] A^{\top}_{j\ell} - \sum_{h\in [n]} A^{\top}_{ih} \mathbb{E}_G \left[ X_{h,0} \right] \sum_{\ell\in [n]} A^{\top}_{j\ell} \mathbb{E}_G\left[ X_{\ell,0} \right] \\
        & = \sum_{h\in [n]} \sum_{\ell\in [n]} A^{\top}_{ih} \left( \mathbb{E} \left[ X_{h,0} X_{\ell,0} \right] - \mathbb{E} \left[ X_{h,0} \right] \mathbb{E} \left[ X_{\ell,0} \right] \right) A^{\top}_{j\ell}\\  
        &= \sum_{h\in [n]} \sum_{\ell\in [n]} A^{\top}_{ih} \Sigma^{(0)}_{h\ell} A^{\top}_{j\ell} 
        = \big( A^{\top} \Sigma^{(0)} A \big)_{ij}=d_0\big( A^{\top} \Sigma^{(0)} A \big)_{ij}\,,
    \end{split}
\end{equation*}
and thus \eqref{eq:matrixEquation} is satisfied also for the  off-diagonal entries. 
\end{proof}

We now prove Proposition \ref{prop:cov:vec:form}.

\begin{proof}[Proof of Proposition \ref{prop:cov:vec:form}]
To avoid clutter in the notation, we shall omit the dependence on $G$ and simply write $\Sigma^{(0)}$.
We begin pointing out that the operator ${\rm vec}(\cdot)$ is linear and invertible. Therefore, the unique solution of \eqref{eq:matrixEquation} in a vectorized form is the solution of the following equation
\begin{equation*}
  {\rm vec}\big(\Sigma^{(0)}\big)= {\rm vec}\big(d_0(A^{\top} \Sigma^{(0)} A\big)) 
  +  {\rm vec}\big({\rm diag}(v_G(X_0))\big)\,.   
\end{equation*}

\noindent
{\bf Claim.} The matrix $L$ satisfies $L{\rm vec}(\Sigma^{(0)})={\rm vec}(d_0(A^{\top}\Sigma^{(0)} A))$. 

\medskip
\noindent
{\bf Proof of the claim.} Since $L_{uv}=0$ for all $v\in[n^2]$ and $u=n(i-1)+i$ with $i\in [n]$, we have that $L{\rm vec}(\Sigma^{(0)})_u=0=d_0(A^{\top}\Sigma^{(0)} A)_{ii}={\rm vec}(d_0(A^{\top}\Sigma^{(0)} A))_u$. Now, take $u=n(j-1)+i$ with $i,j\in[n]$ such that $i\neq j$ and observe that
\begin{align*}
L{\rm vec}(\Sigma^{(0)})_u &=\sum_{v\in[n^2]}L_{uv}{\rm vec}(\Sigma^{(0)})_v=\sum_{v\in[n^2]}(A^{\top}\otimes A^{\top})_{uv}{\rm vec}(\Sigma^{(0)})_v
\\
&=\sum_{h\in[n]}\sum_{\ell\in[n]} A^{\top}_{jh}A^{\top}_{i\ell}\Sigma^{(0)}_{\ell h}=\sum_{h\in [n]}\sum_{\ell\in [n]} A^{\top}_{i\ell}\Sigma^{(0)}_{\ell h}A_{hj}\\
&=(A^{\top}\Sigma^{(0)} A)_{ij}=d_0(A^{\top}\Sigma^{(0)} A)_{ij}={\rm vec}(d_0(A^{\top}\Sigma^{(0)} A))_u\,,
\end{align*}
concluding the proof of the claim.

Using the claim, the proof then simply follows from observing that $\big(I_{n^2}-L\big)$ is invertible. This can be seen by observing that  
\begin{align*}
\vertiii{L}_{\infty}=\max_{s} \sum_{u\in [n^2]} |L_{su}|=\max_{i \neq j} \sum_{h \in [n]}\sum_{\ell \in [n]} |A^\top_{ih}| |A^{\top}_{j\ell}|\leq \vertiii{A^\top}_{\infty}^2\,.  
\end{align*}
The above bound and the assumption that $A$ satisfies \eqref{eq:wellposed} implies that $\vertiii{L}_{\infty}<1$ which in turn assures that  $\big(I_{n^2}-L\big)$ is invertible. 
\end{proof}

Finally, we prove Proposition \ref{prop:t-laggedCov}.

\begin{proof}[Proof of Proposition \ref{prop:t-laggedCov}]
Let $m_i=\Ex_G[X_{i,0}]$ for $i\in[n]$. We first show that, for every $t\geq 1$,
\begin{equation}\label{eq:lag:recursion}
\Sigma^{(t)}_G=A^{\top}\Sigma^{(t-1)}_G\,.
\end{equation}

Fix $t\geq 1$ and $i,j\in[n]$. Using \eqref{eq:cond_ind} together with the Markov property of $\{X_t\}_{t\geq 0}$, we obtain
\begin{align*}
\mathbb{E}_G\big[X_{i,t}X_{j,0}\big]
&=\mathbb{E}_G\Big[X_{j,0}\,\mathbb{E}_G\big[X_{i,t}\mid X_0,\ldots,X_{t-1}\big]\Big]\\
&=\mathbb{E}_G\Big[X_{j,0}\,\mathbb{E}_G\big[X_{i,t}\mid X_{t-1}\big]\Big]=\mathbb{E}_G\big[X_{j,0}\big(\lambda+(A^{\top}X_{t-1})_i\big)\big]\,,
\end{align*}
where in the second equality we used $\mathbb{E}_G[X_{i,t}\mid X_{t-1}]=\phi_{G,i}(X_{t-1})=\lambda+(A^{\top}X_{t-1})_i$.
Expanding $(A^{\top}X_{t-1})_i=\sum_{\ell\in[n]}A^{\top}_{i\ell}X_{\ell,t-1}$ and using the
linearity of the expectation, it follows that
\begin{equation}
\label{eq1:proof:lagged:covariances}
\mathbb{E}_G\big[X_{i,t}X_{j,0}\big]=\lambda m_j+\sum_{\ell\in[n]}A^{\top}_{i\ell}\,\mathbb{E}_G\big[X_{\ell,t-1}X_{j,0}\big]\,.
\end{equation}
On the other hand, by stationarity $\mathbb{E}_G[X_{i,t}]=m_i$ for all $t\geq 0$, and conditioning on $X_{t-1}$ similarly
as above yields $m_i=\mathbb{E}_G[X_{i,t}]=\lambda+(A^{\top}m)_i=\lambda+\sum_{\ell\in[n]}A^{\top}_{i\ell}m_\ell$.
Hence, subtracting $m_i,m_j$ from both sides of \eqref{eq1:proof:lagged:covariances}, we deduce that 
\begin{align*}
\big(\Sigma^{(t)}_G\big)_{ij}
&=\mathbb{E}_G\big[X_{i,t}X_{j,0}\big]-m_im_j\\
&=\sum_{\ell\in[n]}A^{\top}_{i\ell}\Big(\mathbb{E}_G\big[X_{\ell,t-1}X_{j,0}\big]-m_\ell m_j\Big)=\sum_{\ell\in[n]}A^{\top}_{i\ell}\big(\Sigma^{(t-1)}_G\big)_{\ell j}=(A^{\top}\Sigma^{(t)}_G)_{ij}\,.
\end{align*}
Since $i,j\in[n]$ are
arbitrary, this proves \eqref{eq:lag:recursion}.
Finally, the result follows by iterating \eqref{eq:lag:recursion}.
\end{proof}

\section{A remark on Stein equations}

The goal of this section is to prove the result below. 
For each square matrix $B$, we write $\rho(B)$ to indicate its spectral radius.

\begin{lemma}\label{lem:Psi:well:defined}
Let $W$ be the square matrix defined in \eqref{def:matrix_G}. If $\rho(W)<1$, then the matrix $\Psi$ defined in \eqref{def:Psi} is well-defined and is the unique solution of Stein equation \eqref{def:stein:equation}.

\end{lemma}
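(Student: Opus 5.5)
The proof splits into two parts: first show that the series \eqref{def:Psi} converges, then verify the Stein equation and prove uniqueness through the vectorized (Kronecker) form, exactly as in the proof of Proposition \ref{prop:gen:Psi}.

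\emph{Convergence.} Write $D:={\rm diag}(\xi\odot\alpha^{-1})$. Since $\rho(W)<1$, pick any $r$ with $\rho(W)<r<1$. By Gelfand's formula (equivalently, by the existence of an induced matrix norm $\|\cdot\|_r$ with $\|W\|_r<r$, as in \cite[Lemma 5.6.10]{Horn1991:topics}), there is a constant $c_r$ with $\|W^u\|\le c_r r^u$ for all $u\ge0$, in any fixed norm. Since $\|(W^\top)^u\|=\|(W^u)^\top\|$ is bounded in the same way in the spectral norm, we get
\[
\|W^uD(W^\top)^u\|_2\le c_r^2\|D\|_2\, r^{2u}.
\]
The series therefore converges absolutely, so $\Psi$ is well defined. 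This bound on powers of $W$ is the only point needing care, because here we assume only $\rho(W)<1$ rather than $\vertiii{W}_\infty<1$; Gelfand's formula handles it.

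\emph{Stein equation.} Compute directly:
\[
W\Psi W^\top=\sum_{u\ge1}W^{u+1}D(W^\top)^{u+1}=\Psi-WDW^\top .
\]
Termwise manipulation is justified by absolute convergence. Hence $\Psi$ solves \eqref{def:stein:equation}.

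\emph{Uniqueness.} Suppose $\Psi'$ is another solution and set $\Delta=\Psi-\Psi'$, so that $\Delta=W\Delta W^\top$. Vectorizing with ${\rm vec}(BDC)=(C^\top\otimes B){\rm vec}(D)$ gives $(I_{k^2}-W\otimes W){\rm vec}(\Delta)=0$. The eigenvalues of $W\otimes W$ are the products $\lambda_i\lambda_j$ of eigenvalues of $W$, so $\rho(W\otimes W)=\rho(W)^2<1$. Thus $I_{k^2}-W\otimes W$ is invertible, which forces $\Delta=0$.

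Alternatively, iterating $\Delta=W^u\Delta(W^\top)^u$ and letting $u\to\infty$ with the power bound above gives $\Delta=0$ without any spectral facts about Kronecker products.
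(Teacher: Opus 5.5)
Your proof is correct. Uniqueness is handled as in the paper, but you obtain existence differently.

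The paper works entirely in vectorized form. It writes the Stein equation as $(I_{k^2}-W\otimes W)\,{\rm vec}(\Psi)={\rm vec}(C)$ with $C=W\,{\rm diag}(\xi\odot\alpha^{-1})\,W^{\top}$. It then uses $\rho(W\otimes W)=\rho(W)^2<1$ to invert $I_{k^2}-W\otimes W$ by its Neumann series, and un-vectorizes $\sum_{u\geq 0}(W^u\otimes W^u)\,{\rm vec}(C)$ to recover \eqref{def:Psi}. Convergence of the series, the fact that it solves the equation, and uniqueness all come from that single inversion.

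You instead prove convergence directly from the geometric decay $\|W^u\|\le c_r r^u$. You check the equation by the index shift $W\Psi W^{\top}=\Psi-C$, and use the Kronecker spectral fact only for uniqueness.

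The paper's version is shorter and delivers the series as the formula for the inverse. However, it rests on two cited facts: the eigenvalues of Kronecker products, and convergence of the Neumann series under a spectral-radius hypothesis, which is itself a Gelfand-type statement. Your version isolates the one analytic input, namely that $\rho(W)<1$ forces geometric decay of the powers of $W$. With your alternative uniqueness argument via $\Delta=W^u\Delta(W^{\top})^u\to 0$, the whole lemma needs nothing beyond Gelfand's formula.

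One bibliographic point: the renorming lemma you quote is Lemma 5.6.10 of Horn and Johnson's \emph{Matrix Analysis}, not of the \emph{Topics} volume cited as \cite{Horn1991:topics}.
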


\begin{proof}
Denote $C=W\,{\rm diag}\big(\xi\odot \alpha^{-1}\big)\,W^{\top}$ and observe that using the vectorization the Stein equation \eqref{def:stein:equation} can be rewritten as $(I_{k^2}-W\otimes W){\rm vec}(\Psi)={\rm vec}(C)$. 
Now, since $\rho(W)<1$ and the
eigenvalues of $W\otimes W$ are the products $\lambda_i\lambda_j$ of the eigenvalues of
$W$, we have $\rho(W\otimes W)=\rho(W)^2<1$ \cite[Theorem 4.2.12]{Horn1991:topics}. As a consequence, 
$(I_{k^2}-W\otimes W)$ is invertible
and its inverse is $\mathcal{G}(W\otimes W)=\sum_{u\ge 0}(W\otimes W)^{u}$.  
Moreover, since $(I_{k^2}-W\otimes W)$ is invertible, the equation $(I_{k^2}-W\otimes W){\rm vec}(\Psi)={\rm vec}(C)$ has a unique solution ${\rm vec}(\Psi)=\mathcal{G}(W\otimes W){\rm vec}(C)$. Finally, using that $(W\otimes W)^u=W^u\otimes W^u$ and $W^u\otimes W^u {\rm vec}(C)={\rm vec}(W^u C (W^{\top})^u)$ for all $u\geq 0$, we then conclude that
\[
{\rm vec}(\Psi)=\sum_{u\geq 0}{\rm vec}(W^u C (W^{\top})^u)={\rm vec}\Big(\sum_{u\geq 0}W^u C (W^{\top})^u\Big),
\]
so that we must have $\Psi=\sum_{u\geq 0}W^u C (W^{\top})^u=\sum_{u=1}^{\infty}W^{u}\,{\rm diag}\big(\xi\odot \alpha^{-1}\big)\,\big(W^{\top}\big)^{u}$, concluding the proof.
\end{proof}

\bibliographystyle{ACM-Reference-Format}
\bibliography{refs.bib}

\end{document}